\renewcommand{\mathbb}{\mathds}
\newcommand{\R}{\mathbb R}
\newcommand{\T}{\mathbb T}
\newcommand{\X}{\mathcal X}
\newcommand{\x}{\bm{x}}
\newcommand{\vect}[1]{\bm{#1}}
\newcommand{\dd}{\mathop{}\!\mathrm{d}}
\DeclareMathOperator{\Law}{\mathcal{L}}
\DeclareMathOperator{\Expect}{\mathbb{E}}
\DeclareMathOperator{\Var}{Var}
\DeclareMathOperator{\Cov}{Cov}
\DeclareMathOperator{\Osc}{Osc}
\DeclareMathOperator{\Image}{Im}
\newcommand{\1}{\mathbb{1}}
\newcommand{\adjustbin}{\negmedspace{}}
\newcommand{\Jc}{J_\textnormal{c}}
\newcommand{\hc}{h_\textnormal{c}}
\newcommand{\qd}{\textnormal{q}}
\newcommand{\bd}{\textnormal{b}}
\newcommand{\permsum}{\mathop{\overline{\sum\limits_{\sigma}}}}
\newcommand{\ind}{\textnormal{ind}}
\newcommand{\id}{\mathbf{1}}
\newcommand{\TV}{\textnormal{TV}}
\newcommand{\neu}{\textnormal{n}}
\newcommand{\tran}{\textnormal{T}}
\newcommand{\lambdaPN}{\lambda^{\textnormal{P}}_{N}}
\patchcmd{\@IEEEeqnarray}{\relax}{\relax\intertext@}{}{}
\newtheorem{thm}{Theorem}
\newtheorem*{thm*}{Theorem}
\newtheorem{lem}[thm]{Lemma}
\newtheorem{cor}[thm]{Corollary}
\newtheorem{prop}[thm]{Proposition}
\newtheorem*{prop*}{Proposition}
\theoremstyle{definition}
\newtheorem*{defn*}{Definition}
\newtheorem*{assu*}{Assumption}
\theoremstyle{remark}
\newtheorem{rem}{Remark}
\newenvironment{textassu}[1]
  {\innertextassu}
  {\endinnertextassu}
\title{Large-scale concentration and relaxation\\
for mean-field Langevin particle systems}
\author{Songbo Wang}
\date{}
\subjclass{Primary 60K35; Secondary 60J60, 82C31}
\keywords{Langevin dynamics, mean-field interaction,
modulated free energy, Talagrand inequality, logarithmic Sobolev inequality}
\begin{document}

\maketitle

\begin{abstract}
We study the Langevin dynamics of diffusive particles
with regular pairwise interactions under mean-field scaling.
By approximating empirical distributions with conditional distributions,
we establish coercive and contractive properties
for the modulated free energy functional.
These properties yield near-optimal large-scale concentration
and relaxation rates for the particle system throughout the subcritical regime.
Furthermore, we derive generation of chaos estimates with the optimal order
of particle approximation.
As a simpler instance, we demonstrate long-time convergence
of the independent projection of Langevin dynamics.
\end{abstract}

\tableofcontents

\section{Introduction}
\label{sec:intro}

This paper investigates the quantitative long-time convergence
of the following mean-field interacting particle system:
\begin{equation}
\label{eq:ps-sde}
\dd X^i_t = - \nabla V(X^i_t) \dd t
- \frac 1N \sum_{j \in [N]} \nabla_1 W(X^i_t, X^j_t) \dd t
+ \sqrt 2 \dd B^i_t,\qquad i \in [N].
\end{equation}
Here, each particle $X^i_t$ takes values in $\X$, with $\X = \T^d$
or $\R^d$; the potentials $V \colon \X \to \R$ and $W \colon \X \times \X \to
\R$ are regular, with $W$ additionally symmetric;
and $B^i$ are independent standard Brownian motions in $d$ dimensions.
The Langevin process~\eqref{eq:ps-sde} is a classical model for
relaxation dynamics of physical systems with mean-field interactions.
More recently, it has also been used to
model opinion dynamics \cite{GPYConsensus}
and gradient descent algorithms in machine learning \cite{RVETrainability}.
Allowing singularities in $W$, though not considered in this paper,
further relates the system
to random matrix theory \cite{SerfatyCoulombGinzburgLandau}
and biological models of chemotaxis \cite{KellerSegelChemotaxis}.
In all these models, the number of particles $N$ may be so large
that classical ergodic theorems fail to yield satisfactory
long-time convergence rate.
Addressing this high-dimensional regime is precisely the aim of this work.

As $N$ tends to infinity,
the mean-field scaling factor $1/N$ in \eqref{eq:ps-sde} vanishes,
suggesting that particles become independent in this limit.
This leads to the following single-particle self-consistent dynamics,
also known as the \emph{mean-field limit}:
\begin{equation}
\label{eq:mf-sde}
\dd X_t = \nabla V(X_t) \dd t
- \langle \nabla_1 W(X_t,\cdot), \Law(X_t) \rangle \dd t
+ \sqrt 2 \dd B_t.
\end{equation}
Here, $\langle \cdot, \cdot\rangle$ denotes the integration between
a function and a measure,
and $\Law(X_t)$ denotes the law of the random variable $X_t$.
The convergence of \eqref{eq:ps-sde} towards \eqref{eq:mf-sde}
is also termed \emph{propagation of chaos},
a topic of significant interest since its formulation
by \textcite{KacFoundations}.
For precise notions of this convergence and methods to establish it,
see the review articles \cite{ChaintronDiezPOC1,ChaintronDiezPOC2}.

Studying \eqref{eq:mf-sde} instead of \eqref{eq:ps-sde} circumvents
high-dimensional difficulties, but the accuracy of approximating
the $N$-particle system by its mean-field limit relies on
quantitative estimates of the distance between the two stochastic processes.
The propagation of chaos estimate established by \textcite{SznitmanPOC}
indicates that this distance typically grows exponentially in time.
Consequently, reducing the long-time convergence analysis of
\eqref{eq:ps-sde} to that of its mean-field limit \eqref{eq:mf-sde}
requires understanding additional structure in these dynamics.

In our context, the additional structure is geometric.
Denote $m^N_t = \Law(\vect X_t) = \Law(X^1_t,\ldots,X^N_t)$
and $m_t = \Law(X_t)$.
The marginal distributions $m^N_t$ and $m_t$ solve
the Fokker--Planck equations
\begin{align}
\label{eq:ps-fp}
\partial_t m^N_t
&= \sum_{i \in [N]} \Delta_{x^i} m^N_t +
\nabla_{x^i} \cdot \biggl(
\Bigl( \nabla V(x^i) + \frac 1N \sum_{j \in [N]} \nabla_1 W(x^i, x^j) \Bigr)
m^N_t
\biggr), \\
\label{eq:mf-fp}
\partial_t m_t
&= \Delta m_t + \nabla \cdot \biggl(
\Bigl(\nabla V + \int_{\X} \nabla_1 W(\cdot, y)\, m_t(\dd y)\Bigr)
m_t \biggr).
\end{align}
\textcite{OttoGeometryPorous} observed that the two evolutions
$t \mapsto m^N_t$ and $t \mapsto m_t$
are in fact gradient flows in the Wasserstein geometry.
More specifically, under a formal Riemannian metric tensor induced
by optimal transport, the generators of these dynamics are
the negative gradients of the \emph{free energy} functionals
\begin{align*}
\mathcal F^N(\nu^N)
&= H(\nu^N) + N \Expect_{\vect X \sim \nu^N}
\biggl[\langle V, \mu_{\vect X}\rangle
+ \frac 12 \langle W, \mu_{\vect X}^{\otimes 2}\rangle\biggr], \\
\mathcal F(\nu)
&= H(\nu) + \langle V, \nu\rangle + \frac 12 \langle W, \nu^{\otimes 2}\rangle,
\end{align*}
defined for probabilities $\nu^N$ and $\nu$ on $\X^N$ and $\X$, respectively.
Here, $H(\nu^N)$ and $H(\nu)$ represent the entropy of the respective
particle distributions:
\[
H(\nu^N) = \int_{\X^N} \nu^N(\vect x) \log \nu^N(\vect x) \dd \vect x,
\quad
H(\nu) = \int_{\X} \nu(x) \log \nu(x) \dd x,
\]
where we identified the probability measures $\nu^N$ and $\nu$
with their density functions;
the terms $\langle V,\cdot\rangle + \frac 12\langle W,\cdot^{\otimes 2}\rangle$
encode the respective potential energies;
and $\mu_{\vect X}$ denotes the empirical law
for the particle configuration $\vect X = (X^1,\ldots,X^N)$:
\[
\mu_{\vect X} = \frac 1N \sum_{i \in [N]} \delta_{X^i}.
\]

This geometric perspective naturally identifies the free energies
as Lyapunov functionals for the gradient flows they generate.
Differentiating the free energy
functionals along the dynamics yields the \emph{Fisher informations}:
\begin{IEEEeqnarray*}{rCl}
\frac{\dd \mathcal F^N(m^N_t)}{\dd t}
&=& - \sum_{i \in [N]} \int_{\X^N}
{ \lvert \nabla_i \log m^N_t(\vect x)
+ \nabla V(x^i) + \langle \nabla_1 W(x^i, \cdot), \mu_{\vect x}\rangle
\rvert^2\, m^N_t(\dd\vect x)} \\
&\eqqcolon& - \sum_{i \in [N]} \int_{\X^N}
{ \biggl\lvert \nabla_i \log \frac{m^N_t}{m^N_*} (\vect x) \biggr\rvert^2\,
m^N_t(\dd\vect x)}
= - I(m^N_t | m^N_*), \\
\frac{\dd \mathcal F(m_t)}{\dd t}
&=& - \int_{\X} { \lvert \nabla \log m_t(x) + \nabla V(x)
+ \langle \nabla_1 W(x,\cdot), m_t \rangle \rvert^2\,m_t(\dd x) } \\
&\eqqcolon& - \int_{\X} { \biggl\lvert \nabla \log \frac{m_t}{\Pi[m_t]} (x)
\biggr\rvert^2\, m_t(\dd x) }
= - I(m_t | \Pi[m_t]),
\end{IEEEeqnarray*}
where $m^N_*$ and $\Pi[\nu]$ are probabilities defined on $\X^N$ and $\X$,
respectively, by
\begin{align}
m^N_*(\dd \vect x) &\propto
\exp \biggl( - N \langle V, \mu_{\vect x} \rangle - \frac{N}{2}
\langle W, \mu_{\vect x}^{\otimes 2}\rangle \biggr) \dd \vect x, \notag \\
\Pi[\nu](\dd x) &\propto
\exp \bigl( - V(x) - \langle W(x,\cdot), \nu\rangle \bigr) \dd x,
\label{eq:def-Pi}
\end{align}
with proportionality constants ensuring normalization.
We term the images of $\Pi$ \emph{local equilibrium} measures:
analogous to kinetic theory, the free energy dissipation indicates that
$m_t$ should converge rapidly towards $\Pi[m_t]$, while the convergence
of the latter towards global equilibrium remains more challenging.
Nevertheless, it follows, at least formally, that
$m^N_*$ is the unique invariant measure for~\eqref{eq:ps-fp}
and that any invariant measure $m_*$ for~\eqref{eq:mf-fp}, if it exists,
must satisfy the fixed-point relation
\begin{equation}
\label{eq:Pi-self-consistency}
m_* = \Pi[m_*].
\end{equation}
This relation corresponds to the \emph{self-consistency} equation
for spin models.
Analyzing the relationship between free energies and Fisher informations
provides therefore a way of understanding the long-time convergence
of these diffusive evolutions.

One research direction aims to establish a linear dominance of the Fisher
information over the free energy. Specifically, the goal is to prove that,
for all probability measures $\nu^N$ on $\X^N$,
\begin{equation}
\label{eq:ps-lsi}
I(\nu^N | m^N_*) \geqslant 2 \lambda_N H(\nu^N | m^N_*),
\end{equation}
where $\lambda_N > 0$ and $H$ is the \emph{relative entropy}
functional defined by
\[
H(\nu^N | m^N_*) = \int_{\X^N}
\log \frac{\nu^N(\vect x)}{m^N_*(\vect x)} \nu^N(\dd \vect x).
\]
This is referred to as a $\lambda_N$-logarithmic Sobolev (log-Sobolev)
inequality for $m^N_*$.
Since the free energy differs from the relative entropy only
by an additive constant:
\[
\mathcal F^N(\nu^N) - \mathcal F^N(m^N_*) = H(\nu^N | m^N_*),
\]
the log-Sobolev inequality yields directly the exponential entropy contraction
for the $N$-particle dynamics \eqref{eq:ps-fp}:
\[
\mathcal F^N(m^N_t) - \mathcal F^N(m^N_*)
= H(m^N_t | m^N_*) \leqslant e^{-2\lambda_N t} H(m^N_0 | m^N_*).
\]
The main challenge is to prove the log-Sobolev inequality \eqref{eq:ps-lsi}
with a constant $\lambda_N$ that is uniform in $N$,
as standard approaches typically suffer from the \emph{curse of dimensionality}
and yield $\lambda_N$ that decays exponentially with $N$.

Convexity is a prototypical scenario in which the curse of dimensionality can be
circumvented, as is well known in optimization theory. For convex potentials $V$
and $W$ satisfying the lower bound $\nabla^2 V \succcurlyeq \lambda > 0$,
\textcite{MalrieuLSI} and \textcite{CMVKinetic} demonstrated that $m^N_*$
satisfies a $\lambda$-log-Sobolev inequality, and that analogous entropic
contraction holds for the mean-field limit \eqref{eq:mf-fp}. This scenario is
commonly termed the \emph{displacement-convex} case, since
the convexity of potentials ensures that along optimal
transport trajectories, the free energy functionals $\mathcal F^N$
and $\mathcal F$ evolve convexly.
This perspective naturally generalizes finite-dimensional gradient flows
with convex objective functions to the Wasserstein setting.

Despite its elegance, the displacement convexity framework does not encompass
physical spin models, where the confinement potentials typically exhibit strong
concavity. \textcite{BauerschmidtBodineauSimple} address this limitation by
considering the following scale decomposition in $m^N_*$:
\[
m^N_* = \Expect_{h} [ (T_h m_*)^{\otimes N} ],
\]
where $h$ is a random variable dual to the collective spin orientation,
and $T_h m_*$ describes microscopic fluctuations of a single spin
conditioned on $h$.
The tensorized structure of $(T_h m_*)^{\otimes N}$ reduces the analysis
of the log-Sobolev inequality for $m^N_*$ to that
of the \emph{renormalized potential} for the macroscopic variable $h$,
determined by integrating out the microscopic interactions.
This approach yields a concise proof of the log-Sobolev inequality
for $\mathrm{O}(n)$ models, which, in the mean-field case,
remains valid up to the critical point.
For interactions exhibiting geometric structure,
gradually decomposing scales in accordance with this structure
leads to a continuous flow of renormalized potentials,
corresponding to the \emph{exact renormalization group} of Lagrangians
introduced by \textcite{PolchinskiRenormalization}.
Leveraging this decomposition, Bodineau, Bauerschmidt and Dagallier
developed a multi-scale Bakry--Émery criterion
that enables proofs of log-Sobolev inequalities
for various near-critical Euclidean quantum field theories
\cite{BauerschmidtBodineauSineGordon,BauerschmidtDagallierPhi4}
and spin models \cite{BauerschmidtDagallierIsing};
see their review \cite{BBDPolchinski}
for a systematic exposition of this methodology.
A recent work \cite{BBDCriterionFreeEnergy}
revisits the single-scale mean-field setting \eqref{eq:ps-fp}
and establishes a Polyak--\L ojasiewicz criterion on the renormalized potential,
in contrast to the earlier Bakry--Émery approach,
for the uniform log-Sobolev inequality for $m^N_*$.
\textcite{ChenEldanLocalizationSchemes} focus on the evolution of
microscopic fluctuations rather than renormalized potentials, and view
this scale decomposition as a \emph{localizing} stochastic process.
This approach has been instrumental in addressing the
Kannan--Lovász--Simonovits conjecture \cite{EldanThinShell,ChenAlmostKLS}.

While the preceding approaches act directly on the $N$-particle level,
the log-Sobolev inequality \eqref{eq:ps-lsi} can also
be approached through the following inequality on the mean-field limit:
\begin{equation}
\label{eq:mf-pl}
I(\nu | \Pi[\nu])
\geqslant 2\lambda \mathcal F(\nu | m_*)
\coloneqq 2\lambda \bigl( \mathcal F(\nu) - \mathcal F(m_*) \bigr).
\end{equation}
This inequality serves as the counterpart to \eqref{eq:ps-lsi},
as it yields the free energy contraction along the limit flow \eqref{eq:mf-fp}:
\[
\mathcal F(m_t | m_*) \leqslant e^{-2\lambda t}\mathcal F(m_0 | m_*).
\]
Indeed, \textcite{DGPSPhase} showed that under general assumptions,
the $N$-particle contraction rate $\lambda_N$ remains asymptotically
upper bounded by the optimal limit contraction rate $\lambda$.
This readily provides a criterion for the absence of uniform log-Sobolev
inequality for $m^N_*$ in cases where \eqref{eq:mf-pl} fails for all $\lambda$,
as typically occurs in the supercritical regime of models with phase transition.
The converse direction, establishing a lower bound on the $N$-particle
contraction rate $\lambda_N$ using $\lambda$ from \eqref{eq:mf-pl},
proves considerably more challenging.
Nevertheless, the compactness arguments in \cite{DGPSPhase} showed that
the \emph{large-scale} entropic convergence rate
of the $N$-particle system converges towards
$\lambda$ in \eqref{eq:mf-pl} as $N$ tends to infinity.
Such large-scale convergence is weaker than
that given by \eqref{eq:ps-lsi}, as it only addresses the
dissipation of $H(m^N_t | m^N_*)$ when this relative entropy is of order $N$,
while losing all control over the dynamics when it becomes $o(N)$
on longer time scales.
These two findings led them to conjecture that
the optimal log-Sobolev constants $\lambda_N$ for \eqref{eq:ps-lsi}
converge to the optimal constant $\lambda$ for \eqref{eq:mf-pl}
as $N \to \infty$.

An earlier collaboration with F.~Chen and Ren \cite{ulpoc}
\emph{quantitatively} pursued the investigation of large-scale relaxation.
The analysis focused on \emph{flat-convex} potentials $W$,
a class where the limit contraction \eqref{eq:mf-pl}
was independently proven by \textcite{NWSConvexMFL}
and \textcite{ChizatMFL},
and established the functional inequality
\begin{equation}
\label{eq:ps-lsi-large-scale}
I(\nu^N | m^N_*) \geqslant
2\lambda_N \mathcal F^N(\nu^N | m_*) - \Delta_I
\coloneqq 2\lambda_N \bigl( \mathcal F^N(\nu^N) - N\mathcal F(m_*) \bigr)
- \Delta_I.
\end{equation}
Here, $\mathcal F^N(\cdot|\cdot)$ denotes the \emph{modulated free energy}
of \textcite{BJWPKSCompteRendu,BJWMFE,BJWAttractive},
which quantifies the free-energetic deviation of $\nu^N$ from $m_*^{\otimes N}$;
the constants $\lambda_N$ and $\Delta_I$ are explicit with
$\liminf_{N\to\infty}\lambda_N > 0$ and $\Delta_I = O(1)$.
Our result~\eqref{eq:ps-lsi-large-scale} provides non-asymptotic
constants for the \emph{regularized} log-Sobolev inequality in~\cite{DGPSPhase}
and additionally yields entropy contraction up to $O(1)$:
\begin{equation}
\label{eq:ps-entropy-coarse-grained}
H(m^N_t | m_*^{\otimes N})
\leqslant \mathcal F^N(m^N_t | m_*)
\leqslant e^{-2\lambda_N t} \mathcal F^N(m^N_0 | m_*)
+ \frac{\Delta_I}{2\lambda_N}.
\end{equation}
A subsequent work by the author \cite{nulsi} further identified
\eqref{eq:ps-lsi-large-scale} as a \emph{defective}
log-Sobolev inequality for $m^N_*$.
When combined with a Poincaré inequality, this yields
the log-Sobolev inequality \eqref{eq:ps-lsi} with a different constant.

The present study establishes the large-scale contractivity
\eqref{eq:ps-lsi-large-scale} using only the limit
contractivity~\eqref{eq:mf-pl} rather than the flat convexity of $W$.
Directly applying the proof strategy of~\cite{ulpoc} fails,
as the key step in that argument invokes the flat convexity condition
for empirical measures, specifically,
\[
\langle W, (\mu_{\vect X} - m_*)^{\otimes 2}\rangle \geqslant 0.
\]
The analogous approach under the condition~\eqref{eq:mf-pl}
would require evaluating it at $\nu = \mu_{\vect X}$,
which is meaningless,
since both entropy and Fisher information diverge for atomic measures.
The difficulty appears fundamental: the Dean--Kawasaki dynamics governing
the evolution of $\mu_{\vect X}$ along \eqref{eq:ps-sde} admits
only atomic solutions, as shown by \textcite{KLvRDeanKawasaki}.

We overcome this difficulty through the \emph{conditional approximation}
\[
\bar \nu = \frac 1N \sum_{k \in [N]} \nu_k,
\]
where $\nu_k$ is the conditional law of $X^k$ given
$\vect X^{[k-1]} = (X^1,\ldots,X^{k-1})$, that is,
$\nu_k = \Law(X^k | \vect X^{[k-1]})$.
By construction, $\bar\nu$ closely approximates $\mu_{\vect X}$.
Indeed, their difference decomposes into a sum of $N$ martingale increments:
\[
\mu_{\vect X} - \bar\nu
= \frac 1N \sum_{k \in [N]}
\delta_{X^k} - \nu_k
= \frac 1N \sum_{k \in [N]}
\delta_{X^k} - \Expect[ \delta_{X^k} | \vect X^{[k-1]} ],
\]
where each increment is $O(1/N)$.
By orthogonality of increments,
$\mu_{\vect X} - \bar\nu$ is typically $O(1/\sqrt N)$.
Moreover, entropy's chain rule ensures that each $\nu_k$
has finite entropy almost surely.
Therefore \eqref{eq:mf-pl} remains meaningful
when evaluated at $\nu = \nu_k$.

Our proof strategy thus consists of alternating between $\mu_{\vect X}$ and
$\bar\nu$ and applying the limit contractivity~\eqref{eq:mf-pl} to
conditional laws, through which we ultimately establish the large-scale
functional inequality~\eqref{eq:ps-lsi-large-scale} for symmetric
$\nu^N$, with $\lambda_N$ arbitrarily close to $\lambda$
in~\eqref{eq:mf-pl} and $\Delta_I = O(1)$.
A non-symmetric version with different constants is also established
under a free energy condition stronger than \eqref{eq:mf-pl}.
These large-scale log-Sobolev inequalities constitute
a main contribution of this paper and are referred to
as the \emph{contractivity} of the modulated free energy
along Langevin dynamics.

Beyond contractivity, the conditional approximation technique
also enables proof of the entropic \emph{coercivity}
of the modulated free energy
\begin{align*}
\mathcal F^N(\nu^N | m_*) &\geqslant \delta_N H(\nu^N | m_*^{\otimes N})
- \Delta_{\mathcal F}
\intertext{by leveraging its counterpart in the mean-field limit}
\mathcal F(\nu | m_*) &\geqslant \delta H(\nu | m_*).
\end{align*}
This constitutes another principal result of this work.
Combined with contractivity, coercivity provides an entropy upper bound
analogous to~\eqref{eq:ps-entropy-coarse-grained} for
the dynamics~\eqref{eq:ps-fp}.
Coercivity alone also yields Gaussian concentration for $m^N_*$
and a quantitative large deviation upper bound of the Jabin--Z.~Wang type
\cite{JabinWang}.
In addition, coercivity permits studying near-critical behaviors
of a first-order phase transition, while contractivity typically fails,
as we will demonstrate for the supercritical Curie--Weiss model
with a small external field.

As an application of the contractivity result,
we analyze the dissipation of the dynamically modulated free energy
for the $N$-particle flow \eqref{eq:ps-fp},
which leads to generation of chaos estimates
with the optimal rate as $N\to\infty$.

Finally, we examine the independent projection of Langevin dynamics introduced
by \textcite{LackerIndependentProjections}. As this dynamics enforces particle
independence, analyzing its long-time behavior requires no conditional
approximation and is therefore especially straightforward.

\pagebreak[0]

\subsubsection*{Organization of paper}

The rest of the paper is organized as follows.
Section~\ref{sec:mr} presents our main results:
\begin{enumerate}[(i)]
\item coercivity of the modulated free energy;
\item contractivity of the modulated free energy along Langevin dynamics;
\item generation of chaos estimate;
\item long-time convergence of the independent projection of Langevin dynamics.
\end{enumerate}
Section~\ref{sec:exm} presents
the mean-field XY and the double-well Curie--Weiss models
to illustrate both the scope and limitations of the main results.
Section~\ref{sec:approx} introduces the conditional approximation
technique and establishes auxiliary properties.
Sections~\ref{sec:coercivity}--\ref{sec:ip} prove results~(i)--(iv),
respectively.
Appendix~\ref{app:recover-mf} collects lemmas
used to demonstrate the optimality of results~(i) and (ii).
Appendix~\ref{app:mode-decomposition} provides criteria
for verifying our main assumptions in models with mode decomposition.
Appendices~\ref{app:xy} and \ref{app:cw} establish additional properties
for the models discussed in Section~\ref{sec:exm}.

\section{Main results}
\label{sec:mr}

This section presents the main results of this paper.
Section~\ref{sec:mr-reg-fi} sets out the regularity
and the functional inequality framework for the analysis.
Sections~\ref{sec:mr-coercivity}--\ref{sec:mr-ip} details the results
corresponding to~(i)--(iv) outlined at the end of introduction.
Finally, Section~\ref{sec:mr-future-directions} discusses future directions.

\subsection{Regularity and functional inequalities}
\label{sec:mr-reg-fi}

We begin by introducing the necessary definitions
and assumptions that will serve as the foundation for our analysis.
These assumptions address the regularity properties
of the kernel $W$, as well as functional inequalities
for the equilibrium measure $m_*$ and the images of $\Pi$.

Throughout this paper, we work with kernel functions
of quadratic growth and probability measures with finite second moments.
These growth and moment conditions become void when $\X = \T^d$.
Let $U \colon \X \times \X \to \R$ be a kernel of quadratic growth
and let $\mathcal P_2(\X)$ denote the space of probability measures
on $\X$ with finite second moments.

\begin{defn*}
We say $U$ is \emph{positive}
if for all $\nu$, $\nu' \in \mathcal P_2(\X)$,
\[
\langle U, (\nu - \nu')^{\otimes 2}\rangle \geqslant 0.
\]
We say $U$ is \emph{negative} if $-U$ is positive.
\end{defn*}

\begin{defn*}
We define the \emph{double oscillation} of $U$ as
\[
\sup_{x, y, z, w \in \X}
\langle U, (\delta_x - \delta_y) \otimes (\delta_z - \delta_w) \rangle
\]
and denote it by $\Osc_2 U$.
\end{defn*}

The following two assumptions address the regularity properties
of the interaction kernel $W$ and its gradient $\nabla_1W$.

\begin{textassu}{W}
\label{assu:W}
The interaction potential admits
\[
W = W^+ - W^-,
\]
where both $W^+$ and $W^-$ are positive symmetric kernels of quadratic growth.
\begin{itemize}[itemsep=\topsepamount]
\item In the case $\X = \T^d$, the kernels $W^+$ and $W^-$ satisfy
\[
\Osc_2 W^+ + \Osc_2 W^- \leqslant 4M_W
\]
for some $M_W \geqslant 0$; by convention, $L_W = 0$.
\item In the case $\X = \R^d$, each kernel further decomposes as
\[
W^s = W_{\bd}^s + W_{\qd}^s, \quad \text{$s = +$, $-$,}
\]
where $W^s_{\bd}$, $W^s_{\qd}$ are symmetric and satisfy
\begin{align*}
\Osc_2 W_{\bd}^+
+ \Osc_2 W_{\bd}^- &\leqslant 4M_W, \\
\lVert \nabla_{1,2}^2 W_{\qd}^+ \rVert_{L^\infty}
+ \lVert \nabla_{1,2}^2 W_{\qd}^- \rVert_{L^\infty} &\leqslant L_W
\end{align*}
for some $M_W$, $L_W \geqslant 0$.
\end{itemize}
\end{textassu}

\begin{textassu}{R}
\label{assu:R}
There exists $R \colon \X \times \X \to \R$ of quadratic growth such that
for all $y \in \X$ and all measures $\nu$, $\nu' \in \mathcal P_2(\X)$,
\[
\lvert \langle \nabla_1 W(y,\cdot), \nu - \nu'\rangle \rvert^2
\leqslant \langle R, (\nu-\nu')^{\otimes 2}\rangle.
\]
\begin{itemize}[itemsep=\topsepamount]
\item In the case $\X = \T^d$, the kernel $R$ satisfies
\[
\Osc_2 R \leqslant 4M_R
\]
for some $M_R \geqslant 0$; by convention, $L_R = 0$.
\item In the case $\X = \R^d$, the kernel $R$ further decomposes as
\[
R = R_{\bd} + R_{\qd},
\]
where $R_{\bd}$, $R_{\qd}$ are symmetric and satisfy
\[
\Osc_2 R_{\bd} \leqslant 4M_R,
\quad
\lVert \nabla_{1,2}^2 R_{\qd} \rVert_{L^\infty} \leqslant L_R,
\]
for some $M_R$, $L_R \geqslant 0$.
\end{itemize}
\end{textassu}

The kernel $R$ in Assumption~\ref{assu:R} is necessarily positive.

In the following, we refer to the ``b'' kernels
in Assumptions~\ref{assu:W} and \ref{assu:R}
as bounded components,
and the ``q'' kernels as quadratic components.

\begin{rem}
\label{rem:bbd-spectral}
\textcite{BBDCriterionFreeEnergy} consider interaction potentials
admitting the following \emph{mode decomposition}:
\[
W(x,y) = \sum_{a} w_a r_a(x) r_a(y),
\]
where $w_a \in \R$ and $r_a \colon \X \to \R$.
Setting
\[
W^s(x,y) = \sum_{a} \1_{s w_a > 0} sw_a r_a(x) r_a(y),
\qquad s = +,\, -,
\]
yields a decomposition $W = W^+ - W^-$ into positive symmetric kernels.
The following conditions then suffice for
Assumptions~\ref{assu:W} and \ref{assu:R}.
\begin{itemize}[itemsep=\topsepamount]
\item In the case $\X = \T^d$, each $r_a$ has bounded oscillation
and gradient, with
$\sum_a |w_a| (\Osc r_a)^2 < \infty$
and $\sum_a |w_a| \lVert\nabla r_a\rVert_{L^\infty}^2 < \infty$.

\item In the case $\X = \R^d$, each $r_a$ decomposes as
$r_a = r_{a,\bd} + r_{a,\qd}$, where $r_{a,\bd}$ has bounded oscillation
and gradient and $r_{a,\qd}$ has bounded gradient,
with analogous summability over the modes.
\end{itemize}
This framework covers a broad class of
interactions via Fourier modes on $\T^d$ and quadratic interactions on $\R^d$,
of which the mean-field XY and double-well Curie--Weiss models
of Section~\ref{sec:exm} are single-mode instances.
\end{rem}

In many instances it is convenient to introduce the following notion,
which will be used repeatedly in the sequel.

\begin{defn*}
Let $U \colon \X \times \X \to \R$ be of quadratic growth
and let $m \in \mathcal P_2(\X)$.
We define a new kernel $U_m$ as
\[
U_m(x,y) = U(x,y) - \langle U(x,\cdot), m\rangle
- \langle U(\cdot,y), m\rangle
+ \langle U, m^{\otimes 2}\rangle.
\]
and call it the \emph{reduced} kernel associated to $m$.
\end{defn*}

The reduction operation simply recenters the associated bilinear form
$(\nu,\nu')\mapsto\langle U,\nu\otimes\nu'\rangle$ around $m$:
\begin{equation}
\label{eq:dual-cumulant}
\forall \nu, \nu' \in \mathcal P_2(\X),\qquad
\langle U, (\nu - m) \otimes (\nu' - m)\rangle
= \langle U_m, \nu \otimes \nu' \rangle.
\end{equation}
If $m$ is the mean-field equilibrium measure $m_*$,
we write simply $U_*$ for $U_{m_*}$.
Specifically, for the kernels $W_{c}^s$, $R_{c}^s$,
with $c = \bd$, $\qd$ and $s = +$, $-$,
we denote their reductions associated to $m_*$ by $W_{c,*}^s$, $R_{c,*}$,
respectively.
The reduction operation preserves both the double oscillation
and the mixed derivative:
\[
\Osc_2 U_m = \Osc_2 U,\quad
\nabla_{1,2}^2 U_m = \nabla_{1,2} U.
\]
Using this reduction and the invariance
condition~\eqref{eq:Pi-self-consistency}, we rewrite the mapping $\Pi$
and the functionals $\mathcal{F}(\cdot|m_*)$, $\mathcal{F}^N(\cdot|m_*)$
from~\eqref{eq:def-Pi}, \eqref{eq:mf-pl},
and~\eqref{eq:ps-lsi-large-scale}:
\begin{IEEEeqnarray}{rCl}
\Pi[\nu](\dd x) &\propto& \exp \bigl( - \langle W_*(x,\cdot),\nu \rangle\bigr)
\,m_*(\dd x), \label{eq:def-Pi-centered} \\
\mathcal F(\nu | m_*)
&=& H(\nu | m_*) + \frac 12 \langle W_*, \nu^{\otimes 2}\rangle,
\label{eq:def-relative-F-centered} \\
\mathcal F^N(\nu^N | m_*)
&=& H(\nu^N | m_*^{\otimes N}) + \frac N2
\Expect_{\vect X \sim \nu^N} [ \langle W_*, \mu_{\vect X}^{\otimes 2}\rangle].
\label{eq:def-modulated-F-centered}
\end{IEEEeqnarray}
These functionals can be understood as the Bregman divergence
of $\mathcal F$ and $\mathcal F^N$ with respect to $m_*$ and $m_*^{\otimes N}$
in an appropriate sense, which justifies their two-argument notation.
In the following, we work exclusively with these functionals
to simplify the analysis.

We introduce the key functional inequalities in our analysis.
Let $m \in \mathcal P_2(\X)$ and $\kappa>0$.
We say that $m$ satisfies a \emph{$\kappa$-Talagrand inequality}
if for all $\nu \in \mathcal P_2(\X)$,
\[
H(\nu | m) \geqslant \frac \kappa2 W_2^2(\nu, m);
\]
and that $m$ satisfies a \emph{$\kappa$-log-Sobolev inequality}
if for all $\nu \in \mathcal P_2(\X)$,
\[
I(\nu | m) \geqslant 2\kappa H(\nu | m).
\]
The following assumptions state the required functional inequalities
for $m_*$ and the images of $\Pi$.
Let $\rho \geqslant \rho_{\Pi} > 0$.

\begin{textassu}{T}
\label{assu:T}
The measure $m_*$ satisfies
a $\rho$-Talagrand inequality.
\end{textassu}

\begin{textassu}{LS}
\label{assu:LS}
The measure $m_*$ satisfies
a $\rho$-log-Sobolev inequality.
\end{textassu}

\begin{textassu}{LS$_\Pi$}
\label{assu:LS-Pi}
The images of $\Pi$ satisfies a $\rho_{\Pi}$-log-Sobolev inequality uniformly.
\end{textassu}

It is well known that Assumption~\ref{assu:LS} implies \ref{assu:T}.
Moreover, by \eqref{eq:Pi-self-consistency},
Assumption~\ref{assu:LS-Pi} implies \ref{assu:LS} with $\rho = \rho_{\Pi}$.
However, perturbation methods are typically used to establish
Assumption~\ref{assu:LS-Pi} and yield much weaker constants than those
in Assumption~\ref{assu:LS}; see \cite[Section~3.3]{ulpoc} and
\cite[Proposition~5]{socgibbs}.
We therefore maintain distinct notation for the constants in these assumptions.

The constant $\rho$ in Assumptions~\ref{assu:T} and \ref{assu:LS}
provides a natural spatial scale.
For convenience we introduce the dimensionless variables
\[
\ell_W = \frac{L_W}{\rho},\quad
\mu_R = \frac{M_R}{\rho},\quad
\ell_R = \frac{L_R}{\rho^2}.
\]

\begin{rem}
The assumptions in this section do not guarantee uniform long-time convergence
for the Langevin dynamics \eqref{eq:ps-fp}. However, if we further
assume $M_W + \ell_W + \mu_R + \ell_R \ll 1$, then the $N$-particle dynamics
can be viewed as a perturbation of a non-interacting dynamics,
which leads to uniform convergence.
Specifically, using the criterion of \textcite{ZegarlinskiDobrushinUniqueness},
\textcite{GLWZUPLSI} established a uniform log-Sobolev inequality
\eqref{eq:ps-lsi} for $m^N_*$ in a similar perturbative regime.
\end{rem}

\subsection{Coercivity}
\label{sec:mr-coercivity}

We establish the coercivity of the modulated free energy
as the first main result.
Our analysis relies on the following coercivity condition
for the free energy $\mathcal F(\cdot | m_*)$ of the mean-field limit:
\begin{equation}
\label{eq:mf-coercivity}
\forall \nu \in \mathcal P_2(\X),\qquad
\mathcal F(\nu | m_*)
\geqslant \delta H(\nu | m_*).
\tag{Coer}
\end{equation}
This condition can be interpreted as the positivity
of the free energy functional at the lower temperature $1-\delta$.
Our result regarding the coercivity of $\mathcal F^N(\cdot|m_*)$
is stated as follows.

\begin{thm}
\label{thm:coercivity}
Let Assumptions~\ref{assu:W} and \ref{assu:T} hold.
Let \eqref{eq:mf-coercivity} hold with $\delta \in (0,1]$.
Then for
all $\varepsilon \in (0,1]$ and all $\nu^N \in \mathcal P_2(\X)$,
\[
\mathcal F^N(\nu^N | m_*)
\geqslant \delta_N H(\nu^N | m_*^{\otimes N}) - \Delta_{\mathcal F},
\]
where $\delta_N$ and $\Delta_{\mathcal F}$ are given by
\begin{align*}
\delta_N &= (1-\varepsilon) \delta
- \biggl(1+\frac{M_W+\ell_W}{\varepsilon\delta}\biggr)\frac{2\ell_W}{N}, \\
\Delta_{\mathcal F} &=
2\biggl(1+\frac{M_W+\ell_W}{\varepsilon\delta}\biggr)
(M_W + \ell_W d).
\end{align*}
\end{thm}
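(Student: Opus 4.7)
The strategy is to apply the scalar coercivity~\eqref{eq:mf-coercivity} to the conditional approximation $\bar\nu = N^{-1}\sum_{k=1}^N \nu_k$, where $\nu_k = \Law(X^k | \vect X^{[k-1]})$, rather than to the empirical measure $\mu_{\vect X}$ on which the relative entropy is infinite, and to control the discrepancy $\mu_{\vect X} - \bar\nu$ through its martingale structure. The entropic chain rule $H(\nu^N | m_*^{\otimes N}) = \sum_k \Expect[H(\nu_k | m_*)]$ combined with Jensen's inequality yields $N\.\Expect[H(\bar\nu|m_*)] \leqslant H(\nu^N | m_*^{\otimes N})$, which pairs with the mean-field rearrangement $\tfrac12\langle W_*, \bar\nu^{\otimes 2}\rangle \geqslant -(1-\delta) H(\bar\nu|m_*)$ to give
\[
\tfrac{N}{2}\Expect\langle W_*, \bar\nu^{\otimes 2}\rangle
\geqslant -(1-\delta)\, H(\nu^N | m_*^{\otimes N}).
\]

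To reach $\mu_{\vect X}$, I will expand
\[
\langle W_*, \mu_{\vect X}^{\otimes 2}\rangle
= \langle W_*, \bar\nu^{\otimes 2}\rangle
+ 2\langle W_*, (\mu_{\vect X} - \bar\nu)\otimes \bar\nu\rangle
+ \langle W_*, (\mu_{\vect X} - \bar\nu)^{\otimes 2}\rangle
\]
and treat each piece after the splitting $W_* = W_*^+ - W_*^-$ from Assumption~\ref{assu:W}. Because the reduction identity $\langle W_{c,*}^s, \cdot \otimes m_*\rangle = 0$ combined with the positivity of $W^s$ turns each $W_*^\pm$ into a genuine positive semidefinite bilinear form on $\mathcal P_2(\X)$, both $\langle W_*^\pm, \bar\nu^{\otimes 2}\rangle = \langle W_*^\pm, (\bar\nu - m_*)^{\otimes 2}\rangle$ and $\langle W_*^\pm, (\mu_{\vect X} - \bar\nu)^{\otimes 2}\rangle$ are nonnegative, so the Cauchy--Schwarz inequality applies.

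The diagonal remainder is then controlled by the martingale observation that $\mu_{\vect X} - \bar\nu = N^{-1}\sum_k(\delta_{X^k} - \nu_k)$ is a sum of orthogonal increments for the filtration $\sigma(\vect X^{[k]})$, so $\Expect\langle W_*^\pm, (\mu_{\vect X} - \bar\nu)^{\otimes 2}\rangle$ collapses to the diagonal $N^{-2}\sum_k \Expect\langle W_*^\pm, (\delta_{X^k} - \nu_k)^{\otimes 2}\rangle$. Each bounded-component entry is bounded by $\Osc_2 W_{\bd}^\pm \leqslant 4M_W$; each quadratic-component entry by $L_W$ times the conditional variance of $\nu_k$, which via a triangle inequality against the $m_*$-mean, Talagrand's inequality~\ref{assu:T} applied to $\Law(X^k)$, the subadditivity $\sum_k H(\Law(X^k)|m_*) \leqslant H(\nu^N|m_*^{\otimes N})$, and the dimensional bound $\Expect_{m_*}\lVert X - \Expect_{m_*}X\rVert^2 \leqslant d/\rho$, splits into the $\ell_W d$ offset in $\Delta_{\mathcal F}$ and an $\ell_W/N$ correction to $\delta_N$.

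The middle cross term I will handle by Cauchy--Schwarz on each $W_*^\pm$ followed by a weighted Young inequality $2ab \leqslant \tau a^2 + \tau^{-1} b^2$, with $\tau$ chosen proportional to $\varepsilon\delta/(M_W + \ell_W)$: this puts a loss of size $\tau(M_W + \ell_W)$ into the main coercivity term (contributing the $\varepsilon$-buffer in $(1-\varepsilon)\delta$) and a loss $\tau^{-1}(M_W + \ell_W d)$ into the remainder (contributing the factor $1 + (M_W+\ell_W)/(\varepsilon\delta)$). The delicate step will be sign bookkeeping: coercivity controls only the difference $W_*^+ - W_*^-$, so each Cauchy--Schwarz application must be carried out separately on the positive pieces, and the resulting nuisance $\langle W_*^+ + W_*^-, (\bar\nu - m_*)^{\otimes 2}\rangle$ bounded in terms of $4M_W + (4\ell_W/N)H(\nu^N|m_*^{\otimes N})$ before recombining. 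Collecting all estimates then produces the stated $\delta_N$ and $\Delta_{\mathcal F}$.
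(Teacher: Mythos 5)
Your proposal is correct and follows essentially the same route as the paper: apply \eqref{eq:mf-coercivity} to the conditional mixture $\bar\nu$, use the entropic chain rule with Jensen, pass from $\bar\nu^{\otimes 2}$ to $\mu_{\vect X}^{\otimes 2}$ by Cauchy--Schwarz applied separately to $W_*^+$ and $W_*^-$ with the martingale-orthogonality collapse of the diagonal term, and control errors via $\Osc_2$ for the bounded parts and Talagrand for the quadratic parts, with the weight $\tau\propto\varepsilon\delta/(M_W+\ell_W)$ matching the paper's choice of $\varepsilon_1$. The only imprecision is that the nuisance term $\langle W_*^++W_*^-,(\bar\nu-m_*)^{\otimes 2}\rangle$ must be bounded entropy-proportionally by $2(M_W+\ell_W)H(\bar\nu|m_*)\leqslant \frac{2(M_W+\ell_W)}{N}H(\nu^N|m_*^{\otimes N})$ (as in the last assertion of Lemma~\ref{lem:error-control}) rather than by the additive $4M_W+\cdots$ form you quote, since an additive $O(1)$ bound multiplied by $N\tau$ would give an $O(N)$ defect; your preceding sentence about absorbing $\tau(M_W+\ell_W)$ into the $\varepsilon$-buffer shows you intend the correct bound.
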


Theorem~\ref{thm:coercivity} follows
by direct application of the conditional approximation technique
and is proved in Section~\ref{sec:coercivity}.\footnote{%
After the posting of the preprint, it was pointed out to the author that
this proof resembles the Dupuis--Ellis proof of the large deviation
upper bound \cite[Section~2.5]{DupuisEllisWeakConvergence}.}

The constant $\delta_N$ can be made arbitrarily close to $\delta$
and is therefore almost optimal.
Indeed, fix $\nu \in \mathcal{P}_2(\X)$ with $\mathcal{F}(\nu|m_*) < \infty$
and set $\nu^N = \nu^{\otimes N}$.
By Theorem~\ref{thm:coercivity},
\[
\frac{\mathcal{F}^N(\nu^{\otimes N} | m_*)}{N}
\geqslant \delta_N H(\nu | m_*) - \frac{\Delta_{\mathcal{F}}}{N}.
\]
Meanwhile, Lemma~\ref{lem:recover-free-energy}
in Appendix~\ref{app:recover-mf} yields
\[
\lim_{N \to \infty} \frac{\mathcal{F}^N(\nu^{\otimes N} | m_*)}{N}
= \mathcal{F}(\nu | m_*).
\]
Letting $N \to \infty$ and using the arbitrariness of $\varepsilon$
recovers \eqref{eq:mf-coercivity}.
In other words, if $\delta$ is the optimal constant
for \eqref{eq:mf-coercivity},
the coercivity constant $\delta_N$ in Theorem~\ref{thm:coercivity} must satisfy
$\limsup_{N \to \infty} \delta_N \leqslant \delta$.

\begin{rem}
The coercivity constant $\delta_N$ in Theorem~\ref{thm:coercivity}
undergoes a loss of order $O(1/N)$ only when $\ell_W \neq 0$.
The contractivity constants
in Theorems~\ref{thm:contractivity-1} and \ref{thm:contractivity-2} below
exhibit similar dependency.
\end{rem}

\begin{rem}
This result can be compared to that of a previous work
\cite[Theorem~3]{socgibbs}.
The proof in the present approach is considerably simpler
thanks to the conditional approximation technique.
In addition, the condition \eqref{eq:mf-coercivity} naturally incorporates
the positive component of the interaction energy
$\langle W_*^+, \cdot^{\otimes 2}\rangle$.
However, Theorem~\ref{thm:coercivity} inevitably suffers
a multiplicative loss of concentration,
since letting $\varepsilon \to 0$ causes $\Delta$ to diverge.
In contrast, \cite[Theorem~3]{socgibbs}, when reformulated on the torus,
admits the case corresponding to $\varepsilon=\delta=0$.
\end{rem}

As observed in a previous work~\cite{socgibbs}, Theorem~\ref{thm:coercivity}
implies readily a lower bound for the entropy functional $H(\cdot | m^N_*)$,
which in turn implies a Gaussian concentration estimate for $m^N_*$.
We state this result precisely below; its proof
is provided in Section~\ref{sec:coercivity}.

\begin{cor}
\label{cor:entropy-coercivity}
Under the assumptions and notations of Theorem~\ref{thm:coercivity},
for all $\nu^N \in \mathcal P_2(\X)$,
\begin{align*}
H(\nu^N | m^N_*) &\geqslant \delta_N H(\nu^N | m_*^{\otimes N})
- \Delta_{\mathcal F} - 2M_W - 2\ell_W d.
\intertext{Consequently, for all $\nu^N \in \mathcal P_1(\X)$,}
H(\nu^N | m^N_*) &\geqslant \frac{\delta_N\rho}
{64(1+\Delta_{\mathcal F}+2M_W+2\ell_Wd)^2} W_1^2(\nu^N, m^N_*).
\end{align*}
\end{cor}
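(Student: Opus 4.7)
The plan is to deduce both inequalities from Theorem~\ref{thm:coercivity} by relating the relative entropy $H(\nu^N | m^N_*)$ to the modulated free energy $\mathcal F^N(\nu^N | m_*)$ and then applying standard transportation arguments. The starting point is that the density of $m^N_*$ with respect to $m_*^{\otimes N}$ is proportional to $\exp\bigl(-\tfrac{N}{2}\langle W_*, \mu_{\vect x}^{\otimes 2}\rangle\bigr)$, as follows from \eqref{eq:def-Pi-centered} applied coordinatewise together with the fixed-point relation \eqref{eq:Pi-self-consistency}. Hence, writing $Z$ for the corresponding partition function,
\[
H(\nu^N | m^N_*) = \mathcal F^N(\nu^N | m_*) - \log Z,
\qquad \log Z = \mathcal F^N(m^N_* | m_*),
\]
since the identity, valid for all $\nu^N$, holds in particular at the Gibbs minimizer $\nu^N = m^N_*$, where the left-hand side vanishes.

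For the first inequality, I apply Theorem~\ref{thm:coercivity} to the first term on the right and bound $\log Z$ from above. The minimizing property of $m^N_*$ gives $\log Z = \mathcal F^N(m^N_* | m_*) \leqslant \mathcal F^N(m_*^{\otimes N} | m_*) = \tfrac{N}{2}\,\Expect_{m_*^{\otimes N}}[\langle W_*, \mu^{\otimes 2}\rangle]$, because the entropy term of $\mathcal F^N(m_*^{\otimes N}|m_*)$ vanishes. In this last expectation the off-diagonal contributions are zero, since the reduction around $m_*$ kills the mean of $W_*$ against $m_*^{\otimes 2}$, so only the $N$ diagonal terms survive, yielding $\tfrac{1}{2}\int W_*(x,x)\,m_*(\dd x)$. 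Splitting $W_* = W^+_* - W^-_*$ and then further into bounded and quadratic components, the double oscillation bounds $\Osc_2 W^{\pm}_{\bd} \leqslant 4M_W$ control the bounded pieces, while the second-derivative bound $\lVert \nabla^2_{1,2} W^{\pm}_{\qd}\rVert_{L^\infty} \leqslant L_W$ combined with the $m_*$-second-moment control coming from Assumption~\ref{assu:T} controls the quadratic pieces, producing the net bound $\log Z \leqslant 2M_W + 2\ell_W d$. Substituting into the identity gives the claimed entropy lower bound.

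For the second (transportation) inequality, the key input is that Assumption~\ref{assu:T} tensorizes: $m_*^{\otimes N}$ satisfies the $\rho$-Talagrand inequality on $\X^N$ with the $\ell^2$ product metric, so $H(\nu^N | m_*^{\otimes N}) \geqslant \tfrac{\rho}{2} W_2^2(\nu^N, m_*^{\otimes N}) \geqslant \tfrac{\rho}{2} W_1^2(\nu^N, m_*^{\otimes N})$. Combining with the first part of the corollary yields
\[
\tfrac{\delta_N \rho}{2}\, W_1^2(\nu^N, m_*^{\otimes N})
\leqslant H(\nu^N | m^N_*) + C,
\qquad C := \Delta_{\mathcal F} + 2M_W + 2\ell_W d.
\]
Setting $\nu^N = m^N_*$ bounds $W_1(m^N_*, m_*^{\otimes N})^2 \leqslant 2C/(\delta_N \rho)$, and the triangle inequality then produces a defective $T_1$ for $m^N_*$ of the form $W_1^2(\nu^N, m^N_*) \leqslant \tfrac{4}{\delta_N \rho}\bigl(H(\nu^N | m^N_*) + 2C\bigr)$.

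The main obstacle is the last step: converting this defective $T_1$ into the genuinely tight inequality claimed, with the specific multiplicative constant $\delta_N\rho / \bigl(64(1+C)^2\bigr)$. A naive squaring of $\sqrt{H+C}+\sqrt{C}$ only yields a defective bound that does not vanish as $H \to 0$. The correct tightening uses Bobkov--Götze together with sub-Gaussian integrability of $1$-Lipschitz functions: the additive constant $2C$ can be absorbed into a multiplicative loss of order $(1+C)^{2}$ by exploiting the sub-Gaussian concentration inherited from $m_*^{\otimes N}$ through the bounded $W_1$-proximity of $m^N_*$ to $m_*^{\otimes N}$. This is precisely the algebraic manipulation carried out in \cite{socgibbs}, which I adapt here, and it accounts for the explicit factor $64(1+C)^2$ in the statement.
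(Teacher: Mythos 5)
Your proposal is correct and follows essentially the same route as the paper: the identity $H(\nu^N|m^N_*)=\mathcal F^N(\nu^N|m_*)-\mathcal F^N(m^N_*|m_*)$, the bound $\mathcal F^N(m^N_*|m_*)\leqslant\mathcal F^N(m_*^{\otimes N}|m_*)\leqslant 2M_W+2\ell_Wd$ via the minimizing property and the diagonal/error estimate (the paper packages this as Lemma~\ref{lem:error-control}), then tensorized Talagrand, the evaluation at $\nu^N=m^N_*$, the triangle inequality to reach a defective transport inequality, and the tightening of \cite[Theorem~2]{socgibbs} for the final constant. No gaps.
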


By expanding the definition of $\mathcal F^N(\nu^N | m_*)$,
the conclusion of Theorem~\ref{thm:coercivity} can be rewritten as
\[
- \frac{N}{2} \Expect_{\vect X \sim \nu^N}
[ \langle W_*, \mu_{\vect X}^{\otimes 2}\rangle ]
\leqslant (1-\delta_N) H(\nu^N | m_*^{\otimes N}) + \Delta_{\mathcal F}.
\]
This inequality mirrors the large deviation estimate employed to establish
propagation of chaos in the Jabin--Z.~Wang method
\cite[Theorems~1 and 2]{JabinWang}, though their assumptions
differ from \eqref{eq:mf-coercivity}.
This perspective has also been investigated in an earlier work
\cite[Corollary~4]{socgibbs}.
For further use, we state the following result in the bounded case.

\begin{cor}
\label{cor:jw}
Let $U \colon \X \times \X \to \R$
be positive with $\Osc_2 U \leqslant 4$.
Then for all $\delta \in (0,1)$, $m \in \mathcal P(\X)$
and $\nu^N \in \mathcal P(\X^N)$,
\[
N\Expect_{\vect X \sim \nu^N}
[\langle U, (\mu_{\vect X} - m)^{\otimes 2}\rangle]
\leqslant \frac{2}{1-\delta} H( \nu^N | m^{\otimes N})
+ 4\biggl(1+\frac{1-\delta}{2\delta}\biggr).
\]
\end{cor}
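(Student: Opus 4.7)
The plan is to apply the conditional approximation technique of Section~\ref{sec:approx}, in the same spirit as Theorem~\ref{thm:coercivity} but without appealing to any Talagrand inequality.
Given $\vect X \sim \nu^N$, introduce the conditional laws $\nu_k = \Law(X^k | X^1,\ldots,X^{k-1})$ and their average $\bar\nu = N^{-1}\sum_{k \in [N]}\nu_k$.
The decomposition $\mu_{\vect X} - m = (\mu_{\vect X} - \bar\nu) + (\bar\nu - m)$ splits the empirical deviation into a martingale increment and a conditional drift.
Since $U$ is positive, the bilinear form $(\sigma,\tau) \mapsto \langle U,\sigma\otimes\tau\rangle$ is positive semi-definite on mean-zero signed measures, and its square root is a semi-norm.
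Combining the triangle inequality with the weighted Cauchy--Schwarz bound $(a+b)^2 \leqslant \alpha^{-1}a^2 + \beta^{-1}b^2$ for $\alpha+\beta=1$ yields, pointwise in $\vect X$,
\[
\langle U, (\mu_{\vect X} - m)^{\otimes 2}\rangle
\leqslant \alpha^{-1}\langle U, (\mu_{\vect X} - \bar\nu)^{\otimes 2}\rangle
+ \beta^{-1}\langle U, (\bar\nu - m)^{\otimes 2}\rangle.
\]
The choice $\alpha = \delta$, $\beta = 1-\delta$ will eventually produce the coefficient $2/(1-\delta)$ on the relative entropy.

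To bound the martingale term, expand $\mu_{\vect X} - \bar\nu = N^{-1}\sum_k (\delta_{X^k} - \nu_k)$ and observe that, for $k < l$, conditioning on $\mathcal F_{l-1}$ leaves $\delta_{X^k} - \nu_k$ deterministic while $\delta_{X^l} - \nu_l$ retains conditional mean zero, so all off-diagonal cross terms vanish in expectation.
The diagonal contribution at index $k$ reduces to a ``$U$-variance'',
\[
\Expect\bigl[\langle U,(\delta_{X^k}-\nu_k)^{\otimes 2}\rangle \bigm| \mathcal F_{k-1}\bigr]
= \tfrac12 \iint \langle U,(\delta_x - \delta_y)^{\otimes 2}\rangle\, \nu_k(\dd x)\,\nu_k(\dd y)
\leqslant \Osc_2 U / 2 \leqslant 2,
\]
yielding $N\Expect[\langle U, (\mu_{\vect X} - \bar\nu)^{\otimes 2}\rangle] \leqslant 2$.

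For the drift term, write $\bar\nu - m = N^{-1}\sum_k(\nu_k - m)$ and apply Cauchy--Schwarz for the semi-norm to get $\langle U,(\bar\nu - m)^{\otimes 2}\rangle \leqslant N^{-1}\sum_k \langle U,(\nu_k - m)^{\otimes 2}\rangle$.
Each summand is controlled by the double oscillation and Pinsker's inequality, $\langle U, (\nu_k - m)^{\otimes 2}\rangle \leqslant \Osc_2 U \cdot \|\nu_k - m\|_{\TV}^2 \leqslant 2 H(\nu_k | m)$, and the entropy chain rule $H(\nu^N | m^{\otimes N}) = \sum_k \Expect[H(\nu_k | m)]$ then gives
$N\Expect[\langle U, (\bar\nu - m)^{\otimes 2}\rangle] \leqslant 2 H(\nu^N | m^{\otimes N})$.
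Collecting everything with $\alpha = \delta$ and $\beta = 1-\delta$ delivers
\[
N\Expect[\langle U, (\mu_{\vect X} - m)^{\otimes 2}\rangle]
\leqslant \tfrac{2}{\delta} + \tfrac{2}{1-\delta}\, H(\nu^N | m^{\otimes N}),
\]
which implies the stated bound because $\tfrac{2}{\delta} \leqslant 4\bigl(1 + \tfrac{1-\delta}{2\delta}\bigr) = 2 + \tfrac{2}{\delta}$.

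The delicate point is the martingale orthogonality: because $\nu_l$ for $l > k$ depends on $X^k$, the cross term $\langle U, (\delta_{X^k} - \nu_k)\otimes(\delta_{X^l} - \nu_l)\rangle$ is not pointwise zero, and its vanishing in expectation requires the iterated tower argument above rather than a naive independence claim.
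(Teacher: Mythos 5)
Your proof is correct. The paper obtains the corollary by substituting $W = -(1-\delta)U$, $m_* = m$ and $\varepsilon = 1$ into Theorem~\ref{thm:coercivity}, whose proof is precisely the conditional-approximation argument you carry out here (the martingale orthogonality of Lemma~\ref{lem:approx-1} for the $\mu_{\vect X}-\bar\nu$ term, and the oscillation--Pinsker bound of Lemma~\ref{lem:error-control} together with the entropy chain rule for the $\bar\nu-m$ term); your unrolled version even yields the marginally sharper additive constant $2/\delta$ in place of $4\bigl(1+\tfrac{1-\delta}{2\delta}\bigr)$.
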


Corollary~\ref{cor:jw} follows by setting $W = - (1-\delta) U$, $m_* = m$
and $\varepsilon=1$ in Theorem~\ref{thm:coercivity}.

\begin{rem}
\label{rem:jw}
Corollary~\ref{cor:jw} yields simpler constants
than those in the original work of \textcite[Theorem~4]{JabinWang}.
However, its applicability is more limited, as it requires
the kernel to be positive, which is not imposed in the cited work.
Nonetheless, in certain cases, this restriction can be bypassed.
For instance, if $U(x,y) = \varphi(x) \psi(y)$ for some
$\varphi$, $\psi \colon \X \to \R$, the polarization identity gives
\[
\varphi\otimes\psi + \psi\otimes\varphi
= \frac 12 \bigl( (\varphi+\psi)^{\otimes 2}
- (\varphi-\psi)^{\otimes 2}\bigr).
\]
Thus it suffices to apply Corollary~\ref{cor:jw} to the positive part
$(\varphi+\psi)^{\otimes 2}$.
For better constants in the Jabin--Z.~Wang large deviation estimate, see
also \textcite[Lemma~4.3]{LLNQuantitative}.
\end{rem}

\subsection{Contractivity}
\label{sec:mr-contractivity}

We next investigate the contractive properties of the modulated free energy
along Langevin dynamics. To this end, we impose two distinct
assumptions on the mean-field limit and formulate two corresponding theorems.

The first assumption operates on the level of entropy and energies:
\begin{alignat}{2}
\label{eq:mf-contractivity-1}
&\forall \nu \in \mathcal P_2(\X),&\qquad
H(\nu | \Pi[\nu])
&\geqslant \delta \mathcal F(\nu | m_*).
\tag{FE} \\
\intertext{Accordingly, we refer to this as a \emph{free energy condition}.
The second assumption is the contractivity \eqref{eq:mf-pl}
of the mean-field limit, reproduced here:}
\label{eq:mf-contractivity-2}
&\forall \nu\in \mathcal P_2(\X),&\qquad
I(\nu | \Pi[\nu])
&\geqslant 2\lambda \mathcal F(\nu | m_*).
\tag{P\L}
\end{alignat}
We refer to this as \eqref{eq:mf-contractivity-2}
because it is a \emph{Polyak--\L ojasiewicz inequality} for
the free energy $\mathcal F$ in Wasserstein space.

The two theorems are formulated as follows.

\begin{thm}
\label{thm:contractivity-1}
Let Assumptions~\ref{assu:W}, \ref{assu:R}, \ref{assu:LS}
and \ref{assu:LS-Pi} hold.
Let \eqref{eq:mf-coercivity} and \eqref{eq:mf-contractivity-1} hold
with the same constant $\delta \in (0,1]$.
Then for all $\varepsilon \in (0,1]$ and all $\nu^N \in \mathcal P_2(\X^N)$,
\[
I(\nu^N|m^N_*)
\geqslant 2\lambda_N
\mathcal F^N(\nu^N | m_*)
- \Delta_I,
\]
where $\lambda_N$ and $\Delta_I$ are given by
\begin{IEEEeqnarray*}{rCl}
\frac{\lambda_N}{\delta\rho_\Pi}
&=& 1 - \varepsilon
- \frac{8\rho\ell_R}{\rho_\Pi\varepsilon\delta^2N}
- \frac{48\ell_W}{\delta^2N}
\bigl(
1+(M_W+\ell_W)(1+\mu_R+\ell_R)\delta^{-2}\varepsilon^{-1}
\bigr), \\
\frac{\Delta_I}{2\rho_\Pi}
&=&
\frac{8\rho(\mu_R + \ell_Rd)}{\rho_\Pi\varepsilon}
+ 48(M_W+\ell_Wd)\bigl( 1
+ (M_W+\ell_W)(1+\mu_R+\ell_R)\delta^{-2}\varepsilon^{-1} \bigr).
\end{IEEEeqnarray*}
\end{thm}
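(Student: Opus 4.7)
The plan is to pass the mean-field condition \eqref{eq:mf-contractivity-1} to the $N$-particle level by evaluating it on the conditional marginals $\nu_k = \Law(X^k \mid \vect X^{[k-1]})$, which---unlike $\mu_{\vect X}$---have finite entropy almost surely, and then to reassemble using the conditional approximation $\bar\nu = \tfrac 1N \sum_k \nu_k \approx \mu_{\vect X}$ established in Section~\ref{sec:approx}.

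The first step is a variational lower bound for $I(\nu^N \mid m^N_*)$. Choosing the test vector field $f_k = \nabla \log(\nu_k / \Pi[\nu_k])(X^k)$, which depends only on $X^{[k]}$, in the Donsker--Varadhan-type characterization of Fisher information, an integration by parts in $x^k$ together with the martingale identity $\Expect[\nabla_k \log \nu^N_{>k}(X^{k+1:N} \mid X^{[k]}) \mid X^{[k]}] = 0$ reduces all cross terms to the single \emph{empirical-versus-conditional} discrepancy $B_k = \langle \nabla_1 W(X^k, \cdot),\, \mu_{\vect X} - \nu_k \rangle$. A Young-type inequality with parameter $\varepsilon$ then yields
\[
I(\nu^N \mid m^N_*) \geqslant (1-\varepsilon) \sum_k \Expect[I(\nu_k \mid \Pi[\nu_k])] - \tfrac{1-\varepsilon}{\varepsilon} \sum_k \Expect[|B_k|^2].
\]
The main term is handled by chaining Assumption~\ref{assu:LS-Pi} with \eqref{eq:mf-contractivity-1}: $I(\nu_k \mid \Pi[\nu_k]) \geqslant 2 \rho_\Pi H(\nu_k \mid \Pi[\nu_k]) \geqslant 2 \rho_\Pi \delta \mathcal F(\nu_k \mid m_*)$. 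After summing and applying the entropy chain rule $\sum_k \Expect[H(\nu_k \mid m_*)] = H(\nu^N \mid m_*^{\otimes N})$, I use Jensen's inequality for the positive form $\langle W_*^+, \cdot^{\otimes 2} \rangle$ (and its reverse for $W_*^-$) together with the quantitative comparison $\bar\nu \approx \mu_{\vect X}$ from Section~\ref{sec:approx} to rewrite $\sum_k \Expect[\langle W_*, \nu_k^{\otimes 2}\rangle]$ as $N\Expect[\langle W_*, \mu_{\vect X}^{\otimes 2}\rangle]$ up to an $O(M_W + L_W d)$ correction, thereby reconstructing $2 \rho_\Pi \delta \mathcal F^N(\nu^N \mid m_*)$ on the right-hand side.

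The second step is to control the error term. Assumption~\ref{assu:R} bounds $|B_k|^2 \leqslant \langle R, (\mu_{\vect X} - \nu_k)^{\otimes 2} \rangle$. Splitting $R = R_{\bd} + R_{\qd}$ and $\mu_{\vect X} - \nu_k = (\mu_{\vect X} - \bar\nu) + (\bar\nu - \nu_k)$, the bounded part yields to Corollary~\ref{cor:jw} applied to $R_{\bd,*}$, producing a term of the form $O(\mu_R / N) \cdot H(\nu^N \mid m_*^{\otimes N}) + O(M_R)$. For the quadratic part I use the Lipschitz bound on $\nabla_{1,2}^2 R_{\qd}$ together with the $W_2$-transport inequality from Assumption~\ref{assu:LS}, obtaining $O(\ell_R / N) \cdot H(\nu^N \mid m_*^{\otimes N}) + O(L_R d)$. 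The residual relative entropy is finally re-absorbed into $\mathcal F^N(\nu^N \mid m_*)$ via Theorem~\ref{thm:coercivity} (which requires \eqref{eq:mf-coercivity} with the same constant $\delta$), costing an additional factor of $\delta^{-1}$; this, combined with the factor from \eqref{eq:mf-contractivity-1}, is the origin of the $\delta^{-2}$ dependence in $\lambda_N$ and $\Delta_I$.

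The main obstacle is the tight book-keeping of constants across the two steps: the Jabin--Wang-type bound in the error analysis generates relative entropy that only the coercivity of Theorem~\ref{thm:coercivity}, with a constant matched to that of \eqref{eq:mf-contractivity-1}, can re-absorb into the modulated free energy. It is precisely this interplay that forces \eqref{eq:mf-coercivity} and \eqref{eq:mf-contractivity-1} to hold with the same $\delta$ and determines the explicit dependence of $\lambda_N$ and $\Delta_I$ on $M_W$, $\ell_W$, $\mu_R$, and $\ell_R$.
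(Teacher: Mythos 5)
There is a genuine gap at the heart of your Step 1. Your main term is $\sum_k\Expect[I(\nu_k\mid\Pi[\nu_k])]$, so the drift discrepancy you must control is $B_k=\langle\nabla_1W(X^k,\cdot),\mu_{\vect X}-\nu_k\rangle$, i.e.\ the empirical measure against a \emph{single} conditional law $\nu_k$. But only $\mu_{\vect X}-\bar\nu$ is small (of order $1/N$ in quadratic mean, by the martingale orthogonality of Lemma~\ref{lem:approx-1}); the individual differences $\bar\nu-\nu_k=\frac1N\sum_j(\nu_j-\nu_k)$ measure the spread of the conditionals and are $O(1)$ for correlated $\nu^N$. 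Consequently $\sum_k\Expect[\lvert B_k\rvert^2]$ is dominated by $\sum_k\Expect[\langle R,(\bar\nu-\nu_k)^{\otimes2}\rangle]$, which is of order $(\mu_R+\ell_R)\,H(\nu^N\mid m_*^{\otimes N})$ rather than $O(1)$; Corollary~\ref{cor:jw} does not apply to it, since $\nu_k$ is a random conditional law and not a fixed reference measure. With the prefactor $\varepsilon^{-1}$ this error swamps the main term $2\delta^2\rho_\Pi H(\nu^N\mid m_*^{\otimes N})$ unless $\mu_R+\ell_R$ is small, so your argument only closes in a perturbative regime, whereas the theorem makes no such smallness assumption. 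The paper avoids this entirely: the Fisher information (for the single index $k=N$, later symmetrized over permutations) is compared to $I(\nu_N\mid\Pi[\bar\nu])$ --- the local equilibrium of the \emph{averaged} conditional measure --- so the only measure discrepancy ever incurred is $\mu_{\vect X}-\bar\nu$; the condition~\eqref{eq:mf-contractivity-1} is then applied to $\bar\nu$, to lower-bound the log-partition term in $H(\nu_N\mid\Pi[\bar\nu])$, and never to the individual $\nu_k$.

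A second gap concerns symmetry. To reassemble $\sum_k\Expect[\langle W_*,\nu_k^{\otimes2}\rangle]$ into $N\Expect[\langle W_*,\mu_{\vect X}^{\otimes2}\rangle]$ you invoke a \emph{reverse} Jensen inequality for the component $W_*^-$; this is Lemma~\ref{lem:approx-2}, whose proof uses the tower rule $\Expect[\nu_N\mid\mathcal F_{k-1}]=\nu_k$, valid only for symmetric $\nu^N$. Theorem~\ref{thm:contractivity-1} is claimed for arbitrary $\nu^N\in\mathcal P_2(\X^N)$. Your route is in effect the paper's proof of Theorem~\ref{thm:contractivity-2} (graded dissipation through the individual $\nu_k$), which is precisely why that theorem is restricted to symmetric measures. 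For the non-symmetric statement the paper instead keeps $\bar\nu$ throughout, averages the resulting one-index bound over relabelings of the particles, and extracts $\delta H(\nu^N\mid m_*^{\otimes N})$ from the monotonicity of the permutation-averaged conditional entropies --- a step with no counterpart in your outline.
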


\begin{thm}
\label{thm:contractivity-2}
Let Assumptions~\ref{assu:W}, \ref{assu:R} and \ref{assu:LS} hold.
Let \eqref{eq:mf-coercivity} and \eqref{eq:mf-contractivity-2}
hold with $\lambda > 0$ and $\delta \in (0,1]$
and denote $\eta = \lambda/\rho$.
Then for all $\varepsilon \in (0,1]$
and all symmetric $\nu^N \in \mathcal P_2(\X^N)$,
\[
I(\nu^N | m^N_*) \geqslant 2\lambda_N
\mathcal F^N(\nu^N | m_*^{\otimes N}) - \Delta_I,
\]
where $\lambda_N$ and $\Delta_I$ are given by
\begin{IEEEeqnarray*}{rCl}
\frac{\lambda_N}{\lambda}
&=& 1 - \varepsilon - \frac{12\ell_R}{\varepsilon\delta\eta N}
(1+\mu_R+\ell_R) \bigl( 1+(\mu_R + \ell_R)(\delta\eta)^{-1}\bigr) \\
&&\adjustbin
- \frac{48\ell_W}{\delta N}
\Bigl( 1 + (M_W+\ell_W)\bigl(1+(\mu_R+\ell_R)\delta^{-1}\bigr) \varepsilon^{-1}
\Bigr),
\\
\frac{\Delta_I}{2\rho} &=&
\frac{12(\mu_R+\ell_Rd)}{\varepsilon}
(1+\mu_R+\ell_R) \bigl( 1+(\mu_R + \ell_R)(\delta\eta)^{-1}\bigr) \\
&&\adjustbin
- 48\eta(M_W+\ell_Wd)
\Bigl( 1 + (M_W+\ell_W)\bigl(1+(\mu_R+\ell_R)\delta^{-1}\bigr) \varepsilon^{-1}
\Bigr).
\end{IEEEeqnarray*}
\end{thm}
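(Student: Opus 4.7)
The plan is to lift the mean-field Polyak--\L ojasiewicz inequality \eqref{eq:mf-contractivity-2} to the $N$-particle level by applying it to the conditional laws $\nu_k = \Law(X^k \mid \vect X^{[k-1]})$ introduced in Section~\ref{sec:approx}. The entropy chain rule $H(\nu^N\mid m_*^{\otimes N}) = \sum_k \Expect[H(\nu_k\mid m_*)]$ guarantees that each $\nu_k$ has finite entropy almost surely under $\vect X \sim \nu^N$, so \eqref{eq:mf-contractivity-2} yields $I(\nu_k\mid \Pi[\nu_k]) \geqslant 2\lambda\,\mathcal F(\nu_k\mid m_*)$ for each $k$. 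Summing and taking expectations produces the candidate inequality; the task is then to match each side with the $N$-particle quantities $I(\nu^N\mid m^N_*)$ and $\mathcal F^N(\nu^N\mid m_*)$ up to controllable errors. The symmetry hypothesis is used to average over orderings so that the ordering-dependent chain rule aligns with the symmetric $N$-particle objects.

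For the Fisher-information side, I would rely on the chain-rule representation
\[
I(\nu^N\mid m^N_*) = \sum_k \Expect[I(\nu_k\mid \pi_k)],
\]
where $\pi_k$ is the conditional density of $X^k$ under $m^N_*$ given $\vect X^{[k-1]}$. The log-ratio $\log(\pi_k/\Pi[\nu_k])$ is essentially an interaction of $W$ against the difference between $\nu_k$ and an empirical correction involving the remaining particles, so its squared gradient falls under Assumption~\ref{assu:R}: the bounded part $R_{\bd}$ contributes an $O(M_R)$ term to $\Delta_I$ together with an $O(\ell_R/N)$ loss in $\lambda_N$, while the quadratic part $R_{\qd}$ produces $W_2^2$-type penalties that Talagrand (implied by Assumption~\ref{assu:LS}) converts into relative entropies eligible for later absorption.

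For the free-energy side, the centered identity \eqref{eq:def-relative-F-centered} splits
\[
\sum_k \Expect[\mathcal F(\nu_k\mid m_*)] = H(\nu^N\mid m_*^{\otimes N}) + \frac 12 \sum_k \Expect[\langle W_*, \nu_k^{\otimes 2}\rangle],
\]
and the energy sum must be compared with $\tfrac N2\, \Expect[\langle W_*, \mu_{\vect X}^{\otimes 2}\rangle]$ from $\mathcal F^N(\nu^N\mid m_*)$. The identity $\tfrac 1N \sum_k \nu_k^{\otimes 2} = \bar\nu^{\otimes 2} + \tfrac 1N \sum_k (\nu_k - \bar\nu)^{\otimes 2}$ together with the orthogonality of the martingale increments composing $\mu_{\vect X}-\bar\nu$ yields $O(1/N)$ discrepancies in expectation; under Assumption~\ref{assu:W}, the bounded components $W_{\bd}^\pm$ then feed into $\Delta_I$ and into the $O(\ell_W/N)$ loss in $\lambda_N$, while the quadratic components $W_{\qd}^\pm$ are handled once more through Talagrand.

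Assembling the two sides and absorbing the residual relative-entropy terms through the coercivity hypothesis \eqref{eq:mf-coercivity} reproduces the stated forms of $\lambda_N$ and $\Delta_I$, the parameter $\varepsilon$ emerging from a Young-type splitting between main and error contributions. The hard part will be the Fisher-information decomposition and the $\Pi[\nu_k]$-vs.-$\pi_k$ comparison, since this is where the dependence on $R$ is generated and where the symmetry of $\nu^N$ is indispensable; without symmetry the ordering-averaging fails and one is forced onto the alternative route of Theorem~\ref{thm:contractivity-1}, based on the stronger free-energy condition \eqref{eq:mf-contractivity-1} and on the uniform log-Sobolev inequality for images of $\Pi$ from Assumption~\ref{assu:LS-Pi}.
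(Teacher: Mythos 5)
Your overall strategy --- apply \eqref{eq:mf-contractivity-2} to the conditional laws $\nu_k$, compare the resulting sums with the $N$-particle Fisher information and modulated free energy via the conditional approximation, and absorb residual entropies through \eqref{eq:mf-coercivity} --- is indeed the paper's strategy. But the two steps you yourself flag as the hard part are sketched in a way that would fail. First, the claimed chain rule $I(\nu^N\mid m^N_*)=\sum_k\Expect[I(\nu_k\mid\pi_k)]$ is false: writing $\log\nu^N(\vect x)=\sum_k\log\nu_k(x^k\mid\vect x^{[k-1]})$, the gradient $\nabla_i$ hits every conditional $\nu_k$ with $k\geqslant i$, not just the $i$-th one, so the per-coordinate Fisher terms do not decompose. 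The identity is exact only for the \emph{last} coordinate, where $\nabla_N\log\nu^N(\vect x)=\nabla\log\nu_N(x^N\mid\vect x^{[N-1]})$; the paper exploits precisely this, using symmetry of $\nu^N$ to write $\frac 1NI(\nu^N\mid m^N_*)$ as the last-coordinate contribution alone, and then obtains the graded sum $\frac 1N\sum_k\Expect[I(\nu_k\mid\Pi[\nu_k])]$ not from a chain rule but by expanding $I(\nu_N\mid\Pi[\bar\nu])$ into three terms and converting the first two --- which are respectively convex and \emph{linear} in $\nu_N$ --- into sums over $k$ via the tower rule $\Expect[\nu_N\mid\mathcal F_{k-1}]=\nu_k$, then completing the square at the cost of a correction controlled by Assumption~\ref{assu:R}.

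Second, on the energy side, the decomposition $\frac 1N\sum_k\nu_k^{\otimes 2}=\bar\nu^{\otimes 2}+\frac 1N\sum_k(\nu_k-\bar\nu)^{\otimes 2}$ does not give what you need: the variance term $\frac 1N\sum_k(\nu_k-\bar\nu)^{\otimes 2}$ is generically $O(1)$, not $O(1/N)$, because the $\nu_k$ condition on different amounts of information and need not cluster around $\bar\nu$. The martingale orthogonality only controls $\Expect[\langle U,(\mu_{\vect X}-\bar\nu)^{\otimes 2}\rangle]$, which is a different quantity. The paper closes this gap with the reverse-Jensen inequality of Lemma~\ref{lem:approx-2}, whose proof again hinges on the tower rule under symmetry to get the exact identity $\Expect[\langle U,\nu_N\otimes\bar\nu\rangle]=\frac 1N\sum_k\Expect[\langle U,\nu_k^{\otimes 2}\rangle]$, and which costs an extra term $\varepsilon\Expect[\langle U,\nu_N^{\otimes 2}\rangle]$ that must itself be absorbed into the Fisher information (via Lemma~\ref{lem:error-control} and Assumption~\ref{assu:LS}). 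So the role of symmetry is not ``averaging over orderings'' (that device belongs to the proof of Theorem~\ref{thm:contractivity-1}); it is the tower rule, which is what makes both the Fisher-side conversion and the energy-side comparison work.
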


The proofs of Theorems~\ref{thm:contractivity-1} and \ref{thm:contractivity-2}
rely crucially on the conditional approximation technique
and are given in Section~\ref{sec:contractivity}.

Observe that the assumptions of Theorem~\ref{thm:contractivity-1}
imply those of Theorem~\ref{thm:contractivity-2}
with $\lambda = \delta\rho_\Pi$.
Indeed, by successively applying Assumption~\ref{assu:LS-Pi} and
\eqref{eq:mf-contractivity-1}, we deduce
\[
I(\nu | \Pi[\nu]) \geqslant 2\rho_\Pi H(\nu | \Pi[\nu])
\geqslant 2\delta\rho_\Pi \mathcal F(\nu | m_*).
\]
Conversely, Theorem~\ref{thm:contractivity-1} applies
to any probability measure in $\mathcal P_2$,
whereas Theorem~\ref{thm:contractivity-2} is limited to symmetric ones.

The contractivity constant in Theorem~\ref{thm:contractivity-2}
is almost optimal
and the justification parallels that for Theorem~\ref{thm:coercivity}.
Indeed, fix $\nu \in \mathcal P_2(\X)$ with $I(\nu | m_*) < \infty$
and set $\nu^N = \nu^{\otimes N}$.
Theorem~\ref{thm:contractivity-2} yields
\[
\frac{I(\nu^{\otimes N} | m^N_*)}{N} \geqslant 2\lambda_N
\frac{\mathcal F^N(\nu^{\otimes N} | m_*)}{N} - \frac{\Delta_I}{N}.
\]
Meanwhile, Lemmas~\ref{lem:recover-free-energy} and \ref{lem:recover-Fisher}
in Appendix~\ref{app:recover-mf} give, respectively,
\[
\lim_{N \to \infty} \frac{I(\nu^{\otimes N} | m^N_*)}{N}
= I(\nu | \Pi[\nu]),\quad
\lim_{N \to \infty} \frac{\mathcal F^N(\nu^{\otimes N} | m^N_*)}{N}
= \mathcal F(\nu | m_*).
\]
Letting $N \to \infty$ and using the arbitrariness of $\varepsilon$
recovers \eqref{eq:mf-contractivity-2}.

\begin{rem}
The assumptions \eqref{eq:mf-contractivity-1} and \eqref{eq:mf-contractivity-2}
are imposed directly on the functionals defined
on the space of probability measures.
In contrast, \textcite{BBDCriterionFreeEnergy}
require a mode decomposition as in Remark~\ref{rem:bbd-spectral},
project the free energy functional $\mathcal F$ according to this decomposition,
and impose a Polyak--\L ojasiewicz inequality
on the projected functional.
The author is not aware whether this inequality
implies \eqref{eq:mf-contractivity-1} or \eqref{eq:mf-contractivity-2},
or conversely.
\end{rem}

\begin{rem}
In both theorems,
the error term $\Delta_I$ remains of order $O(1)$ as $N \to \infty$,
and this asymptotic behavior is realized in Gaussian examples;
see~\cite[Remark~2.7]{ulpoc}.
However, when approaching the phase transition, namely
when $\delta$, $\eta \to 0$, the error term
does not appear to exhibit the optimal critical behavior.
\end{rem}

Combining the coercivity and contractivity of the modulated free energy yields
a \emph{defective} log-Sobolev inequality for the Gibbs measure $m^N_*$.
This, in turn, yields exponential entropy convergence for the Langevin
dynamics up to $O(1)$ terms.
The precise statement is given below and the proof is
provided in Section~\ref{sec:contractivity}.

\begin{cor}
\label{cor:defective-lsi}
Under the assumptions and notations of Theorems~\ref{thm:coercivity}
and \ref{thm:contractivity-1},
if $\delta_N$, $\lambda_N > 0$,
then for all $\nu^N \in \mathcal P_2(\X^N)$,
\[
I(\nu^N | m^N_*) \geqslant 2\lambda_N H(\nu^N | m^N_*)
- \Delta_I - 2\lambda_N \Delta_{\mathcal F}.
\]
Consequently, the Langevin dynamics \eqref{eq:ps-fp} satisfies
\[
H(m^N_t | m^N_*) \leqslant e^{-2\lambda_N t} H(m^N_0 | m^N_*)
- \frac{\Delta_I}{2\lambda_N} - \Delta_{\mathcal F}.
\]

Similarly, under the assumptions and notations of Theorems~\ref{thm:coercivity}
and \ref{thm:contractivity-2},
if $\delta_N$, $\lambda_N > 0$,
then the above assertions hold for symmetric $\nu^N \in \mathcal P_2(\X^N)$
and symmetric Langevin dynamics, respectively.
\end{cor}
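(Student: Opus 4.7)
The plan is to reduce the defective log-Sobolev inequality to the contractivity estimate via a single algebraic manipulation, and then to derive the dynamical bound by a standard Grönwall argument.

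The key observation is that for every $\nu^N \in \mathcal P_2(\X^N)$, the modulated and the genuine relative entropies differ by a $\nu^N$-independent quantity:
\[
H(\nu^N | m^N_*) = \mathcal F^N(\nu^N | m_*) - \mathcal F^N(m^N_* | m_*).
\]
Indeed, since $m^N_*$ is the Gibbs measure associated with $\mathcal F^N$, one has $H(\nu^N | m^N_*) = \mathcal F^N(\nu^N) - \mathcal F^N(m^N_*)$; subtracting the same $N\mathcal F(m_*)$ from both terms on the right rewrites this identity in the displayed form using $\mathcal F^N(\,\cdot\,| m_*) = \mathcal F^N(\cdot) - N\mathcal F(m_*)$. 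I would then apply Theorem~\ref{thm:coercivity} to the specific measure $\nu^N = m^N_*$; since $H(m^N_* | m_*^{\otimes N}) \geqslant 0$, this produces the one-sided bound $\mathcal F^N(m^N_* | m_*) \geqslant -\Delta_{\mathcal F}$. Substituting into the identity above gives $\mathcal F^N(\nu^N | m_*) \geqslant H(\nu^N | m^N_*) - \Delta_{\mathcal F}$, and combining this with Theorem~\ref{thm:contractivity-1} yields the defective log-Sobolev inequality. For the second version of the corollary the same argument works verbatim once one notes that $m^N_*$ is symmetric (because $W$ is), so Theorem~\ref{thm:contractivity-2} applies to $\nu^N = m^N_*$ in the coercivity step, and symmetry is preserved along the flow so long as $m^N_0$ is symmetric.

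For the exponential entropy bound along \eqref{eq:ps-fp}, I would use the de Bruijn-type identity $\tfrac{\mathrm d}{\mathrm d t}H(m^N_t | m^N_*) = -I(m^N_t | m^N_*)$, plug in the defective log-Sobolev inequality just established, and close the estimate by Grönwall's lemma; this produces the claimed exponential decay towards the $O(1)$ equilibrium error $\tfrac{\Delta_I}{2\lambda_N} + \Delta_{\mathcal F}$. I do not anticipate any serious obstacle: once one recognizes that applying coercivity at the equilibrium $m^N_*$ itself is exactly the ingredient needed to exchange the modulated free energy for the usual relative entropy, the rest is routine.
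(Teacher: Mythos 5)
Your proposal is correct and follows essentially the same route as the paper: the identity $H(\nu^N|m^N_*) = \mathcal F^N(\nu^N|m_*) - \mathcal F^N(m^N_*|m_*)$, coercivity applied at $\nu^N = m^N_*$ to get $\mathcal F^N(m^N_*|m_*) \geqslant -\Delta_{\mathcal F}$, substitution into the contractivity theorems, and Grönwall. (Minor slip only: the coercivity step invokes Theorem~\ref{thm:coercivity}, which holds for arbitrary measures, so no symmetry of $m^N_*$ is needed there; symmetry matters only for applying Theorem~\ref{thm:contractivity-2} to $\nu^N$ and along the flow.)
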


Corollary~\ref{cor:defective-lsi} partially confirms the recent conjecture of
\textcite{MonmarcheULSIBeyond}. Under the
Polyak--\L ojasiewicz inequality~\eqref{eq:mf-contractivity-2}, the
defective log-Sobolev inequality holds only for symmetric measures,
whereas the general case requires the stronger hypotheses of
Theorem~\ref{thm:contractivity-1}.
The original conjecture of \textcite{DGPSPhase}
remains open and appears challenging to the author.

\begin{rem}
If a $\lambdaPN$-Poincaré inequality for $m^N_*$ can be established,
namely, for all sufficiently regular $f \colon \X^N \to \R$,
\[
\int_{\X^N} { \lvert \nabla f\rvert^2 \dd m^N_*}
\geqslant \lambdaPN \biggl( \int_{\X^N} f^2 \dd m^N_*
- \Bigl( \int_{\X} f \dd m^N_* \Bigr)^{\!2}\biggr),
\]
then, by the quantitative tightening in \cite[Proposition~5]{nulsi},
the defective inequality in Corollary~\ref{cor:defective-lsi}
would yield a log-Sobolev inequality for $m^N_*$ with constant
\[
\biggl( 1 + \frac{\Delta_I + 2\lambda_N\Delta_{\mathcal F}}{4\lambdaPN}
\biggr)^{\!-1} \lambda_N.
\]
If $\lambdaPN$ admits a uniform lower bound in $N$,
then so does this log-Sobolev constant.
However, the author is not aware of a proof
of the uniform Poincaré inequality in this context.
Furthermore, even with $\lambdaPN = \lambda$,
this does not yield $\lambda$ as the log-Sobolev constant,
contrary to the conjecture in \cite{DGPSPhase}.
\end{rem}

\subsection{Generation of chaos}
\label{sec:mr-goc}

We now examine the phenomenon of \emph{generation of chaos}.
This term describes situations where the initial distribution of particles is
far from tensorized, yet dynamical dissipation drives the system so that
the particles become nearly independent after sufficient time.
This concept is even stronger than the uniform-in-time propagation of chaos
that has been studied intensively in recent years.

We employ the modulated free energy method to analyze this phenomenon.
This approach studies the evolution of the dynamically modulated free energy
\[
\mathcal F^N(m^N_t | m_t)
= H(m^N_t | m_t^{\otimes N})
+ \frac N2 \Expect_{\vect X \sim m^N_t}
[ \langle W, (\mu_{\vect X} - m_t)^{\otimes 2}\rangle ],
\]
which corresponds to replacing the static $m_*$ with the dynamical $m_t$
in the centered rewriting \eqref{eq:def-modulated-F-centered}
of modulated free energy.
\textcite{BJWPKSCompteRendu,BJWMFE}
originally introduced the modulated free energy
in its dynamical form to study propagation of chaos
for singular diffusive dynamics.
Here, the focus is not on accommodating singularities, but on establishing
long-time estimates with the optimal mean-field convergence rate.

To formulate the result, we introduce the following notations:
\begin{align*}
\Pi_{m} [\nu] (\dd x) &\propto
\exp (- \langle W(x,\cdot), \nu - m\rangle)\,m(\dd x), \\
\mathcal F(\nu | m) &= H(\nu | m)
+ \frac 12 \langle W, (\nu-m)^{\otimes 2}\rangle.
\end{align*}
They correspond to replacing $m_*$ with $m$
in the centered rewritings \eqref{eq:def-Pi-centered}
and \eqref{eq:def-relative-F-centered} of $\Pi$ and $\mathcal F(\cdot|m_*)$,
respectively.
We also introduce the coercivity and contractivity conditions
associated to the new free energy functional $\mathcal F(\cdot|m)$:
\begin{alignat}{2}
&\forall \nu \in \mathcal P_2(\X),&\qquad
\mathcal F(\nu | m) &\geqslant \delta \mathcal H(\nu | m),
\label{eq:Coer-m} \tag{Coer$_m$}\\
&\forall \nu \in \mathcal P_2(\X),&\qquad
H(\nu | \Pi_m[\nu]) &\geqslant \delta \mathcal F(\nu | m),
\label{eq:FE-m} \tag{FE$_m$}
\end{alignat}
Furthermore, define the vector field
\[
v_t(x) \coloneqq - \nabla \log \frac{m_t}{\Pi[m_t]}(x)
= - \nabla \log m_t(x) - \langle \nabla_1 W(x,\cdot), m_t\rangle.
\]
Observe that the flow \eqref{eq:mf-fp} is transported by $v_t$:
\[
\partial_t m_t + \nabla\cdot(m_tv_t) = 0.
\]

\begin{thm}
\label{thm:goc}
Let Assumptions~\ref{assu:W} and \ref{assu:R} hold.
Let $\delta$, $\rho$, $\rho_\Pi \colon [0,\infty) \to (0,\infty)$
and $\gamma$, $M \colon [0, \infty) \to [0,\infty)$.
For all $t \geqslant 0$,
suppose that \eqref{eq:Coer-m} and \eqref{eq:FE-m} hold
for $m = m_t$ and $\delta = \delta(t)$;
that $m_t$ satisfies a $\rho(t)$-log-Sobolev inequality
and the images of $\Pi_{m_t}$ satisfies a $\rho_{\Pi}(t)$-log-Sobolev equality
uniformly;
and that the dynamics \eqref{eq:mf-fp} and \eqref{eq:ps-fp} satisfy
\begin{multline}
\label{eq:goc-error}
- N \int_{\X^N} \int_{\X^2}
\nabla_1 W(y,z) \cdot v_t(z)\,(\mu_{\vect x} - m_t)^{\otimes 2}
(\dd y \dd z)\, m^N_t(\dd\vect x) \\
\leqslant 2\gamma(t) \mathcal F^N(m^N_t | m_t) + M(t).
\end{multline}
Then for all $\varepsilon \in (0,1)$ and for all $t \geqslant 0$,
\[
\mathcal F^N(m^N_t | m_t)
\leqslant \exp\bigl(-2\Gamma(t,0)\bigr)
\mathcal F^N(m^N_0 | m_0)
+ \int_0^t \exp\bigl(-2\Gamma(t,s)\bigr)
\bigl( \Delta_I(s) + M(s) \bigr) \dd s,
\]
where
$\Gamma (t,s) \coloneqq \int^t_s \bigl(\lambda_N(u) - \gamma(u)\bigr) \dd u$,
and $\lambda_N(t)$ and $\Delta_I(t)$ are obtained
from the expressions for $\lambda$ and $\Delta_I$
in Theorem~\ref{thm:contractivity-1} with the following substitutions:
\[
\delta\to\delta(t),
\quad\rho\to\rho(t),
\quad\rho_\Pi \to \rho_\Pi(t),
\quad\ell_W\to \frac{L_W}{\rho(t)},
\quad\mu_R\to \frac{M_R}{\rho(t)},
\quad\ell_R\to \frac{L_R}{\rho(t)^2}.
\]
\end{thm}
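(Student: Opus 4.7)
My plan is to combine the dynamical modulated free energy method (in the spirit of Bresch, Jabin and X.~Wang) with a pointwise-in-time application of Theorem~\ref{thm:contractivity-1}.

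The first step is to differentiate $t \mapsto \mathcal{F}^N(m^N_t | m_t)$ along the coupled flows \eqref{eq:ps-fp} and \eqref{eq:mf-fp}. Writing $\mathcal{F}^N(\cdot | m_t)$ in centered form via the reduced kernel $W_{m_t}$ and introducing the instantaneous Gibbs measure
\[
m^N_{*,t}(\dd \vect{x}) \propto \exp\bigl(-N\mathcal{F}(\mu_{\vect{x}} | m_t)\bigr)\,\dd \vect{x}
\]
(so that $\mathcal{F}^N(\cdot | m_t) = H(\cdot | m^N_{*,t})$ up to an $m_t$-dependent constant), the derivative splits into two contributions. The first is the dissipation of relative entropy at frozen reference $m = m_t$, which a direct integration by parts — using that $\nabla_i \log(m^N_{*,t}/m^N_*) = -v_t(x^i)$ — identifies as $-I(m^N_t | m^N_{*,t})$ plus a term linear in $v_t$. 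The second comes from $\partial_t m_t = -\nabla\cdot(m_t v_t)$ acting on both the entropy and interaction parts of $\mathcal{F}^N(\cdot | m_t)$; after symmetrization in the two arguments of $W$ and integration by parts against $v_t$, these terms should combine with the residual from the first contribution to collapse into exactly the bilinear form $\int\int \nabla_1 W(y,z) \cdot v_t(z)\,(\mu_{\vect{x}} - m_t)^{\otimes 2}(\dd y\,\dd z)$ appearing in \eqref{eq:goc-error}.

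Second, I would invoke Theorem~\ref{thm:contractivity-1} at each time $t$ with the static $m_*$ replaced by $m_t$. The proof of that theorem never uses the fixed-point relation $m_* = \Pi[m_*]$ beyond the centered rewritings \eqref{eq:def-Pi-centered}--\eqref{eq:def-modulated-F-centered}; its only ingredients are a log-Sobolev inequality for the centering measure, a uniform log-Sobolev inequality for the images of the centered Gibbs map $\Pi_\cdot$, and the coercivity and free-energy conditions \eqref{eq:Coer-m}--\eqref{eq:FE-m}. All of these are in force at time $t$ by hypothesis with constants $\rho(t)$, $\rho_\Pi(t)$, $\delta(t)$, so the argument transports verbatim to give
\[
I(m^N_t | m^N_{*,t}) \geqslant 2\lambda_N(t)\,\mathcal{F}^N(m^N_t | m_t) - \Delta_I(t),
\]
with $\lambda_N(t)$, $\Delta_I(t)$ obtained from the Theorem~\ref{thm:contractivity-1} formulas by the stated substitutions (the replacements of $\ell_W$, $\mu_R$, $\ell_R$ merely re-express the dimensionless quantities using $\rho(t)$ in place of $\rho$).

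Third, hypothesis \eqref{eq:goc-error} bounds the cross term from Step~1 by $2\gamma(t)\mathcal{F}^N(m^N_t | m_t) + M(t)$. Combining with Step~2 yields the differential inequality
\[
\frac{\dd}{\dd t}\mathcal{F}^N(m^N_t | m_t) \leqslant -2\bigl(\lambda_N(t) - \gamma(t)\bigr)\mathcal{F}^N(m^N_t | m_t) + \Delta_I(t) + M(t),
\]
and multiplying by $\exp(2\Gamma(t,0))$ before integrating between $0$ and $t$ produces the claimed bound. The main obstacle is Step~1: the derivative contains several terms with different tensor structures — from $\partial_t m_t$ acting on $\log m_t^{\otimes N}$ in the entropy, on $W_{m_t}$ in the interaction, and on the $v_t$-linear residual from the drift mismatch — and one must verify they combine cleanly into \emph{precisely} the $(\mu_{\vect{x}} - m_t)^{\otimes 2}$ structure of \eqref{eq:goc-error}, with no leftover linear or boundary terms. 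A secondary point is checking that the proof of Theorem~\ref{thm:contractivity-1} is genuinely centering-agnostic, which is straightforward since the conditional approximation of Section~\ref{sec:approx} operates at the level of measures rather than any equilibrium identity.
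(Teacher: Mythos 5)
Your proposal is correct and follows essentially the same route as the paper: differentiate the dynamically modulated free energy to obtain the Bresch--Jabin--Wang identity (which the paper simply cites from \textcite[Proposition~2.1]{BJWPKSCompteRendu} rather than rederiving), identify the dissipation as $I(m^N_t \mid g^N_t)$ for the instantaneous Gibbs measure, apply Theorem~\ref{thm:contractivity-1} with $m_* \to m_t$ and $W_* \to W_{m_t}$, control the cross term by hypothesis~\eqref{eq:goc-error}, and conclude by Grönwall. The step you flag as the main obstacle is exactly the content of the cited proposition, and your observation that the proof of Theorem~\ref{thm:contractivity-1} is centering-agnostic matches the paper's substitution argument.
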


Theorem~\ref{thm:goc} follows directly from Theorem~\ref{thm:contractivity-2},
and its proof is provided in Section~\ref{sec:goc}.
The central idea is to express the modulated free energy's dissipation as
\[
I (m^N_t | g^N_t),
\]
where $g^N_t$ is the $N$-particle Gibbs measure
corresponding to the free energy functional $\mathcal F(\cdot | m_t)$.
This mirrors the main argument in a recent work
of \textcite{RosenzweigSerfatyMLSI},
where $g^N_t$ is assumed to satisfy a log-Sobolev inequality uniformly in $N$.
Here, we do not impose this assumption; instead, we apply
Theorem~\ref{thm:contractivity-1} to the new free energy
and achieve contraction up to an $O(1)$ term.

\begin{rem}
The condition~\eqref{eq:goc-error} corresponds
to the functional inequality derived by \textcite[Proposition~1.1]{SerfatyME}
for mean-field Coulomb flows, later extended to all orders of commutation
by \textcite{RosenzweigSerfatySharpCommutator}.
In contrast, our aim is not to establish an inequality
accommodating singularities in $W$, but to derive long-time relaxation,
reflected in a decaying $\gamma$.
In our regular setting, the left-hand side of \eqref{eq:goc-error}
can be often controlled by $H(m^N_t | m_t^{\otimes N})$
through large deviation estimates of the Jabin--Z.~Wang type;
see Corollary~\ref{cor:jw}.
It then suffices to apply Theorem~\ref{thm:coercivity}
to further bound $H(m^N_t | m_t^{\otimes N})$
by $\mathcal F^N(m^N_t | m_t)$.
\end{rem}

\subsection{Independent projection}
\label{sec:mr-ip}

Finally, we establish long-time convergence for the \emph{independent
projection} of Langevin dynamics, introduced by
\textcite{LackerIndependentProjections}, as the concluding main result.
Let $N \geqslant 2$.
The independently projected Langevin dynamics is defined,
for each $i \in [N]$, by
\begin{equation}
\label{eq:ip-sde}
\dd X^i_t = - \nabla V(X^i_t) \dd t
- \frac{1}{N-1} \sum_{j \in [N]: j \neq i}
\Expect[\nabla_1 W(X^i_t, X^j_t) | X^i_t] \dd t
+ \sqrt{2} \dd B^i_t,
\end{equation}
where $B^i$ are independent standard $d$-dimensional Brownian motions
and $X^i_t \in \X$.
This dynamics addresses the mean-field variational inference problem
\[
\inf_{\xi^1,\ldots,\xi^N \in \mathcal P_2(\X)}
H(\xi^1\otimes\cdots\otimes\xi^N | m'^N_*),
\]
where $m'^N_*$ is the $N$-particle Gibbs measure without self-interaction:
\[
m'^N_*(\dd\vect x) \propto
\exp \biggl(
  - \sum_{i \in [N]} V(x^i)
  - \frac{1}{2(N-1)} \sum_{i,j \in [N]: i\neq j} W(x^i,x^j)
\biggr)
\dd\vect x.
\]
See \cite{LackerIndependentProjections} for further details and references.

Suppose that the initial value
$\vect X_0 = (X^1_0, \ldots, X^N_0)$ has a tensorized distribution:
\begin{align*}
\Law(\vect X_0) &= \vect\xi_0 = \xi^1_0 \otimes \cdots \otimes \xi^N_0.
\intertext{%
It is straightforward to verify that, under suitable regularity assumptions on
$V$ and $W$, the unique strong solution to \eqref{eq:ip-sde} remains
tensorized, that is,}
\Law(\vect X_t) &= \vect\xi_t = \xi^1_t \otimes \cdots \otimes \xi^N_t.
\end{align*}
Moreover, each $\xi^i_t$ solves the Fokker--Planck equation
\begin{equation}
\label{eq:ip-fp}
\partial_t \xi^i_t
= \Delta \xi^i_t
+ \nabla\cdot(\nabla V\xi^i_t)
+ \nabla\cdot\biggl(
\int_{\X} \nabla_1 W(\cdot,y)\,\bar\xi^{-i}_t(\dd y)\,\xi^i_t
\biggr),
\end{equation}
where
\[
\bar\xi^{-i}_t \coloneqq \frac 1{N-1}
\sum_{j\in[N]:j \neq i} \xi^j_t.
\]

Our analysis of the long-time behavior of the projected dynamics is based on
\eqref{eq:ip-fp}, thus operating at the marginal distribution level. More
precisely, for $\xi^1, \ldots, \xi^N \in \mathcal P_2(\X)$, we introduce the
functional
\[
\mathcal F^N_{\ind}(\xi^1, \ldots, \xi^N | m_*)
= \sum_{i \in [N]} H(\xi^i | m_*)
+ \frac{1}{2(N-1)} \sum_{i, j \in [N]: i \neq j}
\langle W_*, \xi^i \otimes \xi^j \rangle.
\]
Observe that
\begin{multline*}
\mathcal F^N_{\ind}(\xi^1, \ldots, \xi^N | m_*)
- \mathcal F^N(\xi^1 \otimes \cdots \otimes \xi^N | m_*) \\
= \frac{1}{2N(N-1)}
\sum_{i, j \in [N]: i \neq j}
\langle W_*, \xi^i \otimes \xi^j \rangle
- \frac{1}{2N} \sum_{i \in [N]}
\langle W_*, (\xi^i)^{\otimes 2} \rangle.
\end{multline*}
When $W$ is bounded, this difference is of order $O(1)$, whereas both
$\mathcal F^N_{\ind}$ and $\mathcal F^N$ are typically of order $O(N)$.
Hence the difference becomes negligible for large $N$.
For this reason, we also refer to $\mathcal F^N_{\ind}$
as the modulated free energy.
Furthermore, the dynamics
\eqref{eq:ip-fp} is a constrained gradient flow for
$\mathcal F^N_{\ind}(\cdot|m_*)$.
Specifically,
\begin{equation}
\label{eq:ip-dissipation}
\frac{\dd}{\dd t} \mathcal F^N_{\ind}(\xi^1_t, \ldots, \xi^N_t | m_*)
= - \sum_{i \in [N]} I(\xi^i_t | \Pi[\bar\xi^{-i}_t]).
\end{equation}
See \cite[Section~2.1 and Section~3]{LackerIndependentProjections} for details.

As in the unprojected Langevin case, we investigate the coercivity and
contractivity of the modulated free energy functional $\mathcal F^N_{\ind}$.

\begin{thm}
\label{thm:ip-coercivity}
Let Assumptions~\ref{assu:W} and \ref{assu:T} hold.
Let \eqref{eq:mf-coercivity} hold with $\delta \in (0,1]$.
Let $N \geqslant 2$.
Then for all $\xi^1$, \dots, $\xi^N \in \mathcal P_2(\X)$,
\[
\mathcal F^N_{\ind} (\xi^1,\ldots,\xi^N | m_*)
\geqslant \delta_N \sum_{i \in [N]} H(\xi^i | m_*),
\]
where $\delta_N$ is given by
\[
\delta_N
= \delta - \frac{1+M_W+\ell_W}{N-1}.
\]
\end{thm}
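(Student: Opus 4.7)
The plan is to reduce Theorem~\ref{thm:ip-coercivity} to the mean-field coercivity condition~\eqref{eq:mf-coercivity} applied to the arithmetic average $\bar\xi \coloneqq \frac{1}{N}\sum_{i\in[N]} \xi^i$, and to absorb the discrepancy between $\mathcal F^N_{\ind}$ and $N\mathcal F(\bar\xi|m_*)$ into the claimed $O(1/N)$ defect. Since the independently-projected energy omits the diagonal $i = j$, the tensor identity $\sum_{i,j}\langle W_*,\xi^i\otimes\xi^j\rangle = N^2\langle W_*,\bar\xi^{\otimes 2}\rangle$ produces the rewriting
\[
\mathcal F^N_{\ind}(\xi^1,\ldots,\xi^N\,|\,m_*)
= \sum_{i\in[N]} H(\xi^i|m_*)
+ \frac{N^2}{2(N-1)}\langle W_*,\bar\xi^{\otimes 2}\rangle
- \frac{1}{2(N-1)}\sum_{i\in[N]}\langle W_*,(\xi^i)^{\otimes 2}\rangle,
\]
isolating a ``mean-field'' middle term and a self-interaction tail.

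For the middle term, the identity~\eqref{eq:def-relative-F-centered} gives $\frac{1}{2}\langle W_*,\bar\xi^{\otimes 2}\rangle = \mathcal F(\bar\xi|m_*) - H(\bar\xi|m_*)$, so applying~\eqref{eq:mf-coercivity} at $\nu = \bar\xi$ yields $\langle W_*,\bar\xi^{\otimes 2}\rangle \geqslant -2(1-\delta) H(\bar\xi|m_*)$. Combining this with the convexity of relative entropy, $H(\bar\xi|m_*) \leqslant \frac{1}{N}\sum_i H(\xi^i|m_*)$, gives
\[
\frac{N^2}{2(N-1)}\langle W_*,\bar\xi^{\otimes 2}\rangle
\geqslant -\frac{N(1-\delta)}{N-1}\sum_{i\in[N]} H(\xi^i|m_*).
\]

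The main technical step is a linear bound $|\langle W_*,(\xi^i)^{\otimes 2}\rangle| = |\langle W,(\xi^i-m_*)^{\otimes 2}\rangle| \leqslant 2(M_W+\ell_W) H(\xi^i|m_*)$. Decomposing $W = W^+ - W^-$ per Assumption~\ref{assu:W} and exploiting the positivity of $W^\pm$, I would estimate the left side by $\langle W^+,(\xi^i-m_*)^{\otimes 2}\rangle + \langle W^-,(\xi^i-m_*)^{\otimes 2}\rangle$. For the bounded components, the elementary inequality $\langle U,(\nu-\nu')^{\otimes 2}\rangle \leqslant \frac{\Osc_2 U}{4}\|\nu-\nu'\|_{\TV}^2$ for positive $U$—a direct consequence of writing $\nu - \nu' = c(\mu_+ - \mu_-)$ with $c = \frac{1}{2}\|\nu-\nu'\|_{\TV}$ via Hahn decomposition—combined with Pinsker, yields a contribution $\leqslant 2M_W H(\xi^i|m_*)$. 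For the quadratic components, a coupling-based Taylor expansion gives $|\langle W^{\pm}_{\qd},(\xi^i-m_*)^{\otimes 2}\rangle| \leqslant \|\nabla^2_{1,2} W^\pm_{\qd}\|_\infty W_2^2(\xi^i,m_*)$, and Assumption~\ref{assu:T} converts $W_2^2$ into entropy, producing $\leqslant 2\ell_W H(\xi^i|m_*)$. Consequently $\frac{1}{2(N-1)}\sum_i |\langle W_*,(\xi^i)^{\otimes 2}\rangle| \leqslant \frac{M_W+\ell_W}{N-1}\sum_i H(\xi^i|m_*)$.

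Collecting the three estimates produces
\[
\mathcal F^N_{\ind}(\xi^1,\ldots,\xi^N\,|\,m_*)
\geqslant \biggl[\delta - \frac{(1-\delta) + M_W + \ell_W}{N-1}\biggr]\sum_{i\in[N]} H(\xi^i|m_*),
\]
which is stronger than the stated $\delta_N$ since $1-\delta \leqslant 1$. I do not anticipate any serious conceptual obstacle: because independent projection already enforces factorization, no conditional approximation is needed, and the $O(1/N)$ defect originates solely from the absent diagonal pairs. The most delicate point is that the self-interaction estimate consumes \emph{both} Assumptions~\ref{assu:W} and~\ref{assu:T}, paralleling the role they play in Theorem~\ref{thm:coercivity} for the full Langevin system.
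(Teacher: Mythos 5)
Your proposal is correct and follows essentially the same route as the paper: apply \eqref{eq:mf-coercivity} to $\bar\xi$, use convexity of relative entropy, and absorb the diagonal self-interaction terms via the bound $\lvert\langle W_*,(\xi^i)^{\otimes 2}\rangle\rvert \leqslant 2(M_W+\ell_W)H(\xi^i|m_*)$, which is precisely the last assertion of Lemma~\ref{lem:error-control} (you re-derive it rather than cite it). Your slightly sharper constant $\delta - \frac{(1-\delta)+M_W+\ell_W}{N-1}$ is what the paper's own computation produces before rounding $1-\delta$ up to $1$.
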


\begin{thm}
\label{thm:ip-contractivity}
Let Assumptions~\ref{assu:W}, \ref{assu:T}
and \ref{assu:LS-Pi} hold.
Let \eqref{eq:mf-coercivity} and \eqref{eq:mf-contractivity-1} hold
with the same constant $\delta \in (0,1]$.
Let $N > \delta^{-1}(1+M_W+\ell_W) + 1$.
Then for all $\xi^1$, \dots, $\xi^N \in \mathcal P_2(\X)$,
\[
\sum_{i \in [N]} I(\xi^i | \Pi[\bar\xi^{-i}])
\geqslant 2\lambda_N \mathcal F^N_{\ind} (\xi^1,\ldots,\xi^N | m_*),
\]
where $\lambda_N$ is given by
\[
\frac{\lambda_N}{2\rho_{\Pi}}
= \delta - \frac{2(1+M_W+\ell_W)}{(N-1)\delta-1-M_W-\ell_W}.
\]
\end{thm}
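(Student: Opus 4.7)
The plan is to chain Assumption~\ref{assu:LS-Pi} with the free energy condition~\eqref{eq:mf-contractivity-1} particlewise, then close the argument by an application of the independent-projection coercivity (Theorem~\ref{thm:ip-coercivity}). I first apply LS-Pi to each $\Pi[\bar\xi^{-i}]$ to get $I(\xi^i | \Pi[\bar\xi^{-i}]) \geqslant 2\rho_\Pi H(\xi^i | \Pi[\bar\xi^{-i}])$, and expand the entropy using the centered parameterization~\eqref{eq:def-Pi-centered}:
\[
H(\xi^i | \Pi[\bar\xi^{-i}]) = H(\xi^i | m_*) + \langle W_*, \xi^i \otimes \bar\xi^{-i}\rangle + \log Z(\bar\xi^{-i}),
\]
where $Z(\nu) = \int_{\X} \exp(-\langle W_*(x,\cdot),\nu\rangle)\, m_*(\dd x)$. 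Summing over $i$ and observing that $\sum_i \langle W_*, \xi^i \otimes \bar\xi^{-i}\rangle$ equals twice the interaction part of $\mathcal F^N_{\ind}$ yields the key identity
\[
\sum_{i \in [N]} H(\xi^i | \Pi[\bar\xi^{-i}]) = 2\mathcal F^N_{\ind}(\xi^1,\ldots,\xi^N | m_*) - \sum_{i \in [N]} H(\xi^i | m_*) + \sum_{i \in [N]} \log Z(\bar\xi^{-i}).
\]

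The next step is to lower-bound the $\log Z$ contributions. The tautology $\log Z(\nu) = H(\nu|\Pi[\nu]) - H(\nu|m_*) - \langle W_*, \nu^{\otimes 2}\rangle$ combined with~\eqref{eq:mf-contractivity-1} gives
\[
\log Z(\nu) \geqslant -(1-\delta) H(\nu|m_*) - (1-\delta/2)\langle W_*, \nu^{\otimes 2}\rangle.
\]
Applied at $\nu = \bar\xi^{-i}$ and summed, convexity of $H(\cdot|m_*)$ controls $\sum_i H(\bar\xi^{-i}|m_*) \leqslant \sum_i H(\xi^i|m_*)$, while a direct combinatorial computation yields
\[
\sum_{i \in [N]} \langle W_*, (\bar\xi^{-i})^{\otimes 2}\rangle = \frac{1}{N-1}\sum_{j \in [N]} \langle W_*, (\xi^j)^{\otimes 2}\rangle + \frac{2(N-2)}{N-1}\Bigl[\mathcal F^N_{\ind} - \sum_{j \in [N]} H(\xi^j|m_*)\Bigr].
\]
The diagonal residual $\sum_j \langle W_*, (\xi^j)^{\otimes 2}\rangle$ is bounded using the decomposition $W = W^+ - W^-$ of Assumption~\ref{assu:W}: the bounded components $W_{\bd}^\pm$ via Pinsker applied after the double-oscillation bound, and the quadratic components $W_{\qd}^\pm$ via the Talagrand inequality of Assumption~\ref{assu:T} together with the mixed-derivative bound, which jointly yield $|\langle W_*, (\xi^j - m_*)^{\otimes 2}\rangle| \leqslant 2(M_W + \ell_W) H(\xi^j | m_*)$.

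Collecting all estimates produces an inequality of the form $\sum_i H(\xi^i | \Pi[\bar\xi^{-i}]) \geqslant a_N \mathcal F^N_{\ind} - b_N \sum_j H(\xi^j|m_*)$ for explicit $a_N, b_N$ depending on $\delta, M_W, \ell_W, N$. A final application of Theorem~\ref{thm:ip-coercivity}, which substitutes $\sum_j H(\xi^j|m_*) \leqslant \mathcal F^N_{\ind}/\delta_N$ with $\delta_N = \delta - (1+M_W+\ell_W)/(N-1)$, absorbs the residual entropy into $\mathcal F^N_{\ind}$, and the conclusion follows after multiplication by $2\rho_\Pi$. The hypothesis $N > \delta^{-1}(1+M_W+\ell_W)+1$ is precisely what ensures $\delta_N > 0$, and the pole at this threshold propagates into the singular denominator of the stated $\lambda_N$. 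The main technical obstacle is the bookkeeping: because the $\log Z$ lower bound reintroduces the interaction part $\mathcal F^N_{\ind} - \sum_j H(\xi^j|m_*)$ with a coefficient of order one, it must be carefully recombined with the leading $2\mathcal F^N_{\ind}$ term before the single coercivity inversion, which is what extracts the precise numerator $2(1+M_W+\ell_W)$ of $\lambda_N$. In contrast with the general contractivity result (Theorem~\ref{thm:contractivity-1}), no additive $\Delta_I$ term remains, reflecting the fully tensorized structure preserved by the independently projected dynamics~\eqref{eq:ip-fp}.
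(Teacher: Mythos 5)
Your proposal is correct and follows essentially the same route as the paper: apply Assumption~\ref{assu:LS-Pi}, expand $H(\xi^i|\Pi[\bar\xi^{-i}])$ in the centered parameterization, lower-bound the $\log Z$ terms via \eqref{eq:mf-contractivity-1} evaluated at $\bar\xi^{-i}$ together with convexity of entropy, use the same combinatorial identity for $\sum_i\langle W_*,(\bar\xi^{-i})^{\otimes 2}\rangle$, control the diagonal residual by Lemma~\ref{lem:error-control}, and close with Theorem~\ref{thm:ip-coercivity}. The only (inessential) difference is that the paper invokes Theorem~\ref{thm:ip-coercivity} twice—once to crudely bound the off-diagonal interaction by $-\sum_i H(\xi^i|m_*)$ and once at the end—whereas your single exact recombination followed by one coercivity inversion in fact yields a marginally larger constant than the stated $\lambda_N$, which of course still implies the theorem.
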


The proofs of Theorems~\ref{thm:ip-coercivity} and~\ref{thm:ip-contractivity}
will be presented in Section~\ref{sec:ip}.
These proofs are analogous to those of Theorems~\ref{thm:coercivity}
and~\ref{thm:contractivity-1}, but are considerably simpler:
particle independence removes the need to
approximate the empirical measure by conditional ones,
and thus eliminates the associated error estimates.

Combining \eqref{eq:ip-dissipation} with Theorems~\ref{thm:ip-coercivity}
and~\ref{thm:ip-contractivity} immediately yields exponential entropy
convergence for the projected dynamics.

\begin{cor}
Under the assumptions and notations of Theorem~\ref{thm:ip-contractivity},
the independently projected Langevin dynamics \eqref{eq:ip-fp} satisfies
\[
\sum_{i \in [N]} H(\xi^i_t| m_*)
\leqslant \delta_N^{-1} e^{-2\lambda_N t}
\mathcal F^N_{\ind} (\xi^1_0, \ldots, \xi^N_0 | m_*),
\]
where $\delta_N$, $\lambda_N$ are defined in
Theorems~\ref{thm:ip-coercivity} and \ref{thm:ip-contractivity}, respectively.
\end{cor}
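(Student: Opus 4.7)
The plan is to combine the three ingredients provided by the preceding results: the dissipation identity \eqref{eq:ip-dissipation} for the modulated free energy along the independently projected flow, the Polyak--\L ojasiewicz type contractivity of Theorem~\ref{thm:ip-contractivity}, and the entropy coercivity of Theorem~\ref{thm:ip-coercivity}. The strategy is the standard Grönwall argument: contractivity converts the dissipation identity into an exponential decay of $\mathcal F^N_{\ind}$, and coercivity then transfers this decay to the sum of marginal relative entropies.

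Concretely, I would first substitute the lower bound of Theorem~\ref{thm:ip-contractivity} into \eqref{eq:ip-dissipation} to obtain the differential inequality
\[
\frac{\dd}{\dd t} \mathcal F^N_{\ind}(\xi^1_t,\ldots,\xi^N_t|m_*)
\leqslant -2\lambda_N\, \mathcal F^N_{\ind}(\xi^1_t,\ldots,\xi^N_t|m_*),
\]
valid for all $t \geqslant 0$. Grönwall's lemma then yields
\[
\mathcal F^N_{\ind}(\xi^1_t,\ldots,\xi^N_t|m_*)
\leqslant e^{-2\lambda_N t}\, \mathcal F^N_{\ind}(\xi^1_0,\ldots,\xi^N_0|m_*).
\]
Applying Theorem~\ref{thm:ip-coercivity} on the left-hand side, with constant $\delta_N = \delta - (1+M_W+\ell_W)/(N-1)$, finishes the proof after dividing by $\delta_N$. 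The hypothesis $N > \delta^{-1}(1+M_W+\ell_W)+1$ in Theorem~\ref{thm:ip-contractivity} guarantees $\delta_N > 0$, so the division is legitimate, and it also ensures that the hypotheses of Theorem~\ref{thm:ip-coercivity} (namely Assumptions~\ref{assu:W}, \ref{assu:T} and \eqref{eq:mf-coercivity} with $N \geqslant 2$) are met.

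The only real subtlety, which is not an obstacle in the present regular setting but deserves a brief justification, is that the formal chain-rule computation leading to \eqref{eq:ip-dissipation} is valid along the flow \eqref{eq:ip-fp}. This can be taken from \cite[Section~2.1 and Section~3]{LackerIndependentProjections} and is already cited when \eqref{eq:ip-dissipation} is stated. Everything else is essentially bookkeeping: there is no need to invoke the conditional approximation machinery here, because independence of the factors $\xi^i_t$ is built into the projected dynamics and is what makes both Theorem~\ref{thm:ip-coercivity} and Theorem~\ref{thm:ip-contractivity} go through without error terms of the type $\Delta_{\mathcal F}$ or $\Delta_I$ that appear in the unprojected case.
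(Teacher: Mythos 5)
Your proposal is correct and follows exactly the route the paper intends: substitute the contractivity bound of Theorem~\ref{thm:ip-contractivity} into the dissipation identity \eqref{eq:ip-dissipation}, apply Grönwall's lemma, and then use the coercivity of Theorem~\ref{thm:ip-coercivity} (with $\delta_N>0$ guaranteed by the hypothesis on $N$) to pass to the sum of marginal entropies.
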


\subsection{Future directions}
\label{sec:mr-future-directions}

\subsubsection*{Optimal relaxation rate at all scales}

While Theorem~\ref{thm:contractivity-2} establishes near-optimal
large-scale relaxation rates, the method for achieving the optimal
relaxation rate at all scales, or equivalently, the best
log-Sobolev constant, remains unknown. The one-step renormalization approach
of \textcite{BauerschmidtBodineauSimple} appears insufficient for obtaining the
optimal constant, as Section~\ref{sec:exm-xy} below demonstrates for the
mean-field XY model.

Moreover, the validity of the conjecture by \textcite{DGPSPhase} remains open.
As the authors note, the mean-field limit
provides only a \emph{coarse-grained} view of the $N$-particle system.
Condition~\eqref{eq:mf-pl} alone may
therefore fail to yield relaxation rates at small scales,
since such information may be lost in the coarse graining process.

\subsubsection*{Renormalization group perspective}

It was brought to the author's notice that the construction
of the conditional approximation bears similarity to
the \emph{coordinate localization} scheme
of \textcite{ChenEldanLocalizationSchemes}.
This scheme proposes the gradual decomposition
\[
m^N_* = \Expect_{\vect X \sim m^N_*} [ \delta_{\vect X^{[k-1]}}
\otimes \Law(\vect X^{\llbracket k, N\rrbracket} | \vect X^{[k-1]}) ],
\qquad k \in [N],
\]
where, in renormalization group terminology,
$\delta_{\vect X^{[k-1]}}$ serves as the macroscopic renormalized field,
and $\Law(\vect X^{\llbracket k, N\rrbracket} | \vect X^{[k-1]})$ serves as
the microscopic fluctuation measure.
The proof of log-Sobolev inequality for $m^N_*$ then relies on
the \emph{entropic stability} across levels $k \in [N]$.

It remains unclear how to interpret the proofs of
Theorems~\ref{thm:contractivity-1} and~\ref{thm:contractivity-2} in
the language of renormalization group.
Our method necessarily introduces error terms when alternating
between $\mu_{\vect X}$ and $\bar\nu$.
Consequently, any such interpretation would lead
to an inexact renormalization procedure,
more akin to Kadanoff's block-spin approach~\cite{KadanoffScaling}
than to the Polchinski flow \cite{PolchinskiRenormalization}.

\subsubsection*{Non-exchangeable interaction}

The conditional approximation approach can be adapted
to non-exchangeable interactions.
Specifically, instead of \eqref{eq:ps-sde}, we consider
\[
\dd X^i_t = - \nabla V(X^i_t) \dd t
- \sum_{j \in [N]} a_{ij} \nabla_1 W(X^i_t, X^j_t) \dd t
+ \sqrt 2 \dd B^i_t,
\]
where $a_{ij} \geqslant 0$.
This adaptation requires comparing the measures
\[
\sum_{j \in [N]} a_{ij} \delta_{X^j},
\quad \sum_{j \in [N]} a_{ij} \nu_j.
\]
By the martingale argument of Lemma~\ref{lem:approx-1},
for all bounded kernel $U \colon \X \times \X \to \R$,
\[
\Expect \biggl[
\biggl< U, \biggl( \sum_{j \in [N]} a_{ij} \delta_{X^j}
- \sum_{j \in [N]} a_{ij} \nu_j \biggr)^{\!\otimes 2} \biggr> \biggr]
= O\biggl( \sum_{j \in [N]} a_{ij}^2 \biggr).
\]
The approximation is therefore accurate when the right-hand side is small.
This smallness condition also appears in recent studies of non-exchangeable
diffusions~\cite{LYZNonExchangeable}
and stochastic differential games~\cite{CJRNonAsymptotic}.
We leave the detailed analysis of this extension
as an open direction for future work.

\section{Examples}
\label{sec:exm}

This section presents examples illustrating the scope and limitations
of the results in Section~\ref{sec:mr}.
Section~\ref{sec:exm-xy} introduces the mean-field XY model,
where all results from Section~\ref{sec:mr}
hold throughout its subcritical regime.
Section~\ref{sec:exm-cw} introduces the supercritical double-well
Curie--Weiss model with an external magnetic field,
where the coercivity holds but the contractivity does not.

\subsection{Mean-field XY model}
\label{sec:exm-xy}

In the mean-field XY model, $\X = \T = \R / 2\pi \mathbb Z$ and
\[
V = 0,\quad W(\theta,\theta') = - J \cos(\theta - \theta'),
\]
where $J > 0$.
By translation invariance, the uniform measure
\[
m_*(\dd\theta) = \frac{\dd\theta}{2\pi}
\]
is invariant to the Langevin dynamics \eqref{eq:mf-fp}.

We first verify the regularity and functional inequality framework
in Section~\ref{sec:mr-reg-fi}.
The vector-valued function
\[
r(\theta) = \begin{pmatrix} \cos \theta \\ \sin \theta \end{pmatrix}
\]
provides a spectral decomposition for $W$:
\[
W(\theta,\theta') = -Jr(\theta) \cdot r(\theta')
= -J ( \cos\theta\cos\theta' + \sin\theta\sin\theta' ).
\]
In particular $W$ is a negative kernel.
According to Remark~\ref{rem:bbd-spectral}, by setting
\[
W^+ = 0,\quad W^- = -W,\quad R = - JW,
\]
Assumptions~\ref{assu:W} and \ref{assu:R} are verified
with the constants
\[
M_W = J,\quad M_R = J^2,\quad L_W = L_R = 0.
\]
The uniform measure $m_*$ verifies Assumption~\ref{assu:LS} with $\rho = 1$.
Let $\nu \in \mathcal P(\T)$.
The associated local equilibrium reads
\[
\Pi[\nu] (\dd \theta) \propto
\exp \bigl( J \langle r, \nu\rangle \cdot r(\theta) \bigr)\,
m_*(\dd \theta),
\]
where $\Osc J \langle r,\nu\rangle \cdot r(\cdot) \leqslant 2J$.
Applying the Holley--Stroock perturbation result \cite{HolleyStroockLSI}
then verifies Assumption~\ref{assu:LS-Pi} with $\rho_\Pi = e^{-2J}$.

For $J < 2$, using \cite[Theorem~D.2]{DLSPhase},
we can verify the conditions of Lemma~\ref{lem:xy-perturbed-coer-fe}
in Appendix~\ref{app:mode-decomposition}.
When $m = m_*$, the lemma yields, in particular,
conditions \eqref{eq:mf-coercivity} and \eqref{eq:mf-contractivity-1} with
\[
\delta = 1 - \frac J2.
\]
This constant is optimal according to Remark~\ref{rem:optimality-delta}.
For $J > 2$, the limit dynamics \eqref{eq:mf-fp}
possesses multiple invariant measures with equal free energy.
For $J = 2$, the model is critical,
and \eqref{eq:mf-coercivity} and \eqref{eq:mf-contractivity-1}
no longer capture the leading-order behavior.
Both cases are beyond the scope of this paper.
We therefore focus on the case $J < 2$ in the sequel.

At this stage, we have verified
the assumptions of all results from Section~\ref{sec:mr}
except for Theorem~\ref{thm:goc}.
In particular, Theorem~\ref{thm:coercivity} holds with
\begin{alignat*}{2}
\delta_N &= (1-\varepsilon) \delta, &\quad
\quad \Delta_{\mathcal F} &\leqslant 4
\biggl(1+\frac{2}{\delta\varepsilon}\biggr)
\intertext{and Theorem~\ref{thm:contractivity-1} with}
\frac{\lambda_N}{\rho_\Pi} &= (1-\varepsilon) \delta,&\quad
\frac{\Delta_I}{2\rho_\Pi} &\leqslant
96 + \frac{32e^4}{\varepsilon} + \frac{960}{\delta^2\varepsilon}.
\end{alignat*}
The coercivity constant $\delta_N$ is near-optimal
as argued in Section~\ref{sec:mr-coercivity}.
The contractivity constant $\lambda_N$ appears suboptimal,
with the loss arising from uniform log-Sobolev constant $\rho_\Pi$.
Nonetheless, we find $\lambda_N \sim 1 - J/2$ up to a multiplicative constant,
which is the expected critical behavior.
Moreover, the result of \textcite[Theorem~1]{BauerschmidtBodineauSimple}
gives the log-Sobolev constant
\[
\rho_\Pi \biggl( 1 + \frac{2J}{\delta} \biggr)^{\!-1}.
\]
Thus, when $J$ is close to $2$,
Theorem~\ref{thm:contractivity-1} achieves a multiplicative improvement
of around $4$ in the large-scale relaxation rate.
Theorem~\ref{thm:contractivity-2} would further
yield near-optimal large-scale relaxation
for symmetric particle distributions, but the author is not aware of
an explicit expression for the best constant in \eqref{eq:mf-contractivity-2}.

Combining standard global-in-time propagation of chaos
with Theorem~\ref{thm:goc}
yields the following entropic generation of chaos estimate.
However, verifying the assumptions of Theorem~\ref{thm:goc} requires extra work,
so we defer the proof to Appendix~\ref{app:xy}.

\begin{prop}
\label{prop:xy-goc}
There exists $C_J > 0$, depending only on $J \in (0,2)$, such that
for all $\varepsilon \in (0,1)$
and all $t \geqslant 0$,
\[
H(m^N_t | m_t^{\otimes N})
\leqslant C_J
e^{-2(1-\varepsilon)\delta\rho_{\Pi}t}
H(m^N_0 | m_0^{\otimes N}) + \frac{C_J}{(1-\varepsilon)\varepsilon}.
\]
\end{prop}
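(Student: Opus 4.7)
I apply Theorem~\ref{thm:goc}, whose hypotheses at every $t \geqslant 0$ demand \eqref{eq:Coer-m} and \eqref{eq:FE-m} at $m = m_t$, a log-Sobolev inequality for $m_t$, a uniform log-Sobolev inequality for the images of $\Pi_{m_t}$, and the advection error bound \eqref{eq:goc-error}. The plan is to verify all of these with constants depending only on $J$, splitting the time axis at some $T_J > 0$ large enough that $m_t$ is in a perturbative neighborhood of $m_*$.

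First, the limit flow converges exponentially: combining \eqref{eq:mf-contractivity-1} with Assumption~\ref{assu:LS-Pi} yields \eqref{eq:mf-contractivity-2} with $\lambda = \delta\rho_\Pi$, hence $\mathcal F(m_t | m_*) \leqslant e^{-2\delta\rho_\Pi t}\mathcal F(m_0 | m_*)$, and Assumption~\ref{assu:T} turns this into Wasserstein convergence. Parabolic regularity for \eqref{eq:mf-fp} upgrades this to exponentially small smooth proximity between $m_t$ and the uniform $m_*$. For $t \geqslant T_J$, this closeness is enough that Lemma~\ref{lem:xy-perturbed-coer-fe} delivers \eqref{eq:Coer-m} and \eqref{eq:FE-m} with a common constant $\delta' > 0$ depending only on $J$, while Holley--Stroock perturbation applied to Assumptions~\ref{assu:LS} and~\ref{assu:LS-Pi} supplies uniform log-Sobolev constants for $m_t$ and the images of $\Pi_{m_t}$.

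For the advection error, the vector field $v_t = -\nabla\log(m_t/\Pi[m_t])$ vanishes at $m_*$ and decays with $\|m_t - m_*\|$, so $(y,z)\mapsto \nabla_1 W(y,z)\cdot v_t(z) = J\sin(y-z)v_t(z)$ is a bounded kernel whose oscillation is $O(\|v_t\|_\infty)$. Splitting it into its positive and negative parts via polarization and applying Corollary~\ref{cor:jw} controls the left-hand side of \eqref{eq:goc-error} by $\gamma(t)H(m^N_t | m_t^{\otimes N}) + M(t)$ with $\gamma(t), M(t) = O(\|v_t\|_\infty/N)$. Theorem~\ref{thm:coercivity} applied around $m_t$ then converts the entropy into $\mathcal F^N(m^N_t | m_t)$ up to an additive $J$-dependent loss, yielding \eqref{eq:goc-error} in the required form.

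On $[0, T_J]$, a classical bounded-kernel propagation of chaos argument (e.g.\ a Girsanov change of measure, or a Jabin--Z.~Wang modulated entropy estimate) gives $H(m^N_{T_J} | m_{T_J}^{\otimes N}) \leqslant C_J\bigl(H(m^N_0 | m_0^{\otimes N}) + 1\bigr)$. For $t \geqslant T_J$, Theorem~\ref{thm:goc} restarted at $T_J$ combined with the coercivity of Theorem~\ref{thm:coercivity} (to convert $\mathcal F^N$ back to $H$ with an additive loss) produces the claimed bound; the $e^{2(1-\varepsilon)\delta\rho_\Pi T_J}$ loss from shifting the initial time is absorbed into $C_J$. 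The main obstacle is the second step: rigorously translating the smooth proximity $m_t \to m_*$ into uniform perturbative constants for all four mean-field conditions and for the decay of $\|v_t\|_\infty$, which forms the bulk of the work deferred to Appendix~\ref{app:xy}.
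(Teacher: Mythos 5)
Your proposal follows essentially the same route as the paper's proof in Appendix~\ref{app:xy}: a Jabin--Z.~Wang global-in-time propagation of chaos estimate on a finite window, exponential smoothing and decay of $m_t$ towards the uniform measure $m_*$ so that Lemma~\ref{lem:xy-perturbed-coer-fe} and Holley--Stroock furnish the hypotheses of Theorem~\ref{thm:goc} with $J$-dependent constants, polarization of $\sin(\theta-\theta')v_t(\theta')$ plus Corollary~\ref{cor:jw} and Theorem~\ref{thm:coercivity} for the advection error \eqref{eq:goc-error}, and a restart of the contraction at $T_J$.

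One point where the order of operations matters for getting a constant depending only on $J$: you propose to start from $\mathcal F(m_t|m_*)\leqslant e^{-2\delta\rho_\Pi t}\mathcal F(m_0|m_*)$ and then upgrade by parabolic regularity, but $\mathcal F(m_0|m_*)$ may be infinite or arbitrarily large, so the time $T_J$ at which $m_t$ enters the perturbative neighborhood would then depend on $m_0$. The paper inverts the two steps: the trivial a priori bound $\lVert m_t-m_*\rVert_{\TV}\leqslant 1$ feeds a one-step Duhamel regularization (Lemma~\ref{lem:xy-decay-short-time}) giving $\lVert\partial_\theta m_1\rVert_{L^\infty}\leqslant C$ with a universal constant, hence $\mathcal F(m_1|m_*)\leqslant C$, and only then is the free-energy dissipation run from $t=1$. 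With that reordering (which you implicitly flag as the deferred ``bulk of the work''), your argument matches the paper's.
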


The novelty of this result lies in the fact that the last term is $O(1)$ as
$N\to\infty$. The log-Sobolev inequality for the Gibbs measure $m^N_*$,
as established by \textcite{BauerschmidtBodineauSimple}, yields contraction only
for $H(m^N_t | m^N_*)$ and thus does not establish chaoticity on short time
scales. In contrast, global-in-time propagation of chaos provides chaoticity
estimates within finite time horizons, but these estimates deteriorate
exponentially as time increases. While combining both approaches yields
uniform-in-time chaoticity, this comes at the cost of a significantly degraded
convergence rate in $N$, typically much worse than $O(1)$; see
\textcite[Corollary~5]{GuillinMonmarcheKinetic} and an earlier work
of the author~\cite[Corollary~2.8]{ulpoc}.

The supercritical regime $J > 2$, in which \eqref{eq:mf-fp} admits
a circle of single-peaked stationary solutions, lies beyond the scope of
the present analysis and has been investigated in depth
by \textcite{BGPSynchronization},
who characterize the long-time random motion on this manifold of
invariant measures.

A natural direction for future work is the multi-mode generalization
of the present model on the circle, the noisy Kuramoto--Daido model,
which has been recently studied at the mean-field level by
\textcite{MunRosenzweigPhaseTransitions};
combining their analysis with the main results of the present paper
should yield analogous large-scale estimates
for the associated $N$-particle system in the subcritical regime.

\subsection{Double-well Curie--Weiss model}
\label{sec:exm-cw}

In the double-well Curie--Weiss model, $\X = \R$ and
\[
V(x) = V_h(x) = \frac{\theta x^4}{4} - \frac{\sigma x^2}{2} - hx,\quad
W(x) = - Jxy,
\]
where $\sigma$, $h \in \R$ and $\theta$, $J > 0$.
The mode decomposition in Remark~\ref{rem:bbd-spectral} becomes obvious:
$r(x) = x$.
Accordingly, by setting
\[
W^+ = 0,\quad W^- = -W,\quad R = -JW,
\]
Assumptions~\ref{assu:W} and \ref{assu:R} are verified with the constants
\[
L_W = J,\quad L_R = J^2,\quad M_W=M_R = 0.
\]
Assumption~\ref{assu:LS-Pi} can be established by various methods;
see \cite[Proof of Proposition~6]{socgibbs} and
\cite[Assumption~1.1]{BBDCriterionFreeEnergy}.

For $h = 0$, the measure
with density proportional to $e^{-V_0}$ is invariant.
In this case, denote $m_*(\dd x) \propto e^{-V_0(x)} \dd x$.
Define
\[
\Jc = \frac{\int_{\R} e^{-V_0(x)} \dd x}{\int_{\R} x^2 e^{-V_0(x)} \dd x};
\]
in other words, $\Jc$ is the inverse of the variance of $m_*$.
The potential $V_0$ belongs to the Griffiths--Hurst--Sherman (GHS) class
so the conditions of Lemma~\ref{lem:xy-perturbed-coer-fe}
in Appendix~\ref{app:mode-decomposition} are verified;
see \cite{EMNGHS}.
For $J < \Jc$, applying the lemma yields
\eqref{eq:mf-coercivity} and \eqref{eq:mf-contractivity-1}
with $\delta = 1 - J/\Jc$,
a constant that Remark~\ref{rem:optimality-delta} shows to be optimal.
Consequently, all results from Section~\ref{sec:mr}
except Theorem~\ref{thm:goc} hold.
In particular, Theorems~\ref{thm:coercivity} and \ref{thm:contractivity-1}
yield, respectively, large-scale concentration and relaxation
with the expected critical behavior.
As in Section~\ref{sec:exm-xy},
the author is not aware of an explicit expression for the best constant
in \eqref{eq:mf-contractivity-2},
so the result of Theorem~\ref{thm:contractivity-2} remains inexplicit.
Verifying the assumptions of Theorem~\ref{thm:goc}
seems challenging to the author, as the unbounded setting
complicates the necessary analytic estimates on the dynamics~\eqref{eq:mf-fp}.

In the sequel, we are instead interested in the supercritical phase
with a non-vanishing external field.
To be precise, define $f \colon \R \to \R$ by
\[
f(\ell) = \log \int_{\R} e^{\ell x-V_0(x)} \dd x.
\]
We consider the parameter regime
\begin{equation}
\label{eq:cw-regime}
J > \Jc,\quad 0 < h < \hc(J),
\end{equation}
where $\hc(J) = Jf'\bigl(\ell(J)\bigr) - \ell(J)$,
and $\ell(J)$ is the unique positive number satisfying
$f''\bigl(\ell(J)\bigr) = 1/J$.
The uniqueness of such $\ell(J)$ is guaranteed by the GHS property
$f'''(\ell) < 0$ for $\ell > 0$.
In this regime, the dynamics \eqref{eq:mf-fp}
admits exactly three invariant measures,
denoted $m_-$, $m_{\neu}$ and $m_+$, and satisfying
\[
m_s(\dd x) \propto e^{\ell_s x-V_0(x)} \dd x,
\qquad\text{$s = -$, $\neu$, $+$,}
\]
with $\ell_- < \ell_{\neu} < \ell_+$.
Furthermore, these numbers solve the self-consistency equation
\begin{equation}
\label{eq:self-consistency}
\ell = h + J f'(\ell).
\end{equation}
We choose $m_* = m_+$.
Lemma~\ref{lem:cw-coercive} in Appendix~\ref{app:cw} verifies the condition
\eqref{eq:mf-coercivity} for some $\delta > 0$.
Consequently the coercivity results from Section~\ref{sec:mr-coercivity} hold.
In particular, by Corollary~\ref{cor:entropy-coercivity},
\[
H(m^N_* | m_+^{\otimes N}) = O(1)
\]
and $m^N_*$ exhibits Gaussian concentration with an $N$-independent constant.
However, the contractivity conditions \eqref{eq:mf-contractivity-1}
and \eqref{eq:mf-contractivity-2} must fail.
Indeed, by taking $\nu = m_-$, $m_{\neu}$,
\[
I(\nu | \Pi[\nu]) = H(\nu | \Pi[\nu]) = 0,
\]
while
\[
\mathcal F(\nu | m_+) \geqslant \delta H(\nu | m_+) > 0.
\]
Moreover, by the optimality argument for Theorem~\ref{thm:contractivity-2},
the failure of \eqref{eq:mf-contractivity-2}
excludes any exponential convergence rate
for the $N$-particle dynamics \eqref{eq:ps-fp}
that holds independently of $N$ and the initial condition.

Nonetheless, restricting the initial condition recovers large-scale
exponential convergence for the particle system.
\textcite[Proposition~21]{MonmarcheReygnerLocal} demonstrated that
if $m^N_0 = m_0^{\otimes N}$ and $m_t$ converges to $m_+$ as $t\to\infty$,
then there exist $C$, $c > 0$ and $\beta \in (0,1)$ such that
for all $N\geqslant 1$ and $t \geqslant 1$,
\[
\frac 1N \mathcal F^N(m^N_t | m_+)
= \frac 1N \mathcal F^N(m^N_t) - \mathcal F(m_+)
\leqslant C \biggl(N^{-\beta} + e^{-ct}\biggr).
\]
Applying Theorem~\ref{thm:coercivity} and enlarging $C$ if necessary,
we obtain
\[
\frac 1N H(m^N_t | m_+^{\otimes N})
\leqslant C(N^{-\beta} + e^{-ct}).
\]
Combining this with global-in-time propagation of chaos estimates
on finite horizons yields the following uniform-in-time bound.

\begin{prop}
\label{prop:cw-uniform-poc}
Assume the parameter regime~\eqref{eq:cw-regime}
and let $m_0 \in \mathcal P_2(\R)$ have $H(m_0 | m_+) < \infty$.
Suppose the solution $(m_t)_{t \geqslant 0}$
of~\eqref{eq:mf-fp} starting from $m_0$ converges to $m_+$ as $t\to\infty$,
and suppose further that $m^N_0 = m_0^{\otimes N}$.
Then there exist constants $C > 0$ and $\beta \in (0, 1)$,
depending on $h$, $J$, $\sigma$, $\theta$ and $m_0$,
such that for all $N \geqslant 1$ and $t \geqslant 0$,
\[
\frac{1}{N} W_2^2(m^N_t, m_t^{\otimes N}) \leqslant C N^{-\beta}.
\]
\end{prop}

\begin{proof}[Sketch of proof of Proposition~\ref{prop:cw-uniform-poc}]
By Assumption~\ref{assu:LS-Pi}, $m_+$ satisfies a Talagrand inequality,
so the displayed bound above transfers to
\[
\frac 1N W_2^2(m^N_t, m_+^{\otimes N}) \leqslant C ( N^{-\beta} + e^{-ct} ).
\]
On any compact time interval $[0,T]$, standard global-in-time
Wasserstein propagation of chaos \cite{SznitmanPOC} ensures
\[
\sup_{t \in [0,T]} \frac 1N W_2^2(m^N_t, m_t^{\otimes N}) \leqslant
\frac{C(T)}{N}.
\]
Moreover, under the present hypotheses, \cite{MonmarcheReygnerLocal}
also provides exponential convergence $m_t \to m_+$ in $W_2$
at some rate $c' > 0$, which controls
$N^{-1}W_2^2(m^N_t, m_t^{\otimes N})$
in terms of $N^{-1}W_2^2(m^N_t, m_+^{\otimes N})$
up to an additive error $e^{-c't}$.
Choosing $T \asymp \log N$ to balance these regimes
yields the claimed bound after possibly reducing the exponent $\beta$.
\end{proof}

To the author's knowledge, this is the first result establishing
uniform-in-time propagation of chaos in the presence of multiple
invariant measures.

As a final remark, if $m^N_0 = m_0^{\otimes N}$ but $m_t \to m_-$ instead,
the $N$-particle system is expected to exhibit \emph{metastablility}:
$m^N_t$ closely tracks $m_t^{\otimes N}$ up to
some exponential-in-$N$ time scale, then relaxes slowly towards $m^N_*$.
Addressing such metastable behavior lies beyond the scope of our main results.
For recent progress in this direction, we refer readers to
\textcite[Theorem~3.1]{DelarueTseUniformPOC},
\textcite{JournelLeBrisUniform},
and \textcite{MonmarcheLongTimePOC}.

\section{Conditional approximation}
\label{sec:approx}

In this section, we introduce the conditional approximation,
which is the central technique of this work.
The idea is to decompose the joint law of the $N$-particle system
into a sequence of conditional distributions,
one for each particle given the preceding ones.
This construction can be motivated by the chain rule for entropy,
which expresses the joint entropy of an $N$-particle system
as the sum of conditional entropies;
see \cite[Theorem~2.6]{BudhirajaDupuisAnalysis}.

Let $\nu^N \in \mathcal P_2(\X^N)$ be arbitrary,
and let $\vect X = (X^1,\ldots,X^N)$ be distributed according to $\nu^N$.
Define the filtration $\mathcal F_k = \sigma(X^1,\ldots,X^k)$
for $k \in [N]$, with $\mathcal F_0$ denoting the trivial $\sigma$-algebra.
Introduce the following sequence of random conditional measures:
\[
\nu_k = \Law(X^k | X^1, \ldots, X^{k-1})
= \Law(X^k | \mathcal F_{k-1}),\quad k \in [N].
\]
We denote also:
\[
\bar \nu = \frac 1N \sum_{k \in [N]} \nu_k.
\]
Throughout this section, $U \colon \X \times \X \to \R$
denotes a symmetric kernel function with quadratic growth.

The first lemma shows that $\bar\nu$ approximates
$\mu_{\vect X}$ to order $1/N$ in a weak sense.

\begin{lem}
\label{lem:approx-1}
The following equality holds:
\[
\Expect [ \langle U,
(\mu_{\vect X} - \bar\nu)^{\otimes 2}
\rangle ]
= \frac 1{N^2} \sum_{k \in [N]}
\Expect [\langle U, (\delta_{X^k} - \nu_k)^{\otimes 2}\rangle].
\]
If in addition $U$ is a positive kernel, then
for all $\varepsilon > 0$,
\begin{align*}
\Expect [ \langle U, \mu_{\vect X}^{\otimes 2}\rangle ]
&\leqslant (1+\varepsilon) \Expect [ \langle U, \bar\nu^{\otimes 2}\rangle]
+ \frac{1+\varepsilon^{-1}}{N^2} \sum_{k \in [N]}
\Expect [\langle U, (\delta_{X^k} - \nu_k)^{\otimes 2}\rangle], \\
\Expect [ \langle U, \bar\nu^{\otimes 2}\rangle ]
&\leqslant (1+\varepsilon)
\Expect [ \langle U, \mu_{\vect X}^{\otimes 2}\rangle]
+ \frac{1+\varepsilon^{-1}}{N^2} \sum_{k \in [N]}
\Expect [\langle U, (\delta_{X^k} - \nu_k)^{\otimes 2}\rangle].
\end{align*}
\end{lem}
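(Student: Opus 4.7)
The plan is to prove the identity via martingale orthogonality and the two inequalities via Cauchy--Schwarz together with Young's inequality.

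For the first statement, I would expand
\[
\mu_{\vect X} - \bar\nu = \frac 1N \sum_{k \in [N]} (\delta_{X^k} - \nu_k)
\]
and write $\langle U, (\mu_{\vect X} - \bar\nu)^{\otimes 2}\rangle$ as a double sum over pairs $(j,k) \in [N]^2$. For an off-diagonal term with, say, $j < k$, I would condition on $\mathcal F_{k-1}$: the factor $\delta_{X^j} - \nu_j$ is $\mathcal F_{k-1}$-measurable, whereas the defining identity $\nu_k = \Law(X^k \mid \mathcal F_{k-1})$ forces $\Expect[\delta_{X^k} - \nu_k \mid \mathcal F_{k-1}] = 0$. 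The conditional expectation therefore annihilates the inner pairing with $\delta_{X^k} - \nu_k$, so that term vanishes in unconditional expectation as well. Symmetry of $U$ handles the case $j > k$, leaving only the $N$ diagonal contributions.

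For the inequalities, I would start from the decomposition $\mu_{\vect X} = \bar\nu + (\mu_{\vect X} - \bar\nu)$ and, using the symmetry of $U$, expand
\[
\langle U, \mu_{\vect X}^{\otimes 2}\rangle
= \langle U, \bar\nu^{\otimes 2}\rangle
+ 2 \langle U, \bar\nu \otimes (\mu_{\vect X} - \bar\nu)\rangle
+ \langle U, (\mu_{\vect X} - \bar\nu)^{\otimes 2}\rangle.
\]
Positivity of $U$ makes the symmetric bilinear form $B(\mu,\nu) \coloneqq \langle U, \mu \otimes \nu\rangle$ positive semi-definite on the relevant subspace, so Cauchy--Schwarz followed by Young's inequality gives
\[
2 \lvert\langle U, \bar\nu \otimes (\mu_{\vect X} - \bar\nu)\rangle\rvert
\leqslant \varepsilon \langle U, \bar\nu^{\otimes 2}\rangle
+ \varepsilon^{-1} \langle U, (\mu_{\vect X} - \bar\nu)^{\otimes 2}\rangle.
\]
Taking expectation and substituting the identity from the first part for the final summand then delivers the first inequality. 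The second inequality follows from the same argument after interchanging the roles of $\mu_{\vect X}$ and $\bar\nu$.

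The main step requiring care is the Cauchy--Schwarz bound on the cross term: positivity as defined in the paper directly yields semi-definiteness only on zero-mass signed measures, whereas $\bar\nu$ has unit mass, so PSD of $B$ on the two-dimensional span of $\bar\nu$ and $\mu_{\vect X} - \bar\nu$ is not automatic. I expect this to be resolved either by invoking the stronger rank-one-type decomposition of $U$ that the concrete kernels $W^\pm$ and $R$ enjoy in the paper's applications, or equivalently by applying the argument to the reduced kernel $U_m$ for a reference probability $m$: the identity $\langle U_m, \nu^{\otimes 2}\rangle = \langle U, (\nu - m)^{\otimes 2}\rangle$ makes $U_m$ positive semi-definite on all probability measures, and $\langle U_m, (\delta_{X^k} - \nu_k)^{\otimes 2}\rangle = \langle U, (\delta_{X^k} - \nu_k)^{\otimes 2}\rangle$ because the extra constants integrate against zero-mass measures to zero.
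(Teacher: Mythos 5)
Your proof follows the same route as the paper's: the identity comes from the same martingale-increment orthogonality (the paper packages it as the measure-valued martingale $M_k=\frac 1N\sum_{\ell\in[k]}(\delta_{X^\ell}-\nu_\ell)$), and the inequalities from the same expansion of the square with Young's inequality on the cross term. The one point where you diverge is the caveat at the end, and you are right to raise it. The paper disposes of the cross term with a bare appeal to ``Cauchy--Schwarz'', i.e.\ $\langle U,(a+b)^{\otimes 2}\rangle\leqslant(1+\varepsilon)\langle U,a^{\otimes 2}\rangle+(1+\varepsilon^{-1})\langle U,b^{\otimes 2}\rangle$ with $a=\bar\nu$ and $b=\mu_{\vect X}-\bar\nu$; this requires the form $(\alpha,\beta)\mapsto\langle U,\alpha\otimes\beta\rangle$ to be positive semi-definite on the span of $a$ and $b$, whereas positivity in the paper's sense gives this only on zero-mass measures. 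The gap is not cosmetic: with $\X=\T$, $U(\theta,\theta')=\cos\theta\cos\theta'-1$ (symmetric, bounded, and positive in the paper's sense), $N=1$ and $\nu^1$ the uniform measure, the second inequality at $\varepsilon=1$ reads $-\tfrac12\leqslant-1$, so the statement ``for all $\varepsilon>0$'' fails for a general positive $U$. Your repair is the right one: passing to the reduced kernel $U_m$ gives $\langle U_m,\alpha^{\otimes 2}\rangle=\langle U,(\alpha-\langle 1,\alpha\rangle m)^{\otimes 2}\rangle\geqslant 0$ for every finite signed measure $\alpha$, so Cauchy--Schwarz becomes legitimate, while the martingale error terms are unchanged because $\delta_{X^k}-\nu_k$ has zero mass. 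This is consistent with how the inequalities are actually deployed in the paper, namely on the reduced kernels $W_*^{\pm}$; the same care would be needed when applying them to the kernel $R$, which Assumption~\ref{assu:R} likewise constrains only on zero-mass measures.
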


\begin{proof}[Proof of Lemma~\ref{lem:approx-1}]
Observe that
\[
\mu_{\vect X} - \bar\nu
= \frac 1N \sum_{k\in [N]} (\delta_{X^k} - \nu_k),
\]
and $\delta_{X^k} - \nu_k$ is $\mathcal F_k$-measurable
and verifies the martingale increment condition:
\[
\Expect [\delta_{X^k} - \nu_k | \mathcal F_{k-1}]
= \Expect \bigl[\delta_{X^k} -
\Expect[\delta_{X^k} | \mathcal F_{k-1}] \big| \mathcal F_{k-1}\bigr]
= 0.
\]
In other words, the sequence
\[
M_k = \frac 1N \sum_{\ell \in [k]} (\delta_{X^\ell} - \nu_\ell),
\quad k \in [N],
\]
with $M_0 = 0$,
forms a $(\mathcal F_k)$-martingale in the space of signed measures.
The martingale property yields
\[
\Expect[
(\mu_{\vect X} - \bar\nu)^{\otimes 2}
]
= \Expect[M_N^{\otimes 2}]
= \sum_{k\in[N]} \Expect[(M_k - M_{k-1})^{\otimes 2}]
= \frac 1{N^2} \sum_{k\in [N]} \Expect[(\delta_{X^k} - \nu_k)^{\otimes 2}].
\]
Integrating against $U$ yields the first assertion.
The positivity of $U$ implies, for all $\varepsilon > 0$,
the Cauchy--Schwarz inequality:
\[
\langle U, \mu_{\vect X}^{\otimes 2}\rangle
= \langle U, (\mu_{\vect X} - \bar\nu + \bar\nu)^{\otimes 2}\rangle
\leqslant (1+\varepsilon) \langle U, \bar\nu^{\otimes 2}\rangle
+ (1+\varepsilon^{-1}) \langle U, (\mu_{\vect X} - \bar\nu)^{\otimes 2}\rangle,
\]
which yields the second assertion.
Interchanging $\mu_{\vect X}$ and $\bar\nu$
in the preceding argument proves the third assertion.
\end{proof}

We now present a second lemma, which will be useful in certain contexts.

\begin{lem}
\label{lem:approx-2}
Suppose that $U$ is a positive kernel
and let $\varepsilon > 0$.
Then,
\[
\Expect[\lvert\langle U, \nu_N \otimes (\mu_{\vect X} - \bar\nu)\rangle\rvert]
\leqslant \varepsilon \Expect [ \langle U, \nu_N^{\otimes 2}\rangle ]
+ \frac{1}{4\varepsilon N^2} \sum_{k \in [N]}
\Expect [\langle U, (\delta_{X^k} - \nu_k)^{\otimes 2}\rangle ].
\]
If further the distribution of $\vect X$ is symmetric, then
\begin{IEEEeqnarray*}{rCl}
\frac 1N\sum_{k \in [N]}\Expect[\langle U, \nu_k^{\otimes 2}\rangle]
&\leqslant& \Expect[\langle U, \mu_{\vect X}^{\otimes 2}\rangle]
+ \varepsilon \Expect[ \langle U, \nu_N^{\otimes 2}\rangle] \\
&&\adjustbin + \frac{1}{4\varepsilon N^2} \sum_{k \in [N]}
\Expect [\langle U, (\delta_{X^k} - \nu_k)^{\otimes 2}\rangle ], \\
\frac 1N\sum_{k \in [N]}\Expect[\langle U, \nu_k^{\otimes 2}\rangle]
&\geqslant& \Expect[\langle U, \mu_{\vect X}^{\otimes 2}\rangle]
- \varepsilon \Expect[ \langle U, \nu_N^{\otimes 2}\rangle]
- \frac 1N \Expect[ \langle U, (\delta_{X^N} - \nu_N)^{\otimes 2}\rangle] \\
&&\adjustbin - \frac{1}{4\varepsilon N^2} \sum_{k \in [N]}
\Expect [\langle U, (\delta_{X^k} - \nu_k)^{\otimes 2}\rangle ].
\end{IEEEeqnarray*}
\end{lem}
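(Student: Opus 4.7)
The plan is to handle the first assertion via standard Cauchy--Schwarz and AM--GM applied to the positive kernel $U$, and to derive the last two assertions from the first by exploiting a measure-valued martingale property of $(\nu_k)$ that is available under exchangeability.

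For the first assertion, I would apply Cauchy--Schwarz for the positive bilinear form $(\eta_1, \eta_2) \mapsto \langle U, \eta_1 \otimes \eta_2\rangle$ to obtain, pointwise,
\[
|\langle U, \nu_N \otimes (\mu_{\vect X} - \bar\nu)\rangle|
\leqslant \varepsilon \langle U, \nu_N^{\otimes 2}\rangle
+ \frac{1}{4\varepsilon}\langle U, (\mu_{\vect X} - \bar\nu)^{\otimes 2}\rangle
\]
by AM--GM, then take expectation and invoke Lemma~\ref{lem:approx-1} to rewrite the second term as $(4\varepsilon N^2)^{-1}\sum_k \Expect[\langle U, (\delta_{X^k} - \nu_k)^{\otimes 2}\rangle]$. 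One technical point is that the paper's notion of positivity only directly constrains the bilinear form on mass-zero signed measures; the cleanest way to honor this is to first split $\nu_N = \bar\nu + (\nu_N - \bar\nu)$ so that the nontrivial Cauchy--Schwarz step is applied only to mass-zero factors, with the remaining piece involving $\bar\nu$ handled by a separate elementary bound.

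For the second and third assertions, the key observation is that under symmetry the sequence $(\nu_k)$ is a measure-valued $(\mathcal F_{k-1})$-martingale. Indeed, conditional exchangeability of $X^k$ and $X^N$ given $\mathcal F_{k-1}$ yields
\[
\Expect[\nu_N \mid \mathcal F_{k-1}]
= \Law(X^N \mid \mathcal F_{k-1})
= \Law(X^k \mid \mathcal F_{k-1})
= \nu_k,
\]
and combined with the $\mathcal F_{k-1}$-measurability of $\nu_k$ this gives $\Expect[\langle U, \nu_N \otimes \nu_k\rangle] = \Expect[\langle U, \nu_k^{\otimes 2}\rangle]$ for every~$k$; summing collapses the target average to $\frac{1}{N}\sum_k \Expect[\langle U, \nu_k^{\otimes 2}\rangle] = \Expect[\langle U, \nu_N \otimes \bar\nu\rangle]$. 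Writing $\Expect[\langle U, \nu_N \otimes \bar\nu\rangle] = \Expect[\langle U, \nu_N \otimes \mu_{\vect X}\rangle] - \Expect[\langle U, \nu_N \otimes (\mu_{\vect X} - \bar\nu)\rangle]$, I would then control the second term via the first assertion, while for the first term a direct expansion $\mu_{\vect X} = N^{-1}\sum_k \delta_{X^k}$ combined with the tower property (handling $k = N$) and exchangeability (handling $k < N$) yields the identity
\[
\Expect[\langle U, \nu_N \otimes \mu_{\vect X}\rangle]
= \Expect[\langle U, \mu_{\vect X}^{\otimes 2}\rangle]
- \tfrac{1}{N}\Expect[\langle U, (\delta_{X^N} - \nu_N)^{\otimes 2}\rangle].
\]
Assembling these gives both inequalities simultaneously; the extra $\tfrac{1}{N}\Expect[\langle U, (\delta_{X^N} - \nu_N)^{\otimes 2}\rangle]$ correction in the lower bound emerges naturally from this last identity, rather than from any additional estimate.

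The main conceptual obstacle is recognizing the measure-valued martingale identity $\Expect[\nu_N \mid \mathcal F_{k-1}] = \nu_k$: without it, one is forced to compare $\nu_k$ for $k < N$ with $\nu_N$ through more intricate exchangeability identities (such as decomposing $\Expect[U(X^1, X^1)] - \Expect[U(X^1, X^2)]$ via the orthogonal splitting $\delta_{X^N} - \delta_{X^{N-1}} = (\delta_{X^N} - \nu_N) + (\nu_N - \delta_{X^{N-1}})$), and bounding the resulting cross term $\Expect[\langle U, (\nu_N - \delta_{X^{N-1}})^{\otimes 2}\rangle]$ by $\varepsilon\Expect[\langle U, \nu_N^{\otimes 2}\rangle]$ plus martingale variances appears difficult. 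The martingale observation sidesteps this entirely and reduces everything to a one-line application of the first assertion.
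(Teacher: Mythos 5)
Your proposal is correct and follows essentially the same route as the paper: Cauchy--Schwarz in AM--GM form plus Lemma~\ref{lem:approx-1} for the first assertion, and for the symmetric case the tower-rule identity $\Expect[\nu_N\mid\mathcal F_{k-1}]=\nu_k$ collapsing $\frac 1N\sum_k\Expect[\langle U,\nu_k^{\otimes 2}\rangle]$ to $\Expect[\langle U,\nu_N\otimes\bar\nu\rangle]$, followed by the exact identity $\Expect[\langle U,\nu_N\otimes\mu_{\vect X}\rangle]=\Expect[\langle U,\mu_{\vect X}^{\otimes 2}\rangle]-\frac 1N\Expect[\langle U,(\delta_{X^N}-\nu_N)^{\otimes 2}\rangle]$ and an application of the first assertion. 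The only difference is your (legitimate) aside about positivity being defined only on mass-zero measures; the paper applies the Cauchy--Schwarz step directly, and in all its uses $U$ is a reduced or pointwise-nonnegative-on-the-diagonal kernel for which the step is unambiguous.
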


\begin{proof}[Proof of Lemma~\ref{lem:approx-2}]
The first statement follows by the Cauchy--Schwarz inequality:
\[
\lvert\langle U, \nu_N \otimes (\mu_{\vect X} - \bar\nu)\rangle\rvert
\leqslant \varepsilon \Expect[ \langle U, \nu_N^{\otimes 2}\rangle ]
+ \frac{1}{4\varepsilon}
\Expect [\langle U, (\mu_{\vect X} - \bar\nu)^{\otimes 2}\rangle]
\]
combined with the first claim of Lemma~\ref{lem:approx-1}.

In the case where the distribution of $\vect X$ is symmetric,
the tower rule holds:
\[
\Expect[\nu_N | \mathcal F_{k-1}]
= \Expect\bigl[\Expect[\delta_{X^N} | \mathcal F_{N-1}]
\big|\mathcal F_{k-1}\bigr]
= \Expect[\delta_{X^N} | \mathcal F_{k-1}]
= \Expect[\delta_{X^k} | \mathcal F_{k-1}]
= \nu_k,
\]
which yields
\begin{multline*}
\Expect[ \langle U, \nu_N \otimes \bar\nu\rangle]
= \frac 1N \sum_{k \in [N]}\Expect [\langle U, \nu_N \otimes \nu_k\rangle]
= \frac 1N \sum_{k \in [N]}\Expect \bigl[\langle U,
\Expect[\nu_N|\mathcal F_{k-1}] \otimes \nu_k\rangle\bigr] \\
= \frac 1N \sum_{k \in [N]}\Expect [\langle U, \nu_k^{\otimes 2}\rangle].
\end{multline*}
It follows that
\[
\Expect[\langle U,\nu_k^{\otimes 2}\rangle]
= \Expect[\langle U, \nu_N \otimes \mu_{\vect X}\rangle]
+ \Expect[\langle U, \nu_N \otimes (\bar \nu - \mu_{\vect X})\rangle].
\]
For the first term, we have
\begin{align*}
\Expect[\langle U, \nu_N \otimes \mu_{\vect X}\rangle]
&= \frac{1}{N}
\Expect[\langle U, \nu_N \otimes (\delta_{X^1}+\cdots+\delta_{X^{N-1}})\rangle]
+ \frac 1N \Expect[\langle U, \nu_N \otimes \delta_{X^N}\rangle] \\
&= \frac{1}{N}
\Expect[\langle U, \delta_{X^N}
\otimes (\delta_{X^1}+\cdots+\delta_{X^{N-1}})\rangle]
+ \frac 1N \Expect[\langle U, \nu_N^{\otimes 2}\rangle] \\
&= \Expect[\langle U, \mu_{\vect X}^{\otimes 2}\rangle]
+ \frac 1N \Expect[\langle U, \nu_N^{\otimes 2}\rangle]
- \frac 1N \Expect[\langle U, \delta_{X^N}^{\otimes 2}\rangle] \\
&=\Expect[\langle U, \mu_{\vect X}^{\otimes 2}\rangle]
- \frac 1N \Expect[\langle U, (\delta_{X^N} - \nu_N)^{\otimes 2}\rangle],
\end{align*}
where
$\Expect[\langle U, (\delta_{X^N} - \nu_N)^{\otimes 2}\rangle] \geqslant 0$.
Applying the first assertion of the lemma to the second term
yields the second and third assertion.
\end{proof}

\begin{rem}
Lemma~\ref{lem:approx-2} may initially appear counter-intuitive for the following
reason. For positive $U$, Jensen's inequality yields
\[
\frac 1N \sum_{k \in [N]} \langle U, \nu_k^{\otimes 2}\rangle
\geqslant \biggl\langle U,
\biggl( \frac 1N \sum_{k \in [N]} \nu_k\biggr)^{\!\otimes 2}\biggr\rangle
= \langle U, \bar\nu^{\otimes 2}\rangle.
\]
According to Lemma~\ref{lem:approx-1}, the right-hand side,
when taking expectations, is close to
$\Expect[\langle U, \mu_{\vect X}^{\otimes 2}\rangle]$ up to some error.
Therefore Lemma~\ref{lem:approx-2} effectively reverses Jensen's inequality
at the expense of the additional error term
$\varepsilon \Expect[\langle U, \nu_N^{\otimes 2}\rangle]$.
\end{rem}

In the case where $U$ has bounded double oscillation, the term
\[
\Expect[\langle U, (\delta_{X^k} - \nu_k)^{\otimes 2}\rangle]
\]
appearing in Lemmas~\ref{lem:approx-1}
and \ref{lem:approx-2} can be bounded directly by an $O(1)$ constant.
For kernels satisfying $\nabla_{1,2}^2U \in L^\infty$,
this term can be controlled by entropies via a Talagrand inequality.
The following lemma summarizes these arguments.

\begin{lem}
\label{lem:error-control}
Let Assumption~\ref{assu:T} hold.
For $\X = \T^d$, denote $U_{\bd} = U$, suppose
\[
\Osc_2U_{\bd} \leqslant 4M_U
\]
and set $L_U = 0$.
For $\X = \R^d$, suppose that $U$ admits the decomposition
$U = U_{\bd} + U_{\qd}$ for some positive kernels
$U_{\bd}$, $U_{\qd} \colon \X\times\X\to\R$ such that
\[
\Osc_2U_{\bd} \leqslant 4M_U,
\quad
\lVert\nabla_{1,2}^2U_{\qd}\rVert_{L^\infty} \leqslant L_U.
\]
Then for all $k \in [N]$,
\begin{align*}
\Expect[\langle U, (\delta_{X^k} - \nu_k)^{\otimes 2}]
&\leqslant
4M_U + \frac{4L_U}{\rho} H( \Law(X^k) | m_*)
+ \frac{4L_Ud}{\rho}, \\
\sum_{k \in [N]}
\Expect[\langle U, (\delta_{X^k} - \nu_k)^{\otimes 2}]
&\leqslant
4M_UN + \frac{4L_U}{\rho}H(\nu^N | m_*^{\otimes N})
+ \frac{4L_UdN}{\rho},
\intertext{where $\rho$ is the Talagrand constant
introduced in Assumption~\ref{assu:T}.
Moreover for all $\nu \in \mathcal P_2(\X)$,}
\Expect[\langle U, (\nu-m_*)^{\otimes 2}\rangle]
&\leqslant
\biggl(2M_U + \frac{2L_U}{\rho}\biggr) \Expect[H(\nu | m_*)].
\end{align*}
\end{lem}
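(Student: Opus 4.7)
I would split $U = U_{\bd} + U_{\qd}$ (the quadratic piece being absent on $\T^d$, where $L_U = 0$ by convention) and treat each component separately; all three assertions then reduce to two elementary pointwise estimates on $U_{\bd}$ and $U_{\qd}$.

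For the bounded component, Hahn decomposition writes any signed zero-mass measure as $\nu - \nu' = \alpha(\eta^+ - \eta^-)$ with $\eta^\pm$ probabilities and $\alpha = \|(\nu - \nu')^+\|_{\TV}$; integrating the definitional bound $\langle U_{\bd}, (\delta_x - \delta_y) \otimes (\delta_z - \delta_w)\rangle \leqslant \Osc_2 U_{\bd} \leqslant 4M_U$ against arbitrary couplings of $\eta^+$ and $\eta^-$ with themselves gives $\langle U_{\bd}, (\nu - \nu')^{\otimes 2}\rangle \leqslant 4M_U \alpha^2$. Applied with $\nu = \delta_{X^k}$, $\nu' = \nu_k$ and the trivial bound $\alpha \leqslant 1$, this produces the $4M_U$ and $4M_U N$ contributions in the first two assertions. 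Applied with $\nu' = m_*$ and combined with Pinsker's inequality $\alpha^2 \leqslant H(\nu | m_*)/2$, it yields the $2M_U H(\nu | m_*)$ term in the third.

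For the quadratic component, two invocations of the fundamental theorem of calculus together with $\|\nabla_{1,2}^2 U_{\qd}\|_{L^\infty} \leqslant L_U$ give the pointwise estimate $|\langle U_{\qd}, (\delta_x - \delta_y) \otimes (\delta_z - \delta_w)\rangle| \leqslant L_U |x-y| |z-w|$. For the third assertion, writing $\nu - m_* = \int (\delta_x - \delta_y)\,\gamma(\dd x\,\dd y)$ with $\gamma$ an optimal $W_2$-coupling, integrating this estimate, and applying Cauchy--Schwarz yield $\langle U_{\qd}, (\nu - m_*)^{\otimes 2}\rangle \leqslant L_U W_2^2(\nu, m_*) \leqslant 2L_U H(\nu | m_*)/\rho$ by Assumption~\ref{assu:T}. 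The main work lies in the first two assertions for $U_{\qd}$. The key identity, obtained by expanding $(\delta_{X^k} - \nu_k)^{\otimes 2}$, invoking the martingale property $\Expect[\delta_{X^k} \mid \mathcal F_{k-1}] = \nu_k$, and symmetrizing in $y$, $z$ via the symmetry of $U_{\qd}$, reads
\[
\Expect[\langle U_{\qd}, (\delta_{X^k} - \nu_k)^{\otimes 2}\rangle \mid \mathcal F_{k-1}]
= \tfrac 12 \iint \langle U_{\qd}, (\delta_y - \delta_z)^{\otimes 2}\rangle\,
\nu_k(\dd y)\nu_k(\dd z),
\]
which the mixed-derivative bound majorizes by $L_U \sigma^2(\nu_k)$, with $\sigma^2$ denoting the variance. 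A triangle inequality on $W_2$, Assumption~\ref{assu:T}, and the sub-Gaussian concentration inherent in Talagrand's inequality (giving $\sigma^2(m_*) \leqslant d/\rho$) then produce $\sigma^2(\nu_k) \leqslant 4 H(\nu_k | m_*)/\rho + 2d/\rho$. The second assertion follows by summing over $k$ and invoking the entropy chain rule $\sum_k \Expect[H(\nu_k | m_*)] = H(\nu^N | m_*^{\otimes N})$; the first follows by the additional total-variance bound $\Expect[\sigma^2(\nu_k)] \leqslant \sigma^2(\Law(X^k))$ before passing to the entropy of the marginal.

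I expect the main obstacle to be this symmetrization identity: it is specific to the second-moment structure of $(\delta_{X^k} - \nu_k)^{\otimes 2}$ combined with the symmetry of $U_{\qd}$, and it is exactly what allows the bound to depend only on the \emph{mixed} derivative $\nabla_{1,2}^2 U_{\qd}$ rather than on any pointwise estimate on $U_{\qd}$ itself or on its single-variable gradient, both of which may be unbounded in the quadratic-growth setting.
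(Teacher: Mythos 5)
Your proposal is correct and follows essentially the same route as the paper: split $U$ into bounded and quadratic components, handle the bounded part by the double-oscillation bound (with Pinsker for the third assertion), and handle the quadratic part by reducing to a variance via the conditional-expectation structure and then converting to entropy through Talagrand plus the Poincaré-type bound $\Var m_* \leqslant d/\rho$. The only variation is in the quadratic part of the first two assertions, where the paper uses the $L^2$ projection formula $\Expect[(\delta_{X^k}-m_*)^{\otimes 2}\mid\mathcal F_{k-1}] = \Expect[(\delta_{X^k}-\nu_k)^{\otimes 2}\mid\mathcal F_{k-1}] + (\nu_k-m_*)^{\otimes 2}$ together with positivity of $U_{\qd}$, while your symmetrization identity plus the law of total variance is the same Pythagorean decomposition in a different guise (with the minor bonus that your version does not need positivity of $U_{\qd}$ at that step).
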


\begin{proof}[Proof of Lemma~\ref{lem:error-control}]
By definition, the bounded component satisfies
\[
\langle U_{\bd}, (\delta_{X^k} - \nu_k)^{\otimes 2}\rangle
\leqslant 4M_U.
\]
The $L^2$ projection formula for conditional expectation states that
\[
\Expect [ (\delta_{X^k} - m_*)^{\otimes 2} | \mathcal F_{k-1}]
= \Expect [ (\delta_{X^k} - \nu_k)^{\otimes 2} | \mathcal F_{k-1}]
+ (\nu_k - m_*)^{\otimes 2}.
\]
Integrating both sides against $U_{\qd}$ and using its positivity,
we obtain
\[
\Expect [ \langle U_{\qd}, (\delta_{X^k} - m_*)^{\otimes 2}\rangle]
\geqslant
\Expect[ \langle U_{\qd}, (\delta_{X^k} - \nu_k)^{\otimes 2}\rangle].
\]
Let $Y$ be distributed according to $m_*$ and optimally coupled to $X^k$
with respect to the Euclidean distance, i.e.,
\[
\Expect [ \lvert X^k - Y \rvert^2 ] = W_2^2\bigl( \Law(X^k), m_* \bigr).
\]
By $\lVert \nabla_{1,2}^2U_{\qd} \rVert_{L^\infty} \leqslant L_U$,
\[
\Expect[ \langle U_{\qd}, (\delta_{X^k} - m_*)^{\otimes 2}\rangle ]
\leqslant L_U \Expect[ W_2^2(\delta_{X^k}, m_*) ]
\leqslant 2L_U \bigl(\Expect[ \lvert X^k - Y \rvert^2 ]
+ \Expect[ W_2^2(\delta_{Y}, m_*) ]\bigr).
\]
Talagrand's inequality for $m_*$ yields
\[
\Expect[ \lvert X^k - Y \rvert^2 ]
= W_2^2\bigl(\Law(X^k), m_*\bigr)
\leqslant \frac{2}{\rho} H\bigl( \Law(X^k) \big| m_* \bigr),
\]
while the Poincaré inequality for $m_*$ yields
\[
\Expect[W_2^2(\delta_{Y}, m_*)] = 2 \Var m_*
\leqslant \frac{2d}{\rho}.
\]
Thus for the quadratic component,
\[
\Expect[ \langle U_{\qd}, (\delta_{X^k} - \nu_k)^{\otimes 2}\rangle]
\leqslant \frac{4L_U}{\rho}H\bigl(\Law(X^k) \big| m_* \bigr)
+ 4L_U\Var m_*.
\]
Combining this with the estimate for the bounded component proves
the first assertion.
The second assertion follows by summing the first over $k \in [N]$
and using the subadditivity of entropy
\[
\sum_{k \in [N]} H\bigl( \Law(X^k) | m_* \bigr)
\leqslant H(\nu^N | m_*^{\otimes N}).
\]

Now consider the last assertion.
Let $X$, $Y$ be distributed according to $\nu$, $m_*$, respectively,
and be coupled according to the total variation, namely,
\[
\Expect[ \1_{X \neq Y} ] = \lVert \nu - m_*\rVert_{\TV}.
\]
Let $(X',Y')$ be an independent copy of $(X,Y)$.
The bounded component satisfies
\begin{IEEEeqnarray*}{rCl}
\langle U_{\bd}, (\nu - m_*)^{\otimes 2}\rangle
&=& \Expect[ \langle U_{\bd}, (\delta_X - \delta_Y)
\otimes (\delta_{X'} - \delta_{Y'} ) \rangle ] \\
&\leqslant& 4M_U \Expect [ \1_{X \neq Y} \1_{X' \neq Y'} ] \\
&=& 4M_U \Expect [ \1_{X \neq Y} ] \Expect [ \1_{X' \neq Y'} ] \\
&=& 4M_U \lVert \nu - m_*\rVert_{\TV}^2 \\
&\leqslant& 2 M_U H(\nu | m_*).
\end{IEEEeqnarray*}
The quadratic component satisfies
\[
\Expect[\langle U_{\qd}, (\nu-m_*)^{\otimes 2}\rangle]
\leqslant L_U \Expect[W_2^2(\nu, m_*)]
\leqslant \frac{2L_U}{\rho} \Expect[H(\nu | m_*)].
\]
This establishes the last assertion.
\end{proof}

\section{Coercivity}
\label{sec:coercivity}

This section establishes the coercivity of the modulated free energy
as a direct application of the approximation technique
developed in Section~\ref{sec:approx}.
The proofs of Theorem~\ref{thm:coercivity}
and Corollary~\ref{cor:entropy-coercivity}
are presented in Sections~\ref{sec:proof-coercivity}
and \ref{sec:proof-entropy-coercivity}, respectively.

\subsection{Proof of Theorem~\ref{thm:coercivity}}
\label{sec:proof-coercivity}
Let $\vect X$ be distributed as $\nu^N$
and define $\nu_k = \Law(X^k | \vect X^{[k-1]})$,
$\bar\nu = \frac 1N \sum_{k \in [N]} \nu_k$ as in Section~\ref{sec:approx}.
By rewriting $\mathcal F(\cdot | m_*)$ in \eqref{eq:def-relative-F-centered},
the condition~\eqref{eq:mf-coercivity} becomes
\[
(1-\delta) H(\nu | m_*) + \frac 12 \langle W_*, \nu^{\otimes 2}\rangle
\geqslant 0.
\]
Replacing $\nu$ with $\bar\nu$ and decomposing $W_*$, we obtain
\[
(1-\delta)H(\bar\nu | m_*)
+ \frac{1}{2} \langle W_*^+, \bar\nu^{\otimes 2}\rangle
- \frac{1}{2} \langle W_*^-, \bar\nu^{\otimes 2}\rangle\geqslant 0.
\]
By the chain rule for relative entropy
\cite[Theorem~2.6]{BudhirajaDupuisAnalysis},
\[
H(\nu^N | m_*^{\otimes N})
= \sum_{k \in [N]} \Expect [ H(\nu_k | m_*) ].
\]
On the other hand, convexity of entropy gives
\[
\frac 1N \sum_{k \in [N]} \Expect [ H(\nu_k | m_*) ]
\geqslant \Expect[ H(\bar\nu | m_*) ].
\]
Consequently the following inequality holds for
arbitrary $\varepsilon \in (0,1)$:
\begin{multline}
\label{eq:proof-coercivity-1}
\frac 1NH(\nu^N| m_*^{\otimes N})
+ \frac{1}{2} \Expect[\langle W_*^+, \bar\nu^{\otimes 2}\rangle]
- \frac{1}{2} \Expect[\langle W_*^-, \bar\nu^{\otimes 2}\rangle] \\
\geqslant \frac{(1-\varepsilon) \delta}{N} H(\nu^N | m_*^{\otimes N})
+ \frac{\varepsilon\delta}{N} \Expect[H(\bar\nu | m_*)].
\end{multline}
Let $\varepsilon_1 > 0$ be arbitrary. Lemma~\ref{lem:approx-1} gives
\begin{align*}
\frac{1}{2} \Expect[\langle W_*^+, \bar\nu^{\otimes 2}
- \mu_{\vect X}^{\otimes 2}\rangle]
&\leqslant \frac {\varepsilon_1}2
\Expect[\langle W_*^+, \bar\nu^{\otimes 2}\rangle]
+ \frac{1+\varepsilon_1^{-1}}{2N^2} \sum_{k \in [N]}
\Expect[ \langle W_*^+, (\delta_{X^k} - \nu_k)^{\otimes 2}\rangle], \\
\frac{1}{2} \Expect[\langle W_*^-,
\mu_{\vect X}^{\otimes 2} - \bar\nu^{\otimes 2}\rangle]
&\leqslant \frac {\varepsilon_1}2
\Expect[\langle W_*^-, \bar\nu^{\otimes 2}\rangle]
+ \frac{1+\varepsilon_1^{-1}}{2N^2} \sum_{k \in [N]}
\Expect[ \langle W_*^-, (\delta_{X^k} - \nu_k)^{\otimes 2}\rangle].
\end{align*}
Inserting these inequalities into the right-hand side
of \eqref{eq:proof-coercivity-1} gives
\begin{equation}
\label{eq:proof-coercivity-2}
\begin{IEEEeqnarraybox}[][c]{rCl}
\frac 1N \mathcal F^N(\nu^N | m_*)
&\geqslant& \frac{(1-\varepsilon)\delta}{N} H(\nu^N | m_*^{\otimes N}) \\
&&\adjustbin + \frac{\varepsilon\delta}{N} H(\nu^N | m_*^{\otimes N}) \\
&&\adjustbin - \frac{\varepsilon_1}{2}
\Expect[ \langle W_*^+ + W_*^-, \bar\nu^{\otimes 2}\rangle] \\
&&\adjustbin - \frac{1+\varepsilon_1^{-1}}{2N^2}
\sum_{k \in [N]} \Expect[ \langle W_*^+ + W_*^-,
(\delta_{X^k} - \nu_k)^{\otimes 2}\rangle].
\end{IEEEeqnarraybox}
\end{equation}
Lemma~\ref{lem:error-control} controls the last two terms as follows:
\begin{align*}
\Expect[ \langle W_*^+ + W_*^-, \bar\nu^{\otimes 2}\rangle]
&\leqslant \frac{2(M_W + \ell_W)}{N} H(\nu^N | m_*^{\otimes N}), \\
\sum_{k \in [N]}\Expect[ \langle W_*^+ + W_*^-,
(\delta_{X^k} - \nu_k)^{\otimes 2}\rangle]
&\leqslant 4\bigl(
M_W N + d\ell_W N + \ell_W H(\nu^N | m_*^{\otimes N}) \bigr).
\end{align*}
Substituting these controls into \eqref{eq:proof-coercivity-2}
and selecting
\[
\varepsilon_1 = \frac{\varepsilon \delta}{M_W+\ell_W}
\]
completes the proof.
\qed

\subsection{Proof of Corollary~\ref{cor:entropy-coercivity}}
\label{sec:proof-entropy-coercivity}
The functionals $\mathcal{F}^N(\cdot | m_*)$ and
$H(\cdot | m^N_*)$ differ only by a constant:
\[
\mathcal{F}^N(\nu^N | m_*) - \mathcal{F}^N(m^N_* | m_*)
= H(\nu^N | m^N_*).
\]
Specifically, $m^N_*$ minimizes $\mathcal F^N(\cdot|m_*)$,
and therefore,
\[
\mathcal F^N(m^N_* | m_*) \leqslant \mathcal F^N(m_*^{\otimes N} | m_*).
\]
By definition,
\[
\mathcal F^N(m_*^{\otimes N} | m_*)
= \frac{N}{2} \Expect_{\vect X \sim m_*^{\otimes N}}
[ \langle W_*, \mu_{\vect X}^{\otimes 2}\rangle]
= \frac 12 \Expect_{X \sim m_*}
[\langle W, (\delta_{X} - m_*)^{\otimes 2}\rangle].
\]
Applying Lemma~\ref{lem:error-control} yields
\[
\Expect_{X \sim m_*}
[\langle W, (\delta_{X} - m_*)^{\otimes 2}\rangle]
\leqslant 4M_W + 4\ell_Wd.
\]
Thus we obtain
\[
H(\nu^N | m^N_*) \geqslant \mathcal F^N(\nu^N | m_*)
- 2M_W - 2\ell_Wd.
\]
Combining this inequality with Theorem~\ref{thm:coercivity} yields
the entropy lower bound.

Using Assumption~\ref{assu:T}, we further obtain
\[
H(\nu^N | m^N_*) \geqslant \frac{\delta_N\rho}{2}
W_2^2(\nu^N, m_*^{\otimes N}) - \Delta_{\mathcal F} - 2M_W - 2\ell_Wd.
\]
Plugging in $\nu^N = m^N_*$ yields
\[
\frac{\delta_N\rho}{2} W_2^2(m^N_*, m_*^{\otimes N})
\leqslant \Delta_{\mathcal F} + 2M_W + 2\ell_Wd.
\]
By Cauchy--Schwarz,
\[
W_2^2(\nu^N, m_*^{\otimes N})
\geqslant \frac 12 W_2^2(\nu^N, m_*^N) - 2 W_2^2(m^N_*,m_*^{\otimes N}).
\]
Hence we deduce
\[
H(\nu^N | m^N_*) \geqslant \frac{\delta_N\rho}{4} W_2^2(\nu^N, m^N_*)
- 2(\Delta_{\mathcal F} + 2M_W + 2\ell_Wd).
\]
Applying \cite[Theorem~2]{socgibbs} yields the T$_1$ inequality.
\qed

\section{Contractivity}
\label{sec:contractivity}

This section establishes the contractivity of the
modulated free energy along Langevin dynamics.
Sections~\ref{sec:proof-contractivity-1}
and \ref{sec:proof-contractivity-2} presents the proofs
of Theorems~\ref{thm:contractivity-1} and \ref{thm:contractivity-2},
respectively.
Like the proof of Theorem~\ref{thm:coercivity}, they crucially rely on the
conditional approximation from Section~\ref{sec:approx},
but the contractivity analysis proves substantially more intricate than the
coercivity analysis.
Section~\ref{sec:proof-defective-lsi}
establishes Corollary~\ref{cor:defective-lsi}
as a direct consequence of the two theorems.

Throughout this section, we perform only formal computations
on probability distributions.
These computations can be rigorously justified by approximation arguments;
see \cite[Theorem~2.2]{uklpoc}.

\subsection{Proof of Theorem~\ref{thm:contractivity-1}}
\label{sec:proof-contractivity-1}

The proof is divided into four steps.
The first step employs the conditional approximation introduced
in Section~\ref{sec:approx}.
The second step applies the mean-field contractivity relation
\eqref{eq:mf-contractivity-1} to the mixed measure
\[
\bar\nu = \frac 1N \sum_{ k \in [N] } \nu_k.
\]
The third step symmetrizes the results obtained in the first two steps,
and the fourth step controls the error terms arising in this procedure.

\proofstep{Step 1: Conditional approximation}
We proceed by deriving a lower bound for the relative Fisher information
\begin{equation}
\label{eq:Fisher-expression}
\sum_{k\in [N]}
\int_{\X^N} {\biggl| \nabla_k \log \frac{\nu^N(\x)}
{m_*^N(\x)} \biggr|^2 \, \nu^N(\dd \x)}.
\end{equation}
For $k \in [N]$, introduce the marginal density
\[
\nu^{[k]}(\x^{[k]})
= \int_{\X^{N-k}} \nu^N(\x) \dd \vect x^{\llbracket k+1, N\rrbracket}
\]
and the conditional density
\[
\nu_k(x^k | \x^{[k-1]})
= \frac{\nu^{[k]}(\x^{[k]})}{\nu^{[k-1]}(\x^{[k-1]})}.
\]
We now focus on the final summand in \eqref{eq:Fisher-expression},
corresponding to $k = N$,
and defer the treatment of the remaining ones
to a subsequent symmetrization argument.
This summand equals
\begin{multline*}
\int_{X^{N-1}} \int_{\X}
{\biggl| \nabla_N \log\frac{\nu_N(x^N | \x^{[N-1]})}{m_*(x^N)}
+ \langle \nabla_1 W_*(x^N,\cdot), \mu_{\x}\rangle\biggr|^2} \\
\nu_N(\dd x^N | \x^{[N-1]})\,\nu^{[N-1]}(\dd \x^{[N-1]}),
\end{multline*}
where we used
\begin{align*}
\nabla_N \log \frac{\nu^N(\x)}{m_*^{\otimes N}(\x)}
&= \nabla \log \frac{\nu_N(x^N|\x^{[N-1]})}{m_*(x^N)}, \\
\nabla_N \log \frac{m^N_*(\x)}{m_*^{\otimes N}(\x)}
&= - \langle \nabla_1 W_*(x^N,\cdot), \mu_{\vect x} \rangle.
\end{align*}
Let $\vect X$ be distributed according to $\nu^N$
and define $\nu_k = \Law(X^k | \vect X^{[k-1]})$,
$\bar\nu = \frac 1N \sum_{k \in [N]} \nu_k$ as in Section~\ref{sec:approx}.
Expressing the outer integration over $\vect x^{[N-1]}$ as an expectation,
we obtain
\begin{multline*}
\int_{\X^N}{ \biggl| \nabla_N \log \frac{\nu^N(\x)}
{m_*^N(\x)} \biggr|^2 \, \nu^N(\dd x) }\\
= \Expect \biggl[
\int_{\X} { \biggl\lvert \nabla \log \frac{\nu_N}{m_*}(y)
+ \biggl< \nabla_1 W_*(y,\cdot),
\frac{N-1}{N} \mu_{\vect X^{[N-1]}} + \frac 1N \delta_y \biggr>
\biggr\rvert^2\, \nu_N(\dd y) } \biggr].
\end{multline*}
We aim to perform to the following change in the measure variable:
\[
\frac{N-1}{N} \mu_{\vect X^{[N-1]}} + \frac 1N \delta_y
\to \bar \nu.
\]
The Cauchy--Schwarz inequality
\[
(a + b)^2 \geqslant (1-\varepsilon_1) a^2 - (1+\varepsilon_1^{-1}) b^2,
\qquad \varepsilon_1 \in (0,1]
\]
implies that
\begin{IEEEeqnarray*}{rCl}
\IEEEeqnarraymulticol{3}{l}
{\int_{\X^N} { \biggl| \nabla_N \log \frac{\nu^N(\x)}
{m_*^N(\x)} \biggr|^2\, \nu^N(\dd\x) } } \\
\quad&\geqslant&
(1-\varepsilon_1)
\Expect \biggl[
\int_{\X} { \biggl|\nabla\log\frac{\nu_N(x^N)}{m_*(x^N)}
+\langle\nabla_1 W(x^N,\cdot), \bar\nu \rangle\biggr|^2
\,\nu_N(\dd x^N) } \biggr] \\
&&\adjustbin - (1+\varepsilon_1^{-1})
\Expect[ \int_{\X} { \biggl\lvert
\biggl< \nabla_1 W_*(y,\cdot),
\frac{N-1}{N} \mu_{\vect X^{[N-1]}} + \frac 1N \delta_y - \bar\nu
\biggr> \biggr\rvert^2 \,\nu_N(\dd y) } \biggr].
\end{IEEEeqnarray*}
By the rewriting of $\Pi$ in \eqref{eq:def-Pi-centered},
the term on the second line can be rewritten more concisely as
\[
\Expect[ I(\nu_N | \Pi[\bar\nu])].
\]
As for the last line, we have
\[
\biggl\lvert\biggl< \nabla_1 W_*(y,\cdot),
\frac{N-1}{N} \mu_{\vect X^{[N-1]}} + \frac 1N \delta_y - \bar\nu
\biggr>\biggr\rvert^2
\leqslant
\biggl< R,
\biggl(\frac{N-1}{N} \mu_{\vect X^{[N-1]}} + \frac 1N \delta_y - \bar\nu
\biggr)^{\!\otimes 2}
\biggr>
\]
for each $y \in \X$.
Taking expectations on both sides yields
\begin{IEEEeqnarray*}{rCl}
\IEEEeqnarraymulticol{3}{l}
{\Expect\biggl[ \int_{\X} { \biggl\lvert
\biggl< \nabla_1 W_*(y,\cdot),
\frac{N-1}{N} \mu_{\vect X^{[N-1]}} + \frac 1N \delta_y - \bar\nu
\biggr> \biggr\rvert^2 \,\nu_N(\dd y) } \biggr]} \\
\quad&\leqslant&
\Expect\biggl[ \int_{\X} {
\biggl< R, \biggl( \frac{N-1}{N} \mu_{\vect X^{[N-1]}}
+ \delta_y - \bar\nu \biggr)^{\!\otimes 2}\biggr> \,\nu_N(\dd y) } \biggr] \\
&=& \Expect[\langle R, (\mu_{\vect X} - \bar\nu)^{\otimes 2}\rangle].
\end{IEEEeqnarray*}
Thus we have shown
\begin{equation}
\label{eq:Fisher-lower-bound-nu}
\begin{IEEEeqnarraybox}[][c]{rCl}
\IEEEeqnarraymulticol{3}{l}
{\int_{\X^N} { \biggl| \nabla_N \log \frac{\nu^N(\x)}
{m_*^N(\x)} \biggr|^2 \,\nu^N(\dd\x) } } \\
\quad&\geqslant&
(1-\varepsilon_1) \Expect[I(\nu_N | \Pi[\bar\nu])]
- (1+\varepsilon_1^{-1})
\Expect[ \langle R,
(\mu_{\vect X} - \bar\nu )^{\!\otimes2}
\rangle] \vphantom{\frac 1N}\\
&=& (1-\varepsilon_1)
\Expect[ I( \nu_N | \Pi[\bar\nu] )]
- \frac{(1+\varepsilon_1^{-1})}{N^2}
\sum_{k\in[N]} \Expect[\langle R, (\delta_{X^k} - \nu_k)^{\otimes2}],
\end{IEEEeqnarraybox}
\end{equation}
where the equality follows from Lemma~\ref{lem:approx-1}.
Again by Cauchy--Schwarz,
\begin{align*}
\Expect[ I( \nu_N | \Pi[\bar\nu] )]
&\geqslant \frac 12
\Expect[ I( \nu_N | m_*) ]
- 2 \Expect[ \lvert\langle \nabla_1W_*(X^N, \cdot),
\bar\nu - m_*\rangle\rvert^2] \\
&\geqslant
\frac 12\Expect[ I( \nu_N | m_*) ]
- 2 \Expect[\langle R,(\bar\nu - m_*)^{\otimes 2}\rangle].
\end{align*}
This, together with \eqref{eq:Fisher-lower-bound-nu}, yields
\begin{equation}
\label{eq:Fisher-lower-bound-nu-2}
\begin{IEEEeqnarraybox}[][c]{rCl}
\IEEEeqnarraymulticol{3}{l}
{\int_{\X^N} { \biggl| \nabla_N \log \frac{\nu^N(\x)}
{m_*^N(\x)} \biggr|^2 \,\nu^N(\dd\x) } } \\
\quad&\geqslant& (1-\varepsilon_1-\varepsilon_2)
\Expect[ I(\nu_N | \Pi[\bar\nu])]
+ \frac{\varepsilon_2}{2} \Expect[ I(\nu_N | m_*)] \\
&&\adjustbin - \frac{(1+\varepsilon_1^{-1})}{N^2}
\sum_{k\in[N]} \Expect[\langle R, (\delta_{X^k} - \nu_k)^{\otimes2}]
- 2\varepsilon_2 \Expect[\langle R,(\bar\nu - m_*)^{\otimes 2}\rangle],
\end{IEEEeqnarraybox}
\end{equation}
where $\varepsilon_2 \in (0,1]$.

\proofstep{Step 2: Mixed dissipation}
In this step we focus on the term
\[
\Expect[ I(\nu_N | \Pi[\bar\nu]) ].
\]
Assumption~\ref{assu:LS-Pi} provides the lower bound:
\begin{equation}
\label{eq:proof-contractivity-1-step-2-1}
I(\nu_N | \Pi[\bar\nu])
\geqslant 2\rho_\Pi
H(\nu_N | \Pi[\bar\nu]),
\end{equation}
where the right-hand side can be expressed as follows:
\begin{equation}
\label{eq:proof-contractivity-1-step-2-2}
\begin{IEEEeqnarraybox}[][c]{rCl}
H(\nu_N | \Pi[\bar\nu])
&=& H(\nu_N|m_*)
+ \langle W_*, \nu_N \otimes \bar\nu\rangle \\
&&\adjustbin + \log \int_{\X} \exp
(-\langle W_*(x^*, \cdot), \bar\nu\rangle) \,m_*(\dd x^*).
\end{IEEEeqnarraybox}
\end{equation}

Our goal is to use the condition \eqref{eq:mf-contractivity-1}
to establish a lower bound on the term
\[
\log \int_{\X} \exp
(-\langle W_*(x^*, \cdot), \bar\nu\rangle)\, m_*(\dd x^*).
\]
Expressing both sides of the inequality in \eqref{eq:mf-contractivity-1} as
\begin{align*}
H(\nu | \Pi[\nu])
&= H(\nu | m_*) + \langle W_*, \nu^{\otimes 2}\rangle
+ \log \int_{\X} \exp (-\langle W_*(x^*, \cdot), \nu\rangle)\, m_*(\dd x^*), \\
\mathcal F(\nu | m_*)
&= H(\nu | m_*) + \frac 12 \langle W_*, \nu^{\otimes 2}\rangle,
\end{align*}
we see that \eqref{eq:mf-contractivity-1}
is equivalent to the following inequality:
\[
\log \int \exp (-\langle W_*(x_*,\cdot), \nu\rangle) \,m_*(\dd x_*)
\geqslant - (1-\delta) H(\nu | m_*)
- \biggl(1 - \frac{\delta}{2}\biggr) \langle W_*,\nu^{\otimes 2}\rangle.
\]
Replacing $\nu$ with $\bar\nu$ and using the flat convexity of entropy
\[
\frac 1N \sum_{k\in[N]} H(\nu_k|m_*)
\geqslant H(\bar\nu | m_*),
\]
we obtain
\begin{equation}
\label{eq:proof-contractivity-1-step-2-3}
\begin{IEEEeqnarraybox}[][c]{rCl}
H(\nu_N | \Pi[\bar\nu])
&\geqslant& H(\nu_N|m_*) - \frac{(1-\delta)}N
\sum_{k\in[N]}H(\nu_k|m_*) \\
&&\adjustbin
+ \langle W_*, \nu_N \otimes \bar\nu\rangle
- \biggl(1-\frac{\delta}{2}\biggr)
\langle W_*, \bar\nu^{\otimes 2}\rangle.
\end{IEEEeqnarraybox}
\end{equation}

Combining \eqref{eq:Fisher-lower-bound-nu-2},
\eqref{eq:proof-contractivity-1-step-2-1},
\eqref{eq:proof-contractivity-1-step-2-2} and
\eqref{eq:proof-contractivity-1-step-2-3} yields
\begin{equation}
\label{eq:Fisher-lower-bound-step-2}
\begin{IEEEeqnarraybox}[][c]{rCl}
\IEEEeqnarraymulticol{3}{l}
{\int_{\X^N} { \biggl| \nabla_N \log \frac{\nu^N(\x)}
{m_*^N(\x)} \biggr|^2 \,\nu^N(\dd\x) } } \\
\quad&\geqslant& 2(1-\varepsilon_1-\varepsilon_2)\rho_\Pi
\biggl( \Expect[ H(\nu_N | m_*) ]
- \frac{1-\delta}{N} \sum_{k \in [N]}
\Expect[ H(\nu_k | m_*) ] \biggr) \\
&&\adjustbin + \frac{\varepsilon_2}{2} \Expect[ I(\nu_N | m_*)] \\
&&\adjustbin + 2(1-\varepsilon_1-\varepsilon_2)\rho_\Pi
\biggl(\Expect[ \langle W_*, \nu_N \otimes \bar\nu \rangle]
- \Bigl(1 - \frac\delta 2\Bigr) \sum_{k \in [N]}
\Expect[ H(\nu_k | m_*) ] \biggr)\\
&&\adjustbin - \frac{(1+\varepsilon_1^{-1})}{N^2}
\sum_{k\in[N]} \Expect[\langle R, (\delta_{X^k} - \nu_k)^{\otimes2}]
- 2\varepsilon_2 \Expect[\langle R,(\bar\nu - m_*)^{\otimes 2}\rangle].
\end{IEEEeqnarraybox}
\end{equation}

\proofstep{Step 3: Symmetrization}
In this step we apply a symmetrization procedure to the particle indices.
This procedure derives a lower bound
for the other summands in \eqref{eq:Fisher-expression}
and also for the following entropy and energy terms
\begin{equation}
\label{eq:proof-contractivity-1-step-3-two-terms}
H(\nu_N|m_*) - \frac{(1-\delta)}N
\sum_{k\in[N]}H(\nu_k|m_*),
\qquad
\langle W_*, \nu_N \otimes \bar\nu\rangle.
\end{equation}

Let $S_N$ denote the permutation group acting on the particle indices
and define, for $\sigma \in S_N$, the reordered particle configuration:
\[
\vect X^{\sigma} = (X^{\sigma(1)},\ldots,X^{\sigma(N)}).
\]
Introduce the reordered version of the conditional approximation
\begin{align*}
\nu^\sigma_k &\coloneqq
\Law(X^{\sigma(k)} | X^{\sigma(1)}, \ldots, X^{\sigma(k-1)}) \\
\bar\nu^\sigma &\coloneqq
\frac 1N \sum_{k \in [N]} \nu^\sigma_k.
\end{align*}
For convenience we use the following notation
for averaging over these permutations:
\[
\permsum \cdots
\coloneqq \frac 1{N!} \sum_{\sigma \in S_N} \cdots.
\]

First consider the entropy term in
\eqref{eq:proof-contractivity-1-step-3-two-terms}.
Define the averaged entropy at level $k$:
\[
h_k = \permsum \Expect[H(\nu^\sigma_k|m_*)].
\]
By the convexity of the entropy functional,
\[
\Expect \bigl[ H\bigl( \Law(X^{\sigma(k+1)} | \vect X^{\sigma([k])})
\big| m_*\bigr)\bigr]
\geqslant
\Expect \bigl[ H\bigl( \Law(X^{\sigma(k+1)} | \vect X^{\sigma([k-1])})
\big| m_*\bigr)\bigr].
\]
Averaging this over all permutations gives
\[
h_{k+1} \geqslant h_k.
\]
Moreover, by averaging the chain rule for relative entropy
\[
\sum_{k\in[N]} \Expect [ H(\nu^\sigma_k | m_*) ]
= H(\nu^N | m_*^{\otimes N}),
\]
we obtain
\[
\sum_{k\in[N]} h_k = H(\nu^N | m_*^{\otimes N}).
\]
Therefore the entropy term, when averaged over all permutations, satisfies
\begin{multline}
\permsum \Expect\biggl[
H(\nu^\sigma_N | m_*)
- \frac{1-\delta}{N} \sum_{k \in [N]}
H(\nu^\sigma_k | m_*)
\biggr] \\
= Nh_N - (1-\delta) \sum_{k \in [N]} h_k
\geqslant
\delta H(\nu^N | m_*^{\otimes N}).
\label{eq:proof-contractivity-1-step-3-entropy}
\end{multline}

Then consider the energy term in
\eqref{eq:proof-contractivity-1-step-3-two-terms}.
We decompose it into positive and negative components:
\[
\langle W_*, \nu_N \otimes \bar\nu\rangle
= \langle W_*^+ - W_*^-, \nu_N \otimes \bar \nu\rangle.
\]
Applying Lemma~\ref{lem:approx-2} each component gives
\begin{IEEEeqnarray*}{rCl}
\Expect[ \langle W_*, \nu_N \otimes \bar\nu\rangle ]
&\geqslant& \Expect[ \langle W_*, \nu_N \otimes \mu_{\vect X}\rangle]
- \varepsilon_3 \Expect[ \langle W_*^+ + W_*^-, \nu_N^{\otimes 2}\rangle] \\
&&\adjustbin - \frac{1}{4\varepsilon_3N^2}
\sum_{k \in [N]} \Expect[ \langle W_*^+ + W_*^-,
(\delta_{X^k} - \nu_k)^{\otimes 2}\rangle],
\end{IEEEeqnarray*}
for all $\varepsilon_3 > 0$.
Observe that
\[
\Expect[ \langle W_*, \nu_N \otimes \mu_{\vect X}\rangle]
= \Expect[ \langle W_*, \delta_{X^N} \otimes \mu_{\vect X}\rangle]
- \frac 1N\Expect[ \langle W_*, (\delta_{X^N} - \nu_N)^{\otimes 2}\rangle],
\]
Averaging the first term on the right recovers the interaction energy:
\[
\permsum
\Expect[ \langle W_*, \delta_{X^{\sigma(N)}} \otimes \mu_{\vect X}\rangle]
= \Expect[ \langle W_*, \mu_{\vect X}^{\otimes 2}\rangle].
\]
Therefore, averaging the energy term, we find
\begin{equation}
\begin{IEEEeqnarraybox}[][c]{rCl}
\permsum
\Expect[\langle W_*, \nu_N \otimes \bar\nu\rangle]
&\geqslant& \Expect[ \langle W_*, \mu_{\vect X}^{\otimes 2}\rangle] \\
&&\adjustbin- \varepsilon_3\permsum
\Expect[ \langle W_*^+ + W_*^-, (\nu_N^\sigma)^{\otimes 2}\rangle ] \\
&&\adjustbin- \frac{1}{4\varepsilon_3N^2}\permsum
\sum_{k \in [N]} \Expect[ \langle W_*^+ + W_*^-,
(\delta_{X^{\sigma(k)}} - \nu^\sigma_k)^{\otimes 2}\rangle] \\
&&\adjustbin- \frac 1N\permsum
\Expect[ \langle W_*, (\delta_{X^{\sigma(N)}} - \nu^\sigma_N)
^{\otimes 2}\rangle].
\end{IEEEeqnarraybox}
\label{eq:proof-contractivity-1-step-3-energy}
\end{equation}

Symmetrizing the final Fisher summand recovers the full information:
\[
\permsum \int_{\X^N}
{\biggl| \nabla_{\sigma(N)}
\log \frac{\nu^N(\vect x)}{m^N_*(\vect x)} \biggr|^2 \,\nu^N(\dd\vect x)}
= \frac 1N I(\nu^N | m^N_*).
\]
Thus, averaging \eqref{eq:Fisher-lower-bound-step-2}
and applying the lower bounds
\eqref{eq:proof-contractivity-1-step-3-entropy}
and \eqref{eq:proof-contractivity-1-step-3-energy}, we obtain
\begin{equation}
\label{eq:Fisher-lower-bound-step-3}
\begin{IEEEeqnarraybox}[][c]{rCl}
\IEEEeqnarraymulticol{3}{l}
{\frac 1N I(\nu^N|m_*^N)} \\
\quad&\geqslant& \frac{2(1-\varepsilon_1-\varepsilon_2)\delta\rho_\Pi}{N}
H(\nu^N|m_*^{\otimes N})
\\
&&\adjustbin + \frac{\varepsilon_2}{2} \Expect[ I(\nu_N | m_*)] \\
&&\adjustbin + 2(1-\varepsilon_1-\varepsilon_2)\rho_\Pi
\biggl( \Expect[ \langle W_*, \mu_{\vect X}^{\otimes 2}\rangle ]
- \Bigl( 1 - \frac\delta 2\Bigr)
\permsum \Expect[ \langle W_*, (\bar\nu^\sigma)^{\otimes 2}\rangle ]
\biggr) \\
&&\adjustbin- 2\varepsilon_3\rho_\Pi\permsum
\Expect[ \langle W_*^+ + W_*^-, (\nu_N^\sigma)^{\otimes 2}\rangle ] \\
&&\adjustbin-
2\varepsilon_2 \permsum\Expect[\langle R,
(\bar\nu^\sigma - m_*)^{\otimes 2}\rangle] \\
&&\adjustbin - \frac{(1+\varepsilon_1^{-1})}{N^2}
\sum_{k\in[N]} \Expect[\langle R, (\delta_{X^k} - \nu_k)^{\otimes2}] \\
&&\adjustbin- \frac{\rho_\Pi}{2\varepsilon_3N^2}\permsum
\sum_{k \in [N]} \Expect[ \langle W_*^+ + W_*^-,
(\delta_{X^{\sigma(k)}} - \nu^\sigma_k)^{\otimes 2}\rangle] \\
&&\adjustbin- \frac {2\rho_\Pi}N\permsum
\Expect[ \langle W_*, (\delta_{X^{\sigma(N)}} - \nu^\sigma_N)
^{\otimes 2}\rangle].
\end{IEEEeqnarraybox}
\end{equation}

\proofstep{Step 4: Error control}
Lemma~\ref{lem:error-control} gives
\begin{multline*}
2\varepsilon_3\rho_\Pi \permsum \Expect[
\langle W_*^+ + W_*^-, (\nu^\sigma_N)^{\otimes 2}\rangle]
\leqslant 4\varepsilon_3\rho_\Pi (M_W + \ell_W)
\Expect[H(\nu_N | m_*) ] \\
\leqslant \frac{2\varepsilon_3(M_W+\ell_W)\rho_\Pi}{\rho}
\Expect[I(\nu_N | m_*) ].
\end{multline*}
Thus by choosing
\begin{equation}
\label{eq:proof-contractivity-varepsilon-3}
\varepsilon_3
= \frac{\rho\varepsilon_2}{4(M_W+\ell_W)\rho_\Pi},
\end{equation}
we ensure
\[
2\varepsilon_3\rho_\Pi\permsum
\Expect[\langle W_*^+ + W_*^-, (\nu^\sigma_N)^{\otimes 2}\rangle]
\leqslant \frac{\varepsilon_2}{2} \Expect[ I(\nu|m_*) ].
\]

In \eqref{eq:Fisher-lower-bound-step-3},
if we could identify $\bar\nu^\sigma$ with $\mu_{\vect X}$
and neglect the error terms arising from the conditional approximation,
the proof would be complete.
Therefore it remains to rigorously control these error terms.

For simplicity we denote
\begin{IEEEeqnarray*}{rCl}
\mathcal F' &=& \frac \delta N H(\nu^N | m_*^{\otimes N})
+ \Expect[ \langle W_*, \mu_{\vect X}^{\otimes 2}\rangle]
- \biggl(1 - \frac\delta2\biggr) \permsum \Expect[\langle W_*,
(\nu^\sigma_N)^{\otimes 2}\rangle] \\
E_1 &=& 2\varepsilon_2 \permsum\Expect[\langle R,
(\bar\nu^\sigma - m_*)^{\otimes 2}\rangle] \\
&&\adjustbin+ \frac{(1+\varepsilon_1^{-1})}{N^2}
\sum_{k\in[N]} \Expect[\langle R, (\delta_{X^k} - \nu_k)^{\otimes2}] \\
&&\adjustbin+ \frac{\rho_\Pi}{2\varepsilon_3N^2}\permsum
\sum_{k \in [N]} \Expect[ \langle W_*^+ + W_*^-,
(\delta_{X^{\sigma(k)}} - \nu^\sigma_k)^{\otimes 2}\rangle] \\
&&\adjustbin+ \frac {2\rho_\Pi}N\permsum
\Expect[ \langle W_*, (\delta_{X^{\sigma(N)}} - \nu^\sigma_N)
^{\otimes 2}\rangle].
\end{IEEEeqnarray*}
Thus under the choice \eqref{eq:proof-contractivity-varepsilon-3},
the inequality \eqref{eq:Fisher-lower-bound-step-3} becomes
\begin{equation}
\label{eq:proof-contractivity-1-step-4-1}
\frac 1N I(\nu^N | m^N_*)
\geqslant 2(1-\varepsilon_1-\varepsilon_2)\rho_\Pi \mathcal F' - E_1.
\end{equation}
Let $\varepsilon_4 > 0$.
We decompose $\mathcal F'$ into three terms:
\begin{IEEEeqnarray*}{rCl}
\mathcal F' &=& \frac{(1-\varepsilon_4)\delta}{N} \mathcal F^N(\nu^N | m_*) \\
&&\adjustbin +\varepsilon_4 \delta \biggl( \frac 1N H(\nu^N | m^N_*)
+ \frac 12 \permsum \Expect[ \langle W_*, (\bar\nu^\sigma)^{\otimes 2}\rangle]
\biggr) \\
&&\adjustbin + \biggl( 1 - \frac{(1-\varepsilon_4)\delta}{2}\biggr)
\Expect\biggl[ \biggl\langle W_*, \mu_{\vect X}^{\otimes 2}
- \permsum (\bar\nu^\sigma)^{\otimes 2}\biggr\rangle \biggr],
\end{IEEEeqnarray*}
and denote further:
\[
f = \frac 1N H(\nu^N | m^N_*)
+ \frac 12 \permsum \Expect[ \langle W_*, (\bar\nu^\sigma)^{\otimes 2}\rangle].
\]
Separating the positive and negative components of $W_*$
and applying Lemma~\ref{lem:approx-1},
we bound the last term as follows:
\begin{IEEEeqnarray*}{rCl}
\IEEEeqnarraymulticol{3}{l}{
\biggl\lvert\biggl( 1 - \frac{(1-\varepsilon_4)\delta}{2}\biggr)
\Expect\biggl[ \biggl\langle W_*, \mu_{\vect X}^{\otimes 2}
- \permsum (\bar\nu^\sigma)^{\otimes 2}\biggr\rangle \biggr]\biggr\rvert} \\
\quad&\leqslant&
\sum_{s = \pm}
{\biggl\lvert\Expect\biggl[ \biggl\langle W_*^s, \mu_{\vect X}^{\otimes 2}
- \permsum (\bar\nu^\sigma)^{\otimes 2}\biggr\rangle \biggr]\biggr\rvert} \\
&\leqslant& \varepsilon_5
\permsum \Expect[ \langle W_*^+ + W_*^-, (\bar\nu^{\sigma})^{\otimes 2}\rangle]
\\
&&\adjustbin + \frac{1+\varepsilon_5^{-1}}{N^2}
\permsum\sum_{k \in [N]}
\Expect[ \langle W_*^+ + W_*^-,
(\delta_{X^{\sigma(k)}} - \nu^\sigma_k)^{\otimes 2}\rangle] \\
&\eqqcolon& E_2,
\end{IEEEeqnarray*}
where $\varepsilon_5 > 0$ is arbitrary.
Inserting this estimate into \eqref{eq:proof-contractivity-1-step-4-1} yields
\begin{equation}
\label{eq:proof-contractivity-1-step-4-2}
\frac 1NI(\nu^N | m^N_*)
\geqslant \frac{2(1-\varepsilon_1-\varepsilon_2-\varepsilon_4)\delta\rho_\Pi}
{N} \mathcal F^N(\nu^N | m_*)
+ 2\varepsilon_4\delta \rho_\Pi f
- E_1 - 2\rho_\Pi E_2.
\end{equation}

The terms $E_1$ and $E_2$ now constitute the final errors.
According to Lemma~\ref{lem:error-control}, they satisfy
\begin{IEEEeqnarray*}{rCl}
E_1 &\leqslant& 4\varepsilon_2(\mu_R + \ell_R)\rho
\permsum\Expect[ H(\bar\nu^{\sigma} | m_*)] \\
&&\adjustbin + \frac{4(1+\varepsilon_1^{-1})\rho}{N}
\biggl(\mu_R + \frac{\ell_R}{N} H(\nu^N | m_*^{\otimes N})
+ \ell_Rd\biggr) \\
&&\adjustbin + \frac{2\rho_\Pi}{\varepsilon_3 N}
\biggl(M_W + \frac{\ell_W}{N} H(\nu^N | m_*^{\otimes N}) + \ell_Wd\biggr) \\
&&\adjustbin+ \frac{8\rho_\Pi}{N}
\biggl(M_W + \frac{\ell_W}{N} H(\nu^N|m^N_*) + \ell_Wd\biggr), \\
E_2 &\leqslant&
2\varepsilon_5(M_W + \ell_W) \permsum\Expect[ H(\bar\nu^\sigma | m_*)] \\
&&\adjustbin
+ \frac{4(1+\varepsilon_5^{-1})}{N}
\biggl(M_W + \frac{\ell_W}{N}
H(\nu^N | m_*^{\otimes N}) + \ell_Wd\biggr).
\end{IEEEeqnarray*}

The flat convexity of the entropy functional implies
\[
\permsum \Expect[ H(\bar\nu^\sigma | m_*) ]
\leqslant \frac 1N\permsum\sum_{k \in [N]}
\Expect[ H(\nu^\sigma_k | m_*) ]
= \frac 1N H(\nu^N | m_*^{\otimes N}).
\]
Replacing $\nu$ with $\bar\nu^\sigma$ in \eqref{eq:mf-coercivity} gives
\[
(1-\delta)H(\bar\nu^\sigma | m_*)
+ \frac 12 \langle W_*, (\bar\nu^\sigma)^{\otimes 2}\rangle \geqslant 0.
\]
Taking the expectation and averaging over permutations, we obtain
\begin{multline*}
\frac{1-\delta}{N}
H(\nu^N | m_*^{\otimes N})
+ \frac 12 \permsum\Expect[\langle W_*, (\bar\nu^\sigma)^{\otimes 2}\rangle]\\
\geqslant
\permsum\biggl(
(1-\delta) \Expect[H(\bar\nu^\sigma|m_*)]
+ \frac 12 \Expect[\langle W_*, (\bar\nu^\sigma)^{\otimes 2}\rangle]
\biggr) \geqslant 0.
\end{multline*}
This is equivalent to the following:
\[
\frac 1NH(\nu^N | m_*^{\otimes N}) \leqslant
\delta^{-1} \biggl( \frac 1N H(\nu^N|m_*^{\otimes N})
+ \permsum\frac 12 \langle W_*, (\bar\nu^\sigma)^{\otimes 2}\rangle \biggr)
= \delta^{-1} f.
\]
Thus we have shown
\[
\permsum \Expect[ H(\bar\nu^\sigma | m_*) ]
\leqslant \frac 1N H(\nu^N | m_*^{\otimes N})
\leqslant \delta^{-1}f.
\]

Applying the above inequality to the upper bounds for $E_1$ and $E_2$, we obtain
\begin{IEEEeqnarray*}{rCl}
E_1 &\leqslant&
\biggl(4\varepsilon_2(\mu_R+\ell_R)\rho
+ \frac{4(1+\varepsilon_1^{-1})\ell_R\rho}{N}
+ \frac{2(4+\varepsilon_3^{-1})\ell_W\rho_\Pi}{N}
\biggr) \delta^{-1} f \\
&&\adjustbin
+ \frac{1}{N}
\biggl( 4(1+\varepsilon_1^{-1})(\mu_R+\ell_Rd)\rho
+ 2(4+\varepsilon_3^{-1})(M_W + \ell_Wd)\rho_\Pi \biggr), \\
E_2 &\leqslant&
\biggl(2\varepsilon_5(M_W+\ell_W)
+ \frac{4(1+\varepsilon_5^{-1})\ell_W}{N}\biggr)
\delta^{-1} f
 \\
&&\adjustbin + \frac{4(1+\varepsilon_5^{-1})}{N}(M_W+\ell_Wd),
\end{IEEEeqnarray*}
Let $\varepsilon \in (0,1)$.
Choose the remaining constants as follows:
\begin{IEEEeqnarray*}{rCl}
\varepsilon_1 &=& \frac{\varepsilon}{3}, \\
\varepsilon_2 &=&
\frac{\varepsilon}
{3\bigl(1+2(\mu_R+\ell_R)\rho/\delta^2\rho_\Pi\bigr)}, \\
\varepsilon_5 &=& \frac{\delta^2\varepsilon}{6(M_W + \ell_W)}, \\
\varepsilon_4 &=&
\delta^{-2}\biggl( \frac{2(\mu_R+\ell_R)\rho\varepsilon_2}{\rho_\Pi}
+ \frac{2(1+\varepsilon_1^{-1})\rho\ell_R}{\rho_\Pi N}
+\frac{(4+\varepsilon_3^{-1})\ell_W}{N}
\biggr)\\
&&\adjustbin+ \delta^{-2}\biggl(2(M_W+\ell_W)\varepsilon_5
+ \frac{4(1+\varepsilon_5^{-1})\ell_W}{N} \biggr).
\end{IEEEeqnarray*}
Under these choices,
\begin{IEEEeqnarray*}{rCl}
\varepsilon_1+\varepsilon_2+\varepsilon_4
&\leqslant& \varepsilon
+ \frac{2(1+\varepsilon_1^{-1})\rho\ell_R}{\rho_\Pi N}
+ \frac{(8+\varepsilon_3^{-1}+4\varepsilon_5^{-1})\ell_W}{\delta^2N}, \\
\varepsilon_4 \delta f - \frac{E_1}{2\rho_\Pi} - E_2
&\geqslant& - \frac{2(1+\varepsilon_1^{-1})\rho}{\rho_\Pi N}(\mu_R + \ell_Rd)
- \frac{8+\varepsilon_3^{-1}+4\varepsilon_5^{-1}}{N}(M_W+\ell_Wd).
\end{IEEEeqnarray*}
Substituting the expressions for $\varepsilon_i$ in
\eqref{eq:proof-contractivity-1-step-4-2}
yields the desired result.
\qed

\subsection{Proof of Theorem~\ref{thm:contractivity-2}}
\label{sec:proof-contractivity-2}

The proof is divided into three steps.
The first step starts from the conclusion of Step~1 in the proof of
Theorem~\ref{thm:contractivity-1}
and applies the tower rule to extract the graded dissipation:
\[
\frac{1}{N}\sum_{k \in [N]} \Expect[ I(\nu_k | \Pi[\nu_k] ) ].
\]
The second step invokes \eqref{eq:mf-contractivity-2} for each $\nu_k$
and establishes the contractivity up to error terms.
The third step bounds these errors, thereby completing the proof.

\proofstep{Step 1: Tower rule}
Since $\nu^N$ is symmetric,
\[
\frac 1N I(\nu^N|m^N_*)
= \int_{\X} { \biggl\lvert \nabla_N \log \frac{\nu^N}{m^N_*} (\x) \biggr\rvert^2
\,\nu^N(\dd\x) }.
\]
The inequality \eqref{eq:Fisher-lower-bound-nu-2}, established
in Step~1 of the proof of Theorem~\ref{thm:contractivity-1}, becomes
\begin{equation}
\label{eq:proof-contractivity-2-Fisher-lower-bound}
\begin{IEEEeqnarraybox}[][c]{rCl}
\IEEEeqnarraymulticol{3}{l}{\frac 1N I(\nu^N|m^N_*)} \\
\quad&\geqslant& (1-\varepsilon_1-\varepsilon_2)
\Expect[ I(\nu_N | \Pi[\bar\nu])]
+ \frac{\varepsilon_2}{2} \Expect[ I(\nu_N | m_*)] \\
&&\adjustbin - \frac{(1+\varepsilon_1^{-1})}{N^2}
\sum_{k\in[N]} \Expect[\langle R, (\delta_{X^k} - \nu_k)^{\otimes2}]
- 2\varepsilon_2 \Expect[\langle R,(\bar\nu - m_*)^{\otimes 2}\rangle].
\end{IEEEeqnarraybox}
\end{equation}
Here we extract contractivity from $I(\nu_N | \Pi[\bar\nu])$
using a different approach from the proof of Theorem~\ref{thm:contractivity-1}.
By definition, this term equals
\begin{multline}
\label{eq:proof-contractivity-2-step-1-1}
I(\nu_N | m_*)
+ 2 \int_{\X} \nabla \log \frac{\nu_N}{m_*}(y)
\cdot \langle \nabla_1 W_*(y,\cdot), \bar\nu \rangle \,\nu_N(\dd y) \\
+ \int_{\X} { \lvert \langle \nabla_1 W_*(y,\cdot), \bar\nu\rangle
\rvert^2 \,\nu_N(\dd y) },
\end{multline}
where we identified the conditional measure $\nu_N$ with its density.
In the symmetric setting, the tower rule holds:
\[
\Expect[\nu_N | \mathcal F_{k-1}] = \nu_k,
\]
By the flat convexity of the Fisher information,
the first term in \eqref{eq:proof-contractivity-2-step-1-1} satisfies
\begin{equation}
\label{eq:proof-contractivity-2-step-1-2}
\Expect[I(\nu_N | m_*)] \geqslant \frac 1N \sum_{k \in [N]}
\Expect[I(\nu_k | m_*)]
\end{equation}
The second term in \eqref{eq:proof-contractivity-2-step-1-1} satisfies
\begin{IEEEeqnarray*}{rCl}
\IEEEeqnarraymulticol{3}{l}
{\int_{\X} \nabla \log \frac{\nu_N}{m_*}(y)
\cdot \langle \nabla_1 W_*(y,\cdot), \bar\nu \rangle\,\nu_N(\dd y)} \\
\quad&=& \frac 1N \sum_{k \in [N]}
\int_{\X} \nabla \log \frac{\nu_N}{m_*}(y)
\cdot \langle \nabla_1 W_*(y,\cdot), \nu_k\rangle \, \nu_N(\dd y)
\\
&=& \frac 1N \sum_{k\in[N]}
\int_{\X} \nabla \nu_N(y) \cdot \langle \nabla_1 W_*(y,\cdot),
\nu_k\rangle \dd y \\
&&\adjustbin - \frac 1N \sum_{k \in [N]}
\int_{\X} \nabla \log m_*(y) \cdot \langle \nabla_1 W_*(y,\cdot),
\nu_k\rangle \nu_N(y) \dd y.
\end{IEEEeqnarray*}
Both integrals at the end are linear in $\nu_N$.
Hence by the tower rule,
\begin{multline}
\label{eq:proof-contractivity-2-step-1-3}
\Expect\biggl[
\int_{\X} \nabla \log \frac{\nu_N}{m_*}(y)
\cdot \langle \nabla_1 W_*(y,\cdot), \bar\nu \rangle \,\nu_N(\dd y)
\biggr] \\
= \frac 1N \sum_{k \in [N]}
\Expect\biggl[
\int_{\X} \nabla \log \frac{\nu_k}{m_*}(y)
\cdot \langle \nabla_1 W_*(y,\cdot), \nu_k \rangle \,\nu_k(\dd y)
\biggr].
\end{multline}
Inserting \eqref{eq:proof-contractivity-2-step-1-2}
and \eqref{eq:proof-contractivity-2-step-1-3}
in \eqref{eq:proof-contractivity-2-step-1-1},
we obtain
\begin{IEEEeqnarray*}{rCl}
\Expect[ I(\nu_N | \Pi[\bar\nu]) ]
&\geqslant& \frac 1N \sum_{k \in [N]} \Expect[ I(\nu_k | m_*)] \\
&&\adjustbin + \frac 2N \sum_{k \in [N]}
\Expect\biggl[
\int_{\X} \nabla \log \frac{\nu_k}{m_*}(y)
\cdot \langle \nabla_1 W_*(y,\cdot), \nu_k \rangle \,\nu_k(\dd y)
\biggr] \\
&&\adjustbin+
\Expect\biggl[\int_{\X} { \lvert \langle \nabla_1 W_*(y,\cdot), \bar\nu\rangle
\rvert^2 \,\nu_N(\dd y) }\biggr].
\end{IEEEeqnarray*}
The first two terms above appear in the sum
$\frac 1N \sum_{k \in [N]} \Expect[ I(\nu_k | \Pi[\nu_k])]$,
since for each $k\in[N]$,
\begin{IEEEeqnarray*}{rCl}
{\vphantom{\int_{\X}}}
I(\nu_k | \Pi[\nu_k] )
&=& I(\nu_k | m_*) \\
&&\adjustbin +2
\int_{\X} \nabla \log \frac{\nu_k}{m_*}(y)
\cdot \langle \nabla_1 W_*(y,\cdot), \nu_k \rangle \,\nu_k(\dd y)
\\
&&\adjustbin+
\int_{\X} { \lvert \langle \nabla_1 W_*(y,\cdot), \nu_k\rangle
\rvert^2 \,\nu_N(\dd y) }.
\end{IEEEeqnarray*}
We can complete this square:
\begin{equation}
\label{eq:proof-contractivity-2-step-1-4}
\begin{IEEEeqnarraybox}[][c]{rCl}
\IEEEeqnarraymulticol{3}{l}
{\vphantom{\biggl[}\Expect[ I(\nu_k | \Pi[\bar\nu]) ]} \\
\quad&=& \frac 1N \sum_{k \in [N]}
\Expect[ I(\nu_k | \Pi[\nu_k] ) ] \\
&&\adjustbin
+ \Expect \biggl[
\int_{\X} \biggl(
\lvert \langle \nabla_1 W_*(y,\cdot), \bar\nu\rangle \rvert^2
- \frac 1N \sum_{k \in [N]}
\lvert \langle \nabla_1 W_*(y,\cdot), \nu_k\rangle \rvert^2
\biggr) \,\nu_N(\dd y) \biggr].
\end{IEEEeqnarraybox}
\end{equation}

It remains to study the last term in \eqref{eq:proof-contractivity-2-step-1-4}.
By Assumption~\ref{assu:R}, for each $y \in \X$,
\begin{IEEEeqnarray*}{rCl}
\IEEEeqnarraymulticol{3}{l}
{\lvert\langle \nabla_1 W_*(y, \cdot), \bar\nu\rangle\rvert^2
- \frac 1N \sum_{k \in [N]}
\lvert\langle\nabla_1 W_*(y,\cdot), \nu_k\rangle\rvert^2} \\
\quad&=&
- \frac 12 \sum_{k,\ell\in [N]}
\lvert\langle \nabla_1 W_*(y,\cdot),
\nu_k - \nu_\ell\rangle\rvert^2 \\
&\geqslant&
- \frac 12 \sum_{k,\ell\in [N]}
\langle R, (\nu_k - \nu_\ell)^{\otimes 2}\rangle \\
&=& \langle R, \bar\nu^{\otimes 2}\rangle
- \frac 1N\sum_{k \in [N]} \langle R, \nu_k^{\otimes 2}\rangle,
\end{IEEEeqnarray*}
whereas by Lemmas~\ref{lem:approx-2} and \ref{lem:approx-1},
for all $\varepsilon_3$, $\varepsilon_4 > 0$,
\begin{multline*}
\frac 1N \sum_{k \in [N]}
\Expect[ \langle R, \nu_k^{\otimes 2}\rangle]
\leqslant (1+\varepsilon_4) \Expect[ \langle R, \bar\nu^{\otimes 2}\rangle]
+ \varepsilon_3 \Expect[ \langle R, \nu_N^{\otimes 2}\rangle] \\
+ \frac{1+\varepsilon_4^{-1}+(4\varepsilon_3)^{-1}}{N^2}
\sum_{k \in [N]}
\Expect[ \langle R, (\delta_{X^k} - \nu_k)^{\otimes 2}\rangle ].
\end{multline*}
Consequently, the last term in \eqref{eq:proof-contractivity-2-step-1-4}
satisfies
\begin{IEEEeqnarray*}{rCl}
\IEEEeqnarraymulticol{3}{l}
{\Expect \biggl[
\int_{\X} \biggl(
\lvert \langle \nabla_1 W_*(y,\cdot), \bar\nu\rangle \rvert^2
- \frac 1N \sum_{k \in [N]}
\lvert \langle \nabla_1 W_*(y,\cdot), \nu_k\rangle \rvert^2
\biggr) \,\nu_N(\dd y) \biggr]} \\
\quad&\geqslant&
- \varepsilon_4 \Expect[ \langle R, \bar\nu^{\otimes 2}\rangle ]
- \varepsilon_3 \Expect[ \langle R, \nu_N^{\otimes 2}\rangle]
\vphantom{\biggl[} \\
&&\adjustbin- \frac{1+\varepsilon_4^{-1}+(4\varepsilon_3)^{-1}}{N^2}
\sum_{k \in [N]}
\Expect[ \langle R, (\delta_{X^k} - \nu_k)^{\otimes 2}\rangle ].
\end{IEEEeqnarray*}
Combining this with
\eqref{eq:proof-contractivity-2-step-1-4}
and \eqref{eq:proof-contractivity-2-Fisher-lower-bound} yields
\begin{equation}
\label{eq:proof-contractivity-2-step-1-5}
\begin{IEEEeqnarraybox}[][c]{rCl}
\frac 1N I(\nu^N | m^N_*)
&=& \frac{1-\varepsilon_1-\varepsilon_2}{N}
\sum_{k \in [N]} \Expect[ I(\nu_k | \Pi[\nu_k] ) ]
+ \frac{\varepsilon_2}{2} \Expect[ I(\nu_N | m_*) ] \\
&&\adjustbin - (2 \varepsilon_2 + \varepsilon_4)
\Expect[ \langle R, \bar\nu^{\otimes 2}\rangle]
- \varepsilon_3 \Expect[ \langle R, \nu_N^{\otimes 2}\rangle]
\vphantom{\sum}\\
&&\adjustbin
- \frac{2+\varepsilon_1^{-1}+\varepsilon_4^{-1}+(4\varepsilon_3)^{-1}}{N^2}
\sum_{k \in [N]}
\Expect[ \langle R, (\delta_{X^k} - \nu_k)^{\otimes 2}\rangle ].
\end{IEEEeqnarraybox}
\end{equation}

\proofstep{Step 2: Graded dissipation}
Applying \eqref{eq:mf-contractivity-2} to each $\nu_k$ yields
\[
\frac 1N\sum_{k \in [N]} \Expect[ I(\nu_k | \Pi[\nu_k]) ]
\geqslant \frac{2\lambda}{N} \sum_{k \in [N]}
\Expect[ \mathcal F(\nu_k | m_*) ].
\]
Let $\varepsilon_5 > 0$.
We separate an $\varepsilon_5$ part from the right-hand side:
\begin{IEEEeqnarray*}{rCl}
\sum_{k \in [N]}
\Expect[ \mathcal F(\nu_k | m_*) ]
&=& (1-\varepsilon_5 + \varepsilon_5)\sum_{k \in [N]}
\Expect[ \mathcal F(\nu_k | m_*) ] \\
&=& (1-\varepsilon_5) H(\nu^N | m_*^{\otimes N})
+ (1-\varepsilon_5) \sum_{k \in [N]}
\Expect[ \langle W_*, \nu_k^{\otimes 2}\rangle] \\
&&\adjustbin
+ \varepsilon_5 \sum_{k \in [N]} \Expect [ \mathcal F(\nu_k | m_*) ].
\end{IEEEeqnarray*}
Applying Lemma~\ref{lem:approx-2} separately
to $U = W_*^+$ and $W_*^-$, we deduce that for all $\varepsilon_6 > 0$,
\begin{IEEEeqnarray*}{rCl}
\frac 1N \sum_{k \in [N]}
\Expect[\langle W_*^-, \nu_k^{\otimes 2}\rangle]
&\geqslant& \Expect[ \langle W_*, \mu_{\vect X}^{\otimes 2}\rangle]
- \frac{\varepsilon_6}{2}
\Expect[ \langle W_*^+ + W_*^-, \nu_N^{\otimes 2}\rangle] \\
&&\adjustbin- \frac{1}{2\varepsilon_6N^2}\sum_{k \in [N]}
\Expect[ \langle W_*^+ + W_*^-, (\delta_{X^k} - \nu_k)^{\otimes 2}\rangle] \\
&&\adjustbin
- \frac 1N \Expect[ \langle W_*^+, (\delta_{X^N} - \nu_N)^{\otimes 2}\rangle].
\end{IEEEeqnarray*}
Combining this with \eqref{eq:proof-contractivity-2-step-1-5} yields
\begin{equation}
\label{eq:proof-contractivity-2-step-2-last}
\begin{IEEEeqnarraybox}[][c]{rCl}
\IEEEeqnarraymulticol{3}{l}
{\frac 1N I(\nu^N | m^N_*)} \\
\quad
&\geqslant& \frac{2(1-\varepsilon_1-\varepsilon_2-\varepsilon_5)\lambda}{N}
\mathcal F^N(m^N | m_*)
+ \frac{\varepsilon_2}{2} \Expect[ I(\nu_N | m_*) ] \\
&&\adjustbin + \frac{2\varepsilon_5\lambda}{N}
\sum_{k \in [N]} \Expect[ \mathcal F(\nu_k | m_*) ] \\
&&\adjustbin - (2 \varepsilon_2 + \varepsilon_4)
\Expect[ \langle R, \bar\nu^{\otimes 2}\rangle]
- \varepsilon_3 \Expect[ \langle R, \nu_N^{\otimes 2}\rangle]
- \varepsilon_6 \lambda\Expect[ \langle W_*^+ + W_*^-,
\nu_N^{\otimes 2} \rangle ] \vphantom{\frac 1N}\\
&&\adjustbin
- \frac{2+\varepsilon_1^{-1}+\varepsilon_4^{-1}+(4\varepsilon_3)^{-1}}{N^2}
\sum_{k \in [N]}
\Expect[ \langle R, (\delta_{X^k} - \nu_k)^{\otimes 2}\rangle ] \\
&&\adjustbin - \frac{\lambda}{\varepsilon_6N^2}
\sum_{k \in [N]}
\Expect[ \langle W_*^+ + W_*^-, (\delta_{X^k} - \nu_k)^{\otimes 2}\rangle] \\
&&\adjustbin - \frac{2\lambda}{N}
\Expect[ \langle W_*^+, (\delta_{X^N} - \nu_N)^{\otimes 2}\rangle ].
\end{IEEEeqnarraybox}
\end{equation}

\proofstep{Step 3: Error control}
Choose $\varepsilon_3$, $\varepsilon_6$ as follows:
\begin{align*}
\varepsilon_3
&= \frac{\varepsilon_2}{4(\mu_R + \ell_R)}, \\
\varepsilon_6
&= \frac{\varepsilon_2}{4(M_W + \ell_W)\eta}
\end{align*}
By Lemma~\ref{lem:error-control}, this ensures
\[
\varepsilon_3 \Expect [ \langle R, \nu_N^{\otimes 2} ]
+ \lambda\varepsilon_6
\Expect [ \langle W_*^+ + W_*^-, \nu_N^{\otimes 2}\rangle ]
\leqslant \frac{\varepsilon_2}{2} \Expect [ I(\nu_N | m_*) ].
\]

Define
\begin{align*}
f &= \frac 1N \sum_{k \in [N]}
\Expect[ \mathcal F(\nu_k | m_*) ], \\
E_1 &= (2\varepsilon_2 + \varepsilon_4)\Expect[\langle R,
\bar\nu^{\otimes 2}\rangle],
\end{align*}
and let $E_2$ denote the sum of the last three terms
in \eqref{eq:proof-contractivity-2-step-2-last}.
The latter becomes
\begin{equation}
\label{eq:proof-contractivity-2-step-3-1}
\frac 1N I(\nu^N | m^N_*)
\geqslant \frac{2(1-\varepsilon_1-\varepsilon_2-\varepsilon_5)\lambda}{N}
\mathcal F^N(\nu^N | m_*)
+ 2\varepsilon_5\lambda f- E_1 - E_2.
\end{equation}
Using Lemma~\ref{lem:error-control} and the convexity:
\[
\Expect[H(\bar\nu | m_*)] \leqslant
\frac 1N \sum_{k \in [N]} \Expect[ H(\nu_k | m_*)]
= \frac 1NH(\nu^N |m_*^{\otimes N}),
\]
we obtain
\begin{IEEEeqnarray*}{rCl}
E_1 &\leqslant& \frac{2(2\varepsilon_2 + \varepsilon_4)
(\mu_R + \ell_R)\rho}{N} H(\nu^N | m_*^{\otimes N}), \\
E_2 &\leqslant&
\frac{4(2+\varepsilon_1^{-1}+\varepsilon_4^{-1}+(4\varepsilon_3)^{-1})\rho}{N}
\biggl(\mu_R + \frac{\ell_R}{N} H(\nu^N | m_*^{\otimes N}) + \ell_Rd\biggr) \\
&&\adjustbin +
\frac{4(2+\varepsilon_6^{-1})\lambda}{N}
\biggl(M_W + \frac{\ell_W}{N} H(\nu^N | m_*^{\otimes N}) + \ell_Wd\biggr).
\end{IEEEeqnarray*}
Moreover, \eqref{eq:mf-coercivity} implies
\[
\frac 1NH(\nu^N | m_*^{\otimes N})
= \frac 1N\sum_{k \in [N]} \Expect[ H(\nu_k | m_*) ]
\leqslant
\frac 1{\delta N}\sum_{k \in [N]} \Expect[ \mathcal F(\nu_k | m_*) ]
= \delta^{-1}f.
\]
Let $\varepsilon \in (0,1)$.
We now choose
\begin{align*}
\varepsilon_1 &= \frac{\varepsilon}{3}, \\
\varepsilon_2
&= \frac{\varepsilon}{3\bigl(1+2(\mu_R+\ell_R)/\delta\eta\bigr)}, \\
\varepsilon_4 &= \frac{\delta\eta\varepsilon}{3(\mu_R + \ell_R)}, \\
\varepsilon_5
&= \frac{(2\varepsilon_2+\varepsilon_4)(\mu_R + \ell_R)\rho}{\delta\eta}
+ \frac{2\bigl(2+\varepsilon_1^{-1}+\varepsilon_4^{-1}+(4\varepsilon_3)^{-1}
\bigr) \ell_R}{\delta\eta N}
+ \frac{2(2+\varepsilon_6^{-1})\ell_W}{\delta N}.
\end{align*}
Under these choices,
\begin{IEEEeqnarray*}{rCl}
\varepsilon_1 + \varepsilon_2 + \varepsilon_5
&=& \varepsilon
+ \frac{2\bigl(2+\varepsilon_1^{-1}+\varepsilon_4^{-1}+(4\varepsilon_3)^{-1}
\bigr) \ell_R}{\delta\eta N}
+ \frac{2(2+\varepsilon_6^{-1})\ell_W}{\delta N}, \\
\varepsilon_5 \eta f - \frac{E_1 + E_2}{2\rho}
&\geqslant&
- \frac{2\bigl(2+\varepsilon_1^{-1}+\varepsilon_4^{-1}+(4\varepsilon_3)^{-1}
\bigr)}{N} (\mu_R + \ell_R d) \\
&&\adjustbin - \frac{2(2+\varepsilon_6^{-1})\eta}{N} (M_W + \ell_Wd).
\end{IEEEeqnarray*}
Substituting the expressions for $\varepsilon_i$ in
\eqref{eq:proof-contractivity-2-step-3-1} yields the desired result.
\qed

\subsection{Proof of Corollary~\ref{cor:defective-lsi}}
\label{sec:proof-defective-lsi}
As in the proof of Corollary~\ref{cor:entropy-coercivity}, we observe that
\[
\mathcal{F}^N(\nu^N | m_*) - \mathcal{F}^N(m^N_* | m_*)
= H(\nu^N | m^N_*).
\]
By Theorem~\ref{thm:coercivity},
\[
\mathcal{F}^N(m^N_* | m_*)
\geqslant \delta_N H(m^N_* | m_*^{\otimes N}) - \Delta_{\mathcal{F}}
\geqslant -\Delta_{\mathcal{F}}.
\]
Hence,
\[
\mathcal{F}^N(\nu^N | m_*) \geqslant H(\nu^N | m^N_*) - \Delta_{\mathcal{F}}.
\]
Combining this bound with the conclusions of
Theorems~\ref{thm:contractivity-1} and~\ref{thm:contractivity-2}
proves the defective log-Sobolev inequality.
The exponential convergence of entropy for the Langevin dynamics
then follows by Grönwall's lemma.
\qed

\section{Generation of chaos}
\label{sec:goc}

This section presents the proof of Theorem~\ref{thm:goc}.
In the proof we perform only formal computations,
as in Section~\ref{sec:contractivity}.

\begin{proof}[Proof of Theorem~\ref{thm:goc}]
By the computation of \textcite[Proposition~2.1]{BJWPKSCompteRendu},
differentiating $\mathcal F^N(m^N_t | m_t)$ in time leads to
\begin{equation}
\label{eq:BJW}
\begin{IEEEeqnarraybox}[][c]{rCl}
\IEEEeqnarraymulticol{3}{l}
{\frac{\dd \mathcal F^N(m^N_t | m_t)}{\dd t}} \\
\quad&=& - \sum_{i \in [N]}
\int_{\X^N} {\biggl\lvert \nabla_i \log \frac{m^N_t(\vect x)}{m_t(x^i)}
+ \langle \nabla_1 W(x^i, \cdot), \mu_{\vect x} - m_t\rangle
\biggr\rvert^2 m^N_t(\dd\vect x)} \\
&&\adjustbin
- N \int_{\X^N} \int_{X^2}
\nabla_1 W(y,z) \cdot v_t(z)\,
(\mu_{\vect x} - m_t)^{\otimes 2} (\dd y\dd z)\,
m^N_t(\dd\vect x).
\end{IEEEeqnarraybox}
\end{equation}
By hypothesis, the last line is bounded by
\[
2\gamma(t) \mathcal F^N(m^N_t | m_t) + M(t).
\]
It remains to exploit the negative term on the second line of \eqref{eq:BJW}.

Define $g^N_t$ as the probability measure on $\X^N$ proportional to
\[
\exp \biggl( - \frac{N}{2} \langle W, (\mu_{\vect x} - m_t)^{\otimes 2}
\rangle\biggr) m_t^{\otimes N}(\dd\vect x).
\]
The second line of \eqref{eq:BJW} now reads
\[
- I(m^N_t | g^N_t).
\]
Observe that $g^N_t$ is the $N$-particle Gibbs measure associated to
the free energy functional:
\[
\nu \mapsto H(\nu | m_t) + \frac 12 \langle W, (\nu - m_t)^{\otimes 2}\rangle
= \mathcal F(\nu | m_t),
\]
for which, by its coercivity assumption, $m_t$ is the unique minimizer.
Denote by $W_t$ the reduced kernel associated to $m_t$.
Working under this new free energy amounts to making the following substitution:
\[
m_* \to m_t,\quad W_* \to W_t.
\]
Notably, we have the replacements:
\[
\mathcal F^N(\cdot | m_*) \to \mathcal F^N(\cdot | m_t),
\quad \Pi \to \Pi_{m_t}.
\]
The conditions of Theorem~\ref{thm:contractivity-1}
after substitution are satisfied.
As a result,
\[
I (m^N_t | g^N_t) \geqslant 2\lambda_N(t) \mathcal F^N(m^N_t | m_t)
- \Delta_{I}(t).
\]
Plugging this into \eqref{eq:BJW} and applying Grönwall's lemma
completes the proof.
\end{proof}

\section{Independent projection}
\label{sec:ip}

This section establishes the coercivity and contractivity of the modulated free
energy for the independent projection of Langevin dynamics. The proofs of
Theorems~\ref{thm:ip-coercivity} and~\ref{thm:ip-contractivity} are presented
in the following two subsections, respectively. The key step in both proofs
is to apply mean-field coercivity~\eqref{eq:mf-coercivity}
or contractivity~\eqref{eq:mf-contractivity-1} for suitable mixed measures.

Throughout this section, we perform only formal computations,
as in Section~\ref{sec:contractivity}.

\subsection{Proof of Theorem~\ref{thm:ip-coercivity}}

Define
\[
\bar\xi = \frac 1N \sum_{i \in [N]} \xi^i.
\]
Replacing $\nu$ by $\bar\xi$ in \eqref{eq:mf-coercivity} gives
\[
(1-\delta) H(\bar\xi | m_*)
+ \frac{1}{2N^2} \sum_{i,j\in [N]} \langle W_*, \xi^i \otimes \xi^j\rangle
\geqslant 0.
\]
Thus,
\begin{equation}
\label{eq:proof-ip-coercivity-1}
\begin{IEEEeqnarraybox}[][c]{rCl}
\IEEEeqnarraymulticol{3}{l}
{\sum_{i \in [N]} H(\xi^i | m_*)
+ \frac{1}{2(N-1)} \sum_{i,j \in [N] : i\neq j}
\langle W_*,\xi^i \otimes \xi^j\rangle} \\
\quad&\geqslant&
\sum_{i \in [N]} H(\xi^i | m_*)
- \frac{(1-\delta)N^2}{N-1} H(\bar\xi | m_*)
- \frac{1}{2(N-1)} \sum_{i \in [N]} \langle W_*, (\xi^i)^{\otimes 2}\rangle.
\end{IEEEeqnarraybox}
\end{equation}
The convexity of entropy implies
\begin{align*}
H(\bar\xi | m_*) &\leqslant \frac 1N \sum_{i \in [N]}
H(\xi^i | m_*),
\intertext{while Lemma~\ref{lem:error-control} gives}
\sum_{i \in [N]} \langle W_*, (\xi^i)^{\otimes 2}\rangle
&\leqslant 2(M_W+\ell_W) \sum_{i \in [N]} H(\xi^i | m_*).
\end{align*}
Inserting these estimates into \eqref{eq:proof-ip-coercivity-1} concludes.
\qed

\subsection{Proof of Theorem~\ref{thm:ip-contractivity}}

By Assumption~\ref{assu:LS-Pi},
\[
\sum_{i \in [N]}
I(\xi^i | \Pi[\bar\xi^{-i}])
\geqslant 2\rho_\Pi \sum_{i \in [N]} H(\xi^i | \Pi[\bar\xi^{-i}]),
\]
where the right-hand side satisfies
\begin{multline*}
\sum_{i \in [N]} H(\xi^i | \Pi[\bar\xi^{-i}])
= \sum_{i \in [N]} H(\xi^i | m_*)
+ \frac{1}{N-1}\sum_{i,j \in [N] : i \neq j}
\langle W_*, \xi^i \otimes \xi^j\rangle \\
+ \sum_{i \in [N]}
\log \int_{\X} \exp( - \langle W(x_*, \cdot),
\bar\xi^{-i}\rangle )\, m_*(\dd x_*).
\end{multline*}
Replacing $\nu$ with $\bar\xi^{-i}$ in \eqref{eq:mf-contractivity-1} yields
\[
\log \int_{\X} \exp( - \langle W(x_*, \cdot),
\bar\xi^{-i}\rangle )\, m_*(\dd x_*)
\geqslant - (1-\delta)
H(\bar\xi^{-i} | m_*)
- \biggl(1-\frac\delta 2\biggr)
\langle W_*, (\bar\xi^{-i})^{\otimes 2}\rangle.
\]
Thus by combining the three inequalities above, we obtain
\begin{equation}
\label{eq:proof-ip-contractivity-1}
\begin{IEEEeqnarraybox}[][c]{rCl}
\IEEEeqnarraymulticol{3}{l}
{\frac{1}{2\rho_\Pi}\sum_{i \in [N]}
I(\xi^i | \Pi[\bar\xi^{-i}])} \\
\quad&\geqslant&
\sum_{i \in [N]} H(\xi^i | m_*)
- (1-\delta) \sum_{i \in [N]} H(\bar\xi^{-i} | m_*) \\
&&\adjustbin + \frac{1}{N-1}\sum_{i,j \in [N] : i \neq j}
\langle W_*, \xi^i \otimes \xi^j\rangle
- \biggl(1-\frac\delta2\biggr)
\sum_{i \in [N]}
\langle W_*, (\bar\xi^{-i})^{\otimes 2}\rangle.
\end{IEEEeqnarraybox}
\end{equation}
By the convexity of entropy, the second line satisfies
\begin{multline*}
\sum_{i \in [N]} H(\xi^i | m_*)
- (1-\delta) \sum_{i \in [N]} H(\bar\xi^{-i} | m_*) \\
\geqslant
\sum_{i \in [N]} H(\xi^i | m_*)
- \frac{1-\delta}{N-1} \sum_{i \in [N]} \sum_{j\in [N] : j \neq i}
H(\xi^j | m_*)
\geqslant \delta \sum_{i \in [N]} H(\xi^i | m_*).
\end{multline*}
The third line satisfies
\[
\sum_{i \in [N]}
\langle W_*, (\bar\xi^{-i})^{\otimes 2}\rangle
= \frac{N-2}{(N-1)^2} \sum_{i,j\in [N]:i \neq j}
\langle W_*, \xi^i\otimes\xi^j\rangle
+ \frac 1{N-1} \sum_{i \in [N]}
\langle W_*, (\xi^i)^{\otimes 2}\rangle.
\]
Consequently,
\begin{IEEEeqnarray*}{rCl}
\IEEEeqnarraymulticol{3}{l}
{\frac 1{N-1}\sum_{i,j \in [N] : i \neq j}
\langle W_*, \xi^i \otimes \xi^j\rangle
- \biggl(1-\frac\delta2\biggr)
\sum_{i \in [N]}
\langle W_*, (\bar\xi^{-i})^{\otimes 2}\rangle} \\
\quad&=& \biggl( \frac{\delta}{2(N-1)}
+ \frac{1 - \delta/2}{(N-1)^2}\biggr)
\sum_{i,j \in [N] : i \neq j}
\langle W_*, \xi^i \otimes \xi^j\rangle \\
&&\adjustbin- \frac{1-\delta/2}{N-1}
\sum_{i \in [N]} \langle W_*, (\xi^i)^{\otimes 2}\rangle.
\end{IEEEeqnarray*}
Our hypotheses ensure that $\delta_N$ in Theorem~\ref{thm:ip-coercivity}
is positive.
Thus applying Theorem~\ref{thm:ip-coercivity} and Lemma~\ref{lem:error-control}
yields
\begin{align*}
\frac{1}{2(N-1)}\sum_{i,j \in [N]: i \neq j}
\langle W_*,\xi^i\otimes\xi^j\rangle
&\geqslant -(1-\delta_N)\sum_{i \in [N]} H(\xi^i | m_*)
\geqslant -\sum_{i \in [N]} H(\xi^i | m_*),
\\
\sum_{i \in [N]} \langle W_*, (\xi^i)^{\otimes 2}\rangle
&\leqslant 2(M_W+\ell_W)\sum_{i \in [N]} H(\xi^i | m_*).
\end{align*}
Inserting the estimates for the two lines
into \eqref{eq:proof-ip-contractivity-1} yields
\[
\frac{1}{2\rho_\Pi}
\sum_{i \in [N]}
I(\xi^i | \Pi[\bar\xi^{-i}])
\geqslant \delta \mathcal F^N_{\ind} (\xi^1,\ldots,\xi^N | m_*)
- \frac{2(1+M_W+\ell_W)}{N-1} \sum_{i \in [N]}
H(\xi^i | m_*).
\]
Theorem~\ref{thm:ip-coercivity} further bounds the last term as follows:
\[
\sum_{i \in [N]} H(\xi^i|m_*)
\leqslant \delta_N^{-1} \mathcal{F}^N_{\ind} (\xi^1, \ldots, \xi^N |m_*).
\]
This completes the proof.
\qed

\appendix

\section{Mean-field limit of functionals}
\label{app:recover-mf}

This appendix demonstrates that
taking the limit $N\to\infty$ in the $N$-particle functionals
yields their respective mean-field counterparts.

\begin{lem}
\label{lem:recover-free-energy}
Let Assumption~\ref{assu:W} hold.
Let $\nu \in \mathcal P_2(\X)$ satisfy $\mathcal F(\nu | m_*) < \infty$.
Then,
\[
\lim_{N \to \infty} \frac{\mathcal F^N(\nu^{\otimes N} | m_*)}{N}
= \mathcal F(\nu | m_*).
\]
\end{lem}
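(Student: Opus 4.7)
The plan is to split $\mathcal F^N(\nu^{\otimes N} | m_*)$ according to the centered rewriting \eqref{eq:def-modulated-F-centered}, namely
\[
\mathcal F^N(\nu^{\otimes N} | m_*)
= H(\nu^{\otimes N} | m_*^{\otimes N}) + \frac N2
\Expect_{\vect X \sim \nu^{\otimes N}}[ \langle W_*, \mu_{\vect X}^{\otimes 2}\rangle ],
\]
and analyze the two pieces independently. For the entropy piece, the tensorization identity $H(\nu^{\otimes N} | m_*^{\otimes N}) = N H(\nu | m_*)$ yields, upon dividing by $N$, the exact value $H(\nu | m_*)$ for every $N$, so this term already matches its mean-field counterpart.

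For the energy piece, I would expand the empirical product. With $X^1, \ldots, X^N$ i.i.d.\ $\sim\nu$,
\[
\Expect\bigl[\langle W_*, \mu_{\vect X}^{\otimes 2}\rangle\bigr]
= \frac{1}{N^2}\sum_{i \neq j}\langle W_*, \nu^{\otimes 2}\rangle
+ \frac{1}{N^2}\sum_{i}\Expect_{X \sim \nu}[W_*(X,X)]
= \frac{N-1}{N}\langle W_*, \nu^{\otimes 2}\rangle
+ \frac{1}{N}\Expect_{X \sim \nu}[W_*(X,X)].
\]
Multiplying by $N/2$ and dividing by $N$, the off-diagonal contribution converges to $\frac12\langle W_*, \nu^{\otimes 2}\rangle$, while the diagonal contribution reduces to $\frac{1}{2N}\Expect_{X \sim \nu}[W_*(X,X)]$.

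The only nontrivial step is to verify that the diagonal term vanishes as $N \to \infty$, which amounts to showing that $\Expect_{X \sim \nu}[W_*(X,X)]$ is finite. For $\X = \T^d$ this is automatic since the bounded decomposition of Assumption~\ref{assu:W} gives $\lvert W\rvert \leqslant C$ and hence $\lvert W_*\rvert \leqslant C'$. For $\X = \R^d$, I would invoke the decomposition $W = W_{\bd}^+ - W_{\bd}^- + W_{\qd}^+ - W_{\qd}^-$: the bounded components are controlled by $M_W$ after reduction, while the quadratic components satisfy $\lvert W_{\qd,*}^s(x,x)\rvert \leqslant C(1+\lvert x\rvert^2)$ by integrating the $L^\infty$ bound on $\nabla^2_{1,2} W_{\qd}^s$ twice. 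Since $\nu \in \mathcal P_2(\X)$, this last bound is integrable and the diagonal term is $O(1/N)$.

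Combining the entropy identity and the energy limit produces the claimed convergence to $H(\nu | m_*) + \frac12 \langle W_*, \nu^{\otimes 2}\rangle = \mathcal F(\nu | m_*)$. The hypothesis $\mathcal F(\nu | m_*) < \infty$ is used only to ensure finiteness of both $H(\nu | m_*)$ and $\langle W_*, \nu^{\otimes 2}\rangle$, so that the two limits can be added without ambiguity. The main (minor) obstacle is thus the diagonal bookkeeping in the unbounded setting; everything else is an elementary i.i.d.\ computation.
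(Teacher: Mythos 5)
Your proposal is correct and follows essentially the same route as the paper: tensorize the entropy exactly, split the energy term into off-diagonal and diagonal contributions, and show the $O(1)$ diagonal remainder is finite via the bounded/quadratic decomposition of Assumption~\ref{assu:W} together with $\nu \in \mathcal P_2(\X)$. The paper merely packages the remainder as $\frac 12\int_{\X}\langle W, (\delta_x - \nu)^{\otimes 2}\rangle\,\nu(\dd x)$ and bounds it by $4M_W + 2L_W\Var\nu$, which is the same estimate you obtain.
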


\begin{proof}[Proof of Lemma~\ref{lem:recover-free-energy}]
By definition,
\begin{align*}
\mathcal F^N(m^{\otimes N} | m_*)
&= H(\nu^{\otimes N} | m_*^{\otimes N})
+ \frac{N}{2}
\int_{\X^N} { \langle W_*, \mu_{\vect x}^{\otimes 2}\rangle\,\nu^{\otimes N}
(\dd\vect x)} \\
&= N H(\nu | m_*)
+ \frac{N}{2} \langle W_*, \nu^{\otimes 2}\rangle
+ \frac 12\int_{\X} {\langle W, (\delta_{x}-\nu)^{\otimes 2}\rangle\,
\nu(\dd x)}.
\end{align*}
Separating the positive and negative components in the last term,
we find
\[
\biggl\lvert
\int_{\X} {\langle W, (\delta_{x}-\nu)^{\otimes 2}\rangle\,\nu(\dd x)}
\biggr\rvert
\leqslant 4M_W + 2L_W \Var \nu.
\]
Dividing $\mathcal F^N(\nu^{\otimes N} | m_*)$ by $N$
and letting $N\to\infty$ completes the proof.
\end{proof}

\begin{lem}
\label{lem:recover-Fisher}
Let Assumption~\ref{assu:R} hold.
Let $\nu \in \mathcal P_2(\X)$ satisfy $I(\nu | m_*) < \infty$.
Then,
\[
\lim_{N \to \infty} \frac{I(m^{\otimes N} | m^N_*)}{N}
= I(\nu | \Pi[\nu]).
\]
\end{lem}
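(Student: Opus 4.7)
The plan is to expand the Fisher information on the left-hand side using the explicit form of $m^N_*$ and identify $I(\nu | \Pi[\nu])$ as the leading term, with all remainders of order $O(1/N)$ coming from the discrepancy $\mu_{\vect x} - \nu$.

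I would first compute the $i$-th logarithmic gradient. Using the definition of $m^N_*$ and $\Pi[\nu]$ in \eqref{eq:def-Pi}, a direct calculation gives
\[
\nabla_i \log \frac{\nu^{\otimes N}(\vect x)}{m^N_*(\vect x)}
= \nabla \log \frac{\nu}{\Pi[\nu]}(x^i)
+ \langle \nabla_1 W(x^i, \cdot), \mu_{\vect x} - \nu\rangle.
\]
By the exchangeability of $\nu^{\otimes N}$, summing the squared norms over $i \in [N]$ and dividing by $N$ yields
\[
\frac{I(\nu^{\otimes N}|m^N_*)}{N}
= \Expect\Bigl[ \Bigl\lvert \nabla \log \tfrac{\nu}{\Pi[\nu]}(X^1)
+ \langle \nabla_1 W(X^1,\cdot), \mu_{\vect X} - \nu\rangle \Bigr\rvert^2 \Bigr],
\]
where $\vect X \sim \nu^{\otimes N}$.

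Then I would expand the square. The leading term is $I(\nu | \Pi[\nu])$. Assumption~\ref{assu:R} gives the pointwise bound $\lvert \langle \nabla_1 W(y,\cdot), \mu_{\vect X} - \nu\rangle\rvert^2 \leqslant \langle R, (\mu_{\vect X} - \nu)^{\otimes 2}\rangle$, and the independence of the $X^i$ under $\nu^{\otimes N}$ allows the direct computation
\[
\Expect[ \langle R, (\mu_{\vect X} - \nu)^{\otimes 2}\rangle ]
= \frac{1}{N} \Expect_{X \sim \nu}[ \langle R, (\delta_X - \nu)^{\otimes 2}\rangle ],
\]
which is $O(1/N)$ since the right-hand expectation is finite under the quadratic growth of $R$ and the assumption $\nu \in \mathcal P_2(\X)$. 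This controls the pure remainder term. For the cross term, conditioning on $X^1$ and using $\Expect[\delta_{X^i} - \nu\mid X^1] = 0$ for $i \neq 1$ reduces it to a single-particle integral with a prefactor $1/N$, and Cauchy--Schwarz together with the preceding bound finishes the estimate.

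The only subtlety, rather than a real obstacle, is checking that the limit $I(\nu | \Pi[\nu])$ is itself finite. This follows from the decomposition
\[
\nabla \log \tfrac{\nu}{\Pi[\nu]}(x)
= \nabla \log \tfrac{\nu}{m_*}(x)
+ \langle \nabla_1 W(x,\cdot), \nu - m_*\rangle
\]
combined with Assumption~\ref{assu:R}, which yields $I(\nu|\Pi[\nu]) \leqslant 2I(\nu|m_*) + 2\langle R, (\nu-m_*)^{\otimes 2}\rangle < \infty$ under the hypotheses. The proof is then concluded by letting $N \to \infty$ in the expansion.
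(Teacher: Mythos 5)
Your proposal is correct and follows essentially the same route as the paper: write the per-particle relative Fisher information as $\Expect[\lvert\nabla\log\frac{\nu}{\Pi[\nu]}(X^1)+\langle\nabla_1W(X^1,\cdot),\mu_{\vect X}-\nu\rangle\rvert^2]$, identify $I(\nu|\Pi[\nu])$ as the leading term, and kill the remainder via Assumption~\ref{assu:R} together with the independence computation $\Expect[\langle R,(\mu_{\vect X}-\nu)^{\otimes 2}\rangle]=O(1/N)$. The only cosmetic difference is that the paper sandwiches the square with a weighted Cauchy--Schwarz in an auxiliary $\varepsilon$ and lets $\varepsilon\to 0$ at the end, whereas you expand exactly and handle the cross term by conditioning on $X^1$; both are valid, and your finiteness check for $I(\nu|\Pi[\nu])$ is a welcome addition.
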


\begin{proof}[Proof of Lemma~\ref{lem:recover-Fisher}]
By Step~1 of the proof of Theorem~\ref{thm:contractivity-1},
\begin{multline*}
\frac{I(\nu^{\otimes N} | m^N_*)}{N} \\
= \Expect\biggl[
\int_{\X} {\biggl\lvert
\nabla \log \frac{\nu}{m_*} (y)
+ \biggl\langle \nabla_1 W_*(y,\cdot),
\frac{N-1}{N} \mu_{\vect X^{[N-1]}} + \frac 1N \delta_y
\biggr\rangle \biggr\rvert^2\, \nu(\dd y)\biggr]},
\end{multline*}
where $\vect X$ is distributed according to $\nu^{\otimes N}$.
Performing the change of measure
\[
\mu_{\vect X^{[N-1]}} + \frac 1N \delta_y \to \nu
\]
and applying Cauchy--Schwarz inequalities, we obtain
\[
\biggl\lvert \frac{I(\nu^N | m^N_*)}{N} - I(\nu | \Pi[\nu]) \biggr\rvert
\leqslant \varepsilon I(\nu | \Pi[\nu] )
+ \frac{1 + \varepsilon^{-1}}{N}
\Expect [ \langle R, (\mu_{\vect X} - \nu)^{\otimes 2} \rangle ],
\]
where $\varepsilon \in (0,1)$ is arbitrary.
As in the proof of Lemma~\ref{lem:recover-free-energy},
the associated error term satisfies
\[
\Expect[ \langle R, (\mu_{\vect X} - \nu)^{\otimes 2}\rangle ]
\leqslant \frac{4M_R + 2L_R \Var \nu}{N}.
\]
Letting $N \to \infty$ and using the arbitrariness
of $\varepsilon$ completes the proof.
\end{proof}

\section{Mode decomposition}
\label{app:mode-decomposition}

This appendix considers interactions that admit a mode decomposition:
\[
W(x,y) = -J r(x) \cdot r(y),
\]
where $r \colon \X \to \R^n$ and $J > 0$,
and derives criteria
for conditions~\eqref{eq:mf-coercivity}
and \eqref{eq:mf-contractivity-1}.
This is a special case
of the decomposition discussed in Remark~\ref{rem:bbd-spectral}.

Let $\nu \in \mathcal P_2(\X)$. For $ h \in \R^n$, we write
\[
T_h \nu(\dd x) = \frac{e^{h \cdot r(x)}\,\nu(\dd x)}
{\int_{\X} e^{h \cdot r(x')}\,\nu(\dd x')},
\]
whenever $T_h\nu$ is well defined as a probability measure.
We call such measures \emph{tilts} for $\nu$.
We suppose in particular that $T_hm_*$ is well defined for all $h \in \R^n$.
By definition, local equilibria are tilts:
\[
\Pi[\nu] = T_{\langle r, \nu - m_*\rangle} m_*.
\]
Define $f_* \colon \R^n \to \R$ as follows:
\begin{equation}
\label{eq:def-f}
f_*(h) = \log \int_{\T^1} e^{h \cdot r(\theta)}\,m_*(\dd \theta).
\end{equation}
We suppose that $f_*$ is $\mathcal C^\infty$ on $\R^n$ and
we refer to it as the \emph{magnetic Helmholtz free energy}.
Differentiating $f$, we find
\begin{align*}
\nabla f_*(h) &= \langle r, T_h m_*\rangle, \\
\nabla^2 f_*(h) &= \Cov_{T_h m_*} r
= \langle r^{\otimes 2}, T_h m_*\rangle
- \langle r, T_h m_*\rangle^{\otimes 2}.
\end{align*}
By an abuse of notation, we write
\[
\Cov T_h m_* = \Cov_{T_h m_*} r.
\]
Since $m_*$ has density, so does $T_h m_*$. Therefore for all $h \in \R^n$,
\[
\nabla^2 f(h) = \Cov {T_h m_*} > 0.
\]
Consequently, the image set $\Image \nabla f_*$ is open
and the Legendre transform
\begin{equation}
\label{eq:def-g}
g_*(\mu) = \sup_{h \in \R^n} h\cdot\mu - f_*(h) \in (-\infty, +\infty]
\end{equation}
is well defined.
We call $g_*$ the \emph{magnetic Gibbs free energy}
for the reference measure $m_*$.
Define
\begin{equation}
\label{eq:def-mu-h}
\mu^*_h = \langle r, T_h m\rangle = \nabla f_*(h).
\end{equation}
This is the $\mu$ variable associated to $h$ via the Legendre transform.
On $\Image \nabla f_*$,
the function $g_*$ is finite and $\mathcal C^\infty$,
and satisfies, in particular,
\begin{align*}
\nabla g_*(\mu) |_{\mu = \mu^*_h} &= h, \\
\nabla^2 g_*(\mu) |_{\mu = \mu^*_h} &= \nabla^2 f_*(h)^{-1},
\end{align*}
where $\nabla^2 f_*(h)^{-1}$ denotes the inverse matrix of
$\nabla^2 f_*(h)$.

The values of the mean-field functionals on tilts
can be expressed in terms of the finite-dimensional functions $f_*$ and $g_*$,
together with the mapping $h \mapsto \mu^*_h$.

\begin{lem}
\label{lem:functional-tilts}
For all $h \in \R^n$,
\begin{align*}
\langle W, (T_h m_* - m_*)^{\otimes 2}\rangle
&= -J \lvert \mu^*_h - \mu^*_0\rvert^2 \\
H(T_h m_* | m_*) &= g_*(\mu^*_h), \\
H(T_h m_* | \Pi[T_h m_*])
&= g_*(\mu^*_h) + f_*\bigl(J(\mu^*_h-\mu^*_0)\bigr)
- J\mu^*_0 \cdot (\mu^*_h - \mu^*_0)
- J\lvert \mu^*_h - \mu^*_0\rvert^2.
\end{align*}
\end{lem}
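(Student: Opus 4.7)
\medskip

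The plan is to unwind the definitions and use that $\Pi[\nu]$ is itself a tilt of $m_*$ whenever $W$ has rank-one mode decomposition. All three identities are then obtained by direct computation; the only non-trivial observation is identifying the tilt parameter for $\Pi[T_h m_*]$.

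For the first identity, I would simply expand
\[
\langle W, (T_hm_* - m_*)^{\otimes 2}\rangle
= -J\int r(x)\cdot r(y)\,(T_hm_* - m_*)(\dd x)(T_hm_* - m_*)(\dd y)
= -J\lvert\mu^*_h - \mu^*_0\rvert^2,
\]
using the definition \eqref{eq:def-mu-h} of $\mu^*_h$. For the second identity, the log-density of a tilt reads $\log\frac{T_h m_*}{m_*}(x) = h\cdot r(x) - f_*(h)$ by \eqref{eq:def-f}, so integrating against $T_h m_*$ yields $H(T_h m_* | m_*) = h\cdot\mu^*_h - f_*(h)$, which equals $g_*(\mu^*_h)$ by the Legendre duality \eqref{eq:def-g} at the point $\mu^*_h = \nabla f_*(h)$ where the supremum is attained.

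The main step is the third identity, and the key observation is that $\Pi[T_h m_*]$ is again a tilt of $m_*$. Indeed, from \eqref{eq:def-Pi-centered} and the self-consistency $m_* = \Pi[m_*]$, we have $\Pi[\nu](\dd x) \propto \exp(-\langle W_*(x,\cdot),\nu\rangle)\,m_*(\dd x)$, and expanding $W_*$ with $W(x,y) = -Jr(x)\cdot r(y)$ gives
\[
\Pi[\nu] = T_{J(\langle r,\nu\rangle - \mu^*_0)} m_*.
\]
Applying this with $\nu = T_h m_*$ identifies $\Pi[T_h m_*] = T_{J(\mu^*_h - \mu^*_0)} m_*$. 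Writing
\[
\log \frac{T_h m_*}{\Pi[T_h m_*]}(x)
= \bigl(h\cdot r(x) - f_*(h)\bigr)
- \bigl(J(\mu^*_h - \mu^*_0)\cdot r(x) - f_*(J(\mu^*_h-\mu^*_0))\bigr),
\]
integrating against $T_h m_*$, and invoking the second identity gives
\[
H(T_h m_* | \Pi[T_h m_*])
= g_*(\mu^*_h) + f_*(J(\mu^*_h - \mu^*_0)) - J(\mu^*_h - \mu^*_0)\cdot\mu^*_h.
\]
A final algebraic rearrangement $J(\mu^*_h - \mu^*_0)\cdot\mu^*_h = J\lvert\mu^*_h - \mu^*_0\rvert^2 + J\mu^*_0\cdot(\mu^*_h - \mu^*_0)$ yields the stated form.

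There is no real obstacle; the verification is purely computational and the only conceptual point is that the mean-field equilibrium condition allows $\Pi$ to act on tilts by shifting the tilt parameter by $J(\mu^*_h - \mu^*_0)$. The assumption that $T_h m_*$ is well defined for all $h \in \R^n$ guarantees that every quantity appearing above is finite.
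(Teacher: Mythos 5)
Your proof is correct and takes essentially the same route as the paper's: the first two identities are the identical direct computations, and for the third you merely reorganize the paper's calculation by identifying $\Pi[T_h m_*]$ as the tilt $T_{J(\mu^*_h-\mu^*_0)}m_*$ and computing the relative entropy between two tilts, instead of evaluating the log-normalizing term $\log\int\exp(-\langle W_*(x,\cdot),T_hm_*-m_*\rangle)\,m_*(\dd x)$ separately. The underlying algebra (the reduced kernel $W_*$, the Legendre relation $h\cdot\mu^*_h-f_*(h)=g_*(\mu^*_h)$, and the final rearrangement) is the same in both.
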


\begin{proof}[Proof of Lemma~\ref{lem:functional-tilts}]
Since $W(x,y) = - J r(x)\cdot r(y)$,
the interaction energy satisfies
\[
\langle W, (T_h m_* - m_*)^{\otimes 2}\rangle
= - J \langle r^{\otimes 2}, (T_h m_* - m_*)^{\otimes 2}\rangle
= - J \lvert \mu^*_h - \mu^*_0 \rvert^2.
\]
For the second equality, by the definition of $T_h m$,
\[
\log\frac{T_hm_*}{m_*} = h \cdot r - f_*(h).
\]
Hence,
\[
H(T_h m_* | m_*) = \biggl\langle \log \frac{T_hm_*}{m_*}, T_h m_* \biggr\rangle
= h \cdot \mu^*_h - f_*(h) = g_*(\mu^*_h).
\]
The last quantity satisfies
\begin{IEEEeqnarray*}{rCl}
H(T_h m_* | \Pi[T_h m_*])
&=& H(T_h m_* | m_*) + \langle W_*, (T_h m_* - m_*)^{\otimes 2}\rangle \\
&&\adjustbin + \log \int_{\X}
\exp \bigl( - \langle W_*(x, \cdot), T_h m_* - m_*\rangle\bigr)
\,m_*(\dd x).
\end{IEEEeqnarray*}
Since $\langle W_*,(T_hm_* - m_*)^{\otimes 2}\rangle
= \langle W,(T_hm_*-m_*)^{\otimes 2}\rangle$,
the terms on the first line equal
$g_*(\mu^*_h)$ and $- J\lvert\mu^*_h - \mu^*_0\rvert^2$, respectively.
The last line satisfies
\begin{IEEEeqnarray*}{rCl}
\IEEEeqnarraymulticol{3}{l}
{\log \int_{\X}
\exp \bigl( - \langle W_*(x, \cdot), T_h m_* - m_*\rangle\bigr)
m_*(\dd\theta)}\\
\quad&=&\log \int_{\X}
\exp \Bigl( J \langle r, T_h m_* - m_*\rangle\cdot \bigl(r(x)
- \langle r,m_*\rangle\bigr)
\Bigr)\, m_*(\dd x)\\
&=& f_*(J \langle r, T_h m_* - m_*\rangle)
- J\langle r, T_hm_* - m_*\rangle\cdot \langle r, m_*\rangle \\
&=& f_*\bigl(J (\mu^*_h - \mu^*_0) \bigr)
- J\mu^*_0 \cdot (\mu^*_h - \mu^*_0)
\end{IEEEeqnarray*}
This completes the proof.
\end{proof}

The next lemma shows that verifying the coercivity
and free energy conditions reduces to checking them for tilts.
This property is referred to as the principle of maximum entropy by
\textcite[Fact~30]{ChenEldanLocalizationSchemes}.

\begin{lem}
\label{lem:maximum-entropy}
Let $\delta \in (0,1)$.
The condition \eqref{eq:mf-coercivity} holds if and only if
\begin{alignat}{3}
&\forall h \in\R^n,&\qquad
\mathcal F(T_h m_* | m_*) &\geqslant \delta H(T_h m_* | m_*),
\tag{Coer$'$} \label{eq:Coer-p} \\
\intertext{Moreover, the condition \eqref{eq:mf-contractivity-1}
holds if and only if}
&\forall h \in \R^n,&\qquad
H(T_hm_* | \Pi[T_hm_*]) &\geqslant \delta \mathcal F(T_h m_*|m_*),
\tag{FE$'$} \label{eq:FE-p}
\end{alignat}
\end{lem}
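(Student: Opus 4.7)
The \emph{only if} direction is immediate, since $T_h m_* \in \mathcal{P}_2(\X)$ for each $h \in \R^n$.

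For the \emph{if} direction, the plan is to reduce an arbitrary $\nu \in \mathcal{P}_2(\X)$ to a canonically associated tilt by a Pythagorean identity within the exponential family $\{T_h m_*\}_{h \in \R^n}$. Since $W(x,y) = -J r(x) \cdot r(y)$, the reduced kernel is $W_*(x,y) = -J(r(x)-\mu^*_0) \cdot (r(y)-\mu^*_0)$, so both
\[
\langle W_*, \nu^{\otimes 2}\rangle = -J\lvert\langle r, \nu\rangle - \mu^*_0\rvert^2
\qquad\text{and}\qquad
\Pi[\nu] = T_{J(\langle r, \nu\rangle - \mu^*_0)} m_*
\]
depend on $\nu$ only through the moment $\langle r, \nu\rangle$.

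Given $\nu$ with $\langle r, \nu\rangle \in \Image \nabla f_*$, I would select $h = h(\nu)$ so that $\langle r, T_h m_*\rangle = \langle r, \nu\rangle$; existence and uniqueness follow from strict convexity of $f_*$. Because $\log(T_h m_*/m_*) = h \cdot r - f_*(h)$ is affine in $r$, integrating against $\nu$ and $T_h m_*$ and subtracting yields the Pythagorean identity
\[
H(\nu \mid m_*) = H(\nu \mid T_h m_*) + H(T_h m_* \mid m_*);
\]
applying the same identity with $T_h m_*$ replaced by the tilt $\Pi[\nu] = \Pi[T_h m_*]$ then gives $H(\nu \mid \Pi[\nu]) - H(T_h m_* \mid \Pi[T_h m_*]) = H(\nu \mid m_*) - H(T_h m_* \mid m_*)$. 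Combining these with the equality of interaction energies noted above, one obtains the two decompositions
\[
\mathcal{F}(\nu \mid m_*) = \mathcal{F}(T_h m_* \mid m_*) + H(\nu \mid T_h m_*),
\]
\[
H(\nu \mid \Pi[\nu]) = H(T_h m_* \mid \Pi[T_h m_*]) + H(\nu \mid T_h m_*).
\]

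For \eqref{eq:mf-coercivity}, subtracting $\delta H(\nu\mid m_*)$ from the first decomposition and using \eqref{eq:Coer-p} gives
\[
\mathcal{F}(\nu \mid m_*) - \delta H(\nu \mid m_*) = \bigl[\mathcal{F}(T_h m_* \mid m_*) - \delta H(T_h m_* \mid m_*)\bigr] + (1-\delta) H(\nu \mid T_h m_*),
\]
which is nonnegative since $\delta \leq 1$ and $H(\nu \mid T_h m_*) \geq 0$. For \eqref{eq:mf-contractivity-1},
\[
H(\nu \mid \Pi[\nu]) - \delta \mathcal{F}(\nu \mid m_*) = \bigl[H(T_h m_* \mid \Pi[T_h m_*]) - \delta \mathcal{F}(T_h m_* \mid m_*)\bigr] + (1-\delta) H(\nu \mid T_h m_*),
\]
which is nonnegative by \eqref{eq:FE-p} and the same sign argument.

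The main obstacle is the existence of the matching tilt: one must ensure that $\langle r, \nu\rangle$ actually lies in $\Image \nabla f_*$ whenever the inequalities are non-trivial. By the Gibbs variational principle, $H(\nu \mid m_*) < \infty$ forces $\langle r, \nu\rangle$ to belong to the closed convex hull of the support of $r_{\#}m_*$, and the interior of this convex hull coincides with $\Image \nabla f_*$. Boundary cases can then be handled either by an approximation argument (perturb $\nu$ by a small fraction of $m_*$ to push its moment into $\Image \nabla f_*$, then pass to the limit using lower semicontinuity of the relevant functionals) or by observing that if $\langle r, \nu\rangle$ lies strictly outside $\overline{\Image \nabla f_*}$ then $H(\nu\mid m_*) = +\infty$ and both inequalities are vacuous.
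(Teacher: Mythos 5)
Your proof is correct, and it rests on the same reduction as the paper's: for any $\nu$ one passes to the tilt $T_h m_*$ with the same moment $\langle r,\nu\rangle$, using that both the interaction energy and $\Pi[\nu]$ depend on $\nu$ only through that moment. The difference is in how the reduction is justified. The paper bounds $\mathcal F(\nu|m_*)-\delta H(\nu|m_*)$ below by its infimum over the moment-constrained class and identifies the minimizer as a tilt via the Lagrange first-order condition, asserting (without detailed argument) that the constrained minimizer exists and is attained there. You instead write the exact Pythagorean decomposition $H(\nu|m_*)=H(\nu|T_hm_*)+H(T_hm_*|m_*)$ (and its analogue with $\Pi[\nu]=\Pi[T_hm_*]$, which is again a tilt), yielding the identities $\mathcal F(\nu|m_*)-\delta H(\nu|m_*)=\bigl[\mathcal F(T_hm_*|m_*)-\delta H(T_hm_*|m_*)\bigr]+(1-\delta)H(\nu|T_hm_*)$ and similarly for the free energy condition. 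This buys two things: it bypasses the existence-of-minimizer issue entirely, and it makes explicit where the hypothesis $\delta<1$ enters (the sign of the $(1-\delta)H(\nu|T_hm_*)$ remainder). The one point you must genuinely supply, which the paper's variational phrasing also quietly relies on, is the existence of the matching tilt, i.e.\ that $\langle r,\nu\rangle\in\Image\nabla f_*$ whenever the inequalities are non-vacuous; your treatment via the identification of $\Image\nabla f_*$ with the interior of the convex hull of $\mathrm{supp}(r_\#m_*)$ for this regular exponential family, together with the mixing-with-$m_*$ approximation for boundary moments, is adequate (for the limit one uses convexity of $H(\cdot|m_*)$ for the upper bound, lower semicontinuity for the lower bound, and continuity of the moment-dependent terms). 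So the argument is sound and, if anything, tighter than the paper's.
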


\begin{proof}[Proof of Lemma~\ref{lem:maximum-entropy}]
By specialization,
the conditions \eqref{eq:mf-coercivity} and \eqref{eq:mf-contractivity-1}
imply \eqref{eq:Coer-p} and \eqref{eq:FE-p}, respectively.
Therefore it suffices to show the converse implications.

Let \eqref{eq:Coer-p} hold and let $\nu \in \mathcal P(\T)$ be arbitrary.
Trivially,
\begin{IEEEeqnarray*}{rCl}
\mathcal F(\nu | m_*)
- \delta H(\nu | m_*) &\geqslant&
\inf_{\nu' : \langle r,\nu'\rangle = \langle r,\nu\rangle}
\mathcal F(\nu' | m_*)
- \delta H(\nu' | m_*).
\intertext{The relation $\langle r,\nu'\rangle = \langle r,\nu\rangle$
can be interpreted as a macroscopic constraint.
The functional to minimize reads}
\mathcal F(\nu' | m_*) - \delta H(\nu' | m_*)
&=& (1-\delta) H(\nu' | m_*)
- \frac J2 \lvert \langle r, \nu' - m_*\rangle\rvert^2,
\end{IEEEeqnarray*}
and its constrained first-order optimality condition writes
\[
(1-\delta) \log \frac{\nu'}{m_*}
- J \langle r, \nu' - m_*\rangle r + \ell r = \text{constant},
\]
where $\ell$ is the Lagrange multiplier.
In other words, $\nu' = T_h m_*$ for
\[
h = \frac{J\langle r,\nu' - m_*\rangle}{1-\delta} - \frac{\ell}{1-\delta}.
\]
The condition $\delta \in (0,1)$ ensures that the optimal $\nu'$ exists
and satisfies the first-order condition.
Since $\nu'$ is a tilt, the condition \eqref{eq:Coer-p} gives
\[
(1-\delta) \mathcal F(\nu' | m_*) - \delta H(\nu' | m_*) \geqslant 0.
\]
This establishes \eqref{eq:mf-coercivity}.
A similar argument shows that \eqref{eq:FE-p}
implies \eqref{eq:mf-contractivity-1}.
\end{proof}

We now consider the free energy landscape
$\mathcal F(\cdot | m)$,
where we replace $m_*$ with another distribution $m \in \mathcal P_2(\X)$.
Define $f \colon \R^n \to \R$
by replacing $m_*$ with $m$ in \eqref{eq:def-f}.
The mappings $g \colon \R^n \to \R$, $\mu_{\cdot} \colon \R^n \to \R^n$
are defined by corresponding substitutions
in \eqref{eq:def-g} and \eqref{eq:def-mu-h}, respectively.
All results above, particularly Lemmas~\ref{lem:functional-tilts}
and \ref{lem:maximum-entropy}, remain true under these substitutions.
Denote also by $\id_{n \times n}$ the $n \times n$ identity matrix.
The following lemma provides criteria
for the coercivity and contractivity of $\mathcal F(\cdot | m)$,
namely \eqref{eq:Coer-m} and \eqref{eq:FE-m},
via perturbation arguments.

\begin{lem}
\label{lem:xy-perturbed-coer-fe}
Suppose that for all $h \in \R^n$,
\[
f_*(h) = \varphi( \lvert h \rvert)
\]
for some $\varphi \colon [0,\infty) \to [0,\infty)$
satisfying $\varphi'$, $\varphi'' > 0$ and $\varphi''' < 0$,
with $\varphi(0)=\varphi'(0) = 0$ and $\varphi''(0) = 1/\Jc > 0$.
Suppose $\alpha^{-1} m_* \leqslant m \leqslant \alpha m_*$
for some $\alpha \in [1, \sqrt{\Jc/J})$.
Then the condition \eqref{eq:Coer-m} holds for
\[
\delta = 1 - \frac {\alpha^2J}{\Jc}.
\]
Moreover, if additionally $2\alpha^2 < J/\Jc + \Jc/J$,
then the condition \eqref{eq:FE-m} holds for
\[
\delta = \frac{\Jc^2 - 2\alpha^2\Jc J + J^2}{\Jc(\Jc - \alpha^2 J)}
\]
\end{lem}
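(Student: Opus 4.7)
By Lemma~\ref{lem:maximum-entropy} adapted with $m$ in place of $m_*$, it suffices to verify \eqref{eq:Coer-m} and \eqref{eq:FE-m} for tilted measures $\nu = T_h m$, $h \in \R^n$. Set $f(h) = \log \int_{\X} e^{h \cdot r(x)}\,m(\dd x)$, let $g = f^*$ denote its Legendre dual, and write $\mu_h = \nabla f(h)$, $\mu_0 = \nabla f(0)$, $y = \mu_h - \mu_0$. An identical computation to Lemma~\ref{lem:functional-tilts} (with $m$ replacing $m_*$ throughout) then yields the finite-dimensional identities $\mathcal F(T_h m | m) = g(\mu_h) - \frac{J}{2}|y|^2$ and $H(T_h m | \Pi_m[T_h m]) = g(\mu_h) + f(Jy) - J\mu_0 \cdot y - J|y|^2$.

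The pivotal quantitative input is the Hessian bound $\nabla^2 f(h) \preceq (\alpha^2/\Jc)\,\id_{n\times n}$ uniformly in $h$, from which $g$ inherits $(\Jc/\alpha^2)$-strong convexity. I would derive it as follows. The density bound $\alpha^{-1} m_* \leq m \leq \alpha m_*$ transfers directly to tilts, yielding $\alpha^{-2} T_h m_* \leq T_h m \leq \alpha^2 T_h m_*$, whence, by the monotonicity of the variance $\Var_\nu(v \cdot r) = \inf_c \int (v \cdot r - c)^2 \dd\nu$ in the density, $\nabla^2 f(h) = \Cov_{T_h m} r \preceq \alpha^2 \nabla^2 f_*(h)$. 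The radial structure $f_*(h) = \varphi(|h|)$ gives $\nabla^2 f_*(h)$ with radial eigenvalue $\varphi''(|h|)$ and tangential eigenvalue $\varphi'(|h|)/|h|$, both bounded by $\varphi''(0) = 1/\Jc$: the former because $\varphi''$ is decreasing (as $\varphi''' < 0$), the latter by concavity of $\varphi'$ with $\varphi'(0) = 0$. For the coercivity, the reduced inequality $(1-\delta) g(\mu_h) \geq \frac{J}{2}|y|^2$ then follows from $g(\mu_0) = 0$, $\nabla g(\mu_0) = 0$ (dual to $\nabla f(0) = \mu_0$) combined with the strong-convexity bound $g(\mu_h) \geq \frac{\Jc}{2\alpha^2}|y|^2$, yielding $\delta = 1 - \alpha^2 J/\Jc$.

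For the contractivity---the main obstacle---Legendre-duality manipulations recast the reduced inequality as a Bregman-divergence estimate $D_g(\mu_h, \mu_{Jy}) \geq \delta \bigl[g(\mu_h) - \frac{J}{2}|y|^2\bigr]$, where $\mu_{Jy} = \nabla f(Jy)$ and $D_g(a,b) = g(a) - g(b) - \nabla g(b) \cdot (a-b)$; indeed, a direct computation shows $H(T_h m | \Pi_m[T_h m]) = g(\mu_h) + f(Jy) - Jy \cdot \mu_h = D_g(\mu_h, \mu_{Jy})$. To lower-bound the left-hand side, I would invoke the cocoercivity of the gradient of an $(\alpha^2/\Jc)$-smooth convex function, which gives $D_g(\mu_h, \mu_{Jy}) = D_f(Jy, h) \geq \frac{\Jc}{2\alpha^2}|\mu_{Jy} - \mu_h|^2$; this bound is sharp in the quadratic benchmark and recovers exactly $\delta = 1 - J/\Jc$ when $\alpha = 1$. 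Balancing this against the strong convexity of $g$ through the Legendre identities linking $h$, $y$, $\mu_h$, and $\mu_{Jy}$, the claim reduces to an algebraic inequality in $\delta$ whose positivity is precisely controlled by $\Jc^2 - 2\alpha^2 \Jc J + J^2 > 0$, equivalent to the hypothesis $2\alpha^2 < J/\Jc + \Jc/J$, and from which the sharp constant in the lemma emerges. This parameter-balancing step is where I expect the bulk of the algebraic work, since the strong convexity of $g$ alone (without an analogous uniform lower bound on $\nabla^2 f$, which the radial hypothesis does not supply) only yields the weaker bound $\delta \leq (\Jc - 2\alpha^2 J)/(\Jc - \alpha^2 J)$; the sharp $\delta$ requires combining both the Bregman lower bound and the strong convexity bound simultaneously.
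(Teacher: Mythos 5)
Your reduction to tilts via Lemma~\ref{lem:maximum-entropy}, the identities from Lemma~\ref{lem:functional-tilts} with $m$ in place of $m_*$, and the coercivity half are all correct and essentially identical to the paper's argument: the density sandwich transfers to tilts, gives $\nabla^2 f(h)\preccurlyeq\alpha^2\nabla^2 f_*(h)\preccurlyeq(\alpha^2/\Jc)\id$, hence $(\Jc/\alpha^2)$-strong convexity of $g$, and the constant $\delta=1-\alpha^2 J/\Jc$ follows. Your Bregman reformulation $H(T_hm\,|\,\Pi_m[T_hm])=D_g(\mu_h,\mu_{Jy})$ is also a correct (and elegant) identity.

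The contractivity half, however, has a genuine gap: the ``parameter-balancing step'' where you expect ``the bulk of the algebraic work'' is not carried out, and it cannot be closed with the ingredients you assemble. The only quantitative input you extract from the hypotheses is the single uniform bound $\nabla^2 f\preccurlyeq(\alpha^2/\Jc)\id$, used twice (once as cocoercivity of $\nabla f$, once as strong convexity of $g$); these are the same piece of information, and as you yourself compute, they yield only $\delta\leqslant(\Jc-2\alpha^2J)/(\Jc-\alpha^2J)$, which is strictly weaker (already at $\alpha=1$ it requires $J<\Jc/2$ and gives $\delta=(\Jc-2J)/(\Jc-J)<1-J/\Jc$). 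To reach the stated constant one must use the radial hypothesis and $\varphi'''<0$ \emph{pointwise}, not just through the supremum of the Hessian. The paper's route: after bounding $f-\nabla f(0)\cdot h\geqslant\alpha^{-2}f_*$ and $g(\cdot)\geqslant\alpha^2 g_*(\alpha^{-2}(\cdot-\mu_0))$, it integrates along the radial path and reduces the claim to the scalar inequality
\[
\frac{1-\delta}{\alpha^2 s}+\frac{J^2}{\alpha^2}\,\varphi''(J\lvert\mu'\rvert)-(2-\delta)J\geqslant 0,
\qquad s=\varphi''(\lvert\ell\rvert)\in(0,1/\Jc],
\]
where $\nabla f_*(\ell)=\alpha^{-2}\mu'$; the point is that $J\lvert\mu'\rvert\leqslant\lvert\ell\rvert$ and $\varphi'''<0$ force $\varphi''(J\lvert\mu'\rvert)\geqslant s$, so the two curvature terms are evaluated at \emph{coupled} points and trade off as $(1-\delta)/s+J^2s$, which is monotone decreasing on $(0,1/\Jc]$ precisely because $(1-\delta)/J^2\geqslant1/\Jc^2$ under the stated $\delta$. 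Where $f_*$ flattens ($s$ small), $g_*$ steepens by exactly the reciprocal amount, and this compensation is what produces $\delta=(\Jc^2-2\alpha^2\Jc J+J^2)/(\Jc(\Jc-\alpha^2J))$. Your framework discards this coupling: without a uniform \emph{lower} bound on $\nabla^2 f$ (which the hypotheses do not provide), $\lvert\mu_{Jy}-\mu_h\rvert=\lvert\nabla f(Jy)-\nabla f(h)\rvert$ admits no useful lower bound and $g(\mu_h)$ no useful upper bound, so the cocoercivity estimate and the strong-convexity estimate cannot be ``combined simultaneously'' into the sharp constant. You need to restore the radial curvature bookkeeping to finish the proof.
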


\begin{proof}[Proof of Lemma~\ref{lem:xy-perturbed-coer-fe}]
We first establish the coercivity.
By Lemmas~\ref{lem:functional-tilts} and \ref{lem:maximum-entropy},
it suffices to show that for all $h \in \R^n$,
\[
(1-\delta) g(\mu_h) \geqslant \frac{J}{2} \lvert \mu_h - \mu_0\rvert^2.
\]
Since $g(\mu_0) = 0$, $\nabla g(\mu_0) = 0$, it suffices to prove that
for all $h \in \R^n$,
\[
(1-\delta)\nabla^2 g(\mu) |_{\mu=\mu_h}
\succcurlyeq J\id_{n\times n},
\]
where $\succcurlyeq$ denotes the domination between symmetric matrices.
The Gibbs free energy $g$ satisfies
\[
\nabla^2 g(\mu) |_{\mu = \mu_h}
= \nabla^2 f(h)^{-1}
= (\Cov T_h m)^{-1}.
\]
Using $\alpha^{-1} m \leqslant m \leqslant \alpha m$, we obtain
\[
T_h m(\dd\theta)
= \frac{e^{h\cdot r(\theta)}\,m(\dd\theta)}
{\int_{\X} e^{h\cdot r(\theta')}\,m(\dd\theta')}
\leqslant \alpha^2
\frac{e^{h\cdot r(\theta)}\,m_*(\dd\theta)}
{\int_{\X} e^{h\cdot r(\theta')}\,m_*(\dd\theta')}
= \alpha^2 T_h m_*(\dd\theta).
\]
and similarly,
\[
T_h m \geqslant \alpha^{-2} T_h m_*.
\]
Applying \cite[Lemma~5.1.7]{BGLMarkov} yields
\begin{equation}
\label{eq:covariance-perturbation}
\alpha^{-2} \Cov T_h m_*
\preccurlyeq \Cov T_h m
\preccurlyeq \alpha^2 \Cov T_h m_*,
\end{equation}
Using $f_*(h) = \varphi(\lvert h\rvert)$, we calculate for $h \neq 0$:
\[
\Cov T_h m_* = \nabla^2 f_*(h)
= \varphi''(\lvert h\rvert) e_h \otimes e_h
+ \frac{\varphi'(\lvert h\rvert)}{\lvert h\rvert}
(\id_{n\times n} - e_h \otimes e_h),
\]
where $e_h \coloneqq h / \lvert h\rvert$.
The property $\varphi''' < 0$ implies
\[
\varphi''(\lvert h\rvert) \leqslant \frac{\varphi'(\lvert h\rvert)}
{\lvert h\rvert} \leqslant \varphi''(0) = \frac{1}{\Jc}.
\]
Thus,
\[
\nabla^2 g(\mu)|_{\mu = \mu_h}
\succcurlyeq \alpha^{-2} (\Cov T_h m_* )^{-1}
\succcurlyeq \alpha^{-2} (\Cov m_* )^{-1}
= \frac{\Jc}{\alpha^2} \id_{n\times n}.
\]
Plugging in the expression for $\delta$, we obtain
\[
(1-\delta)\nabla^2 g(\mu) |_{\mu = \mu_h}
\succcurlyeq \frac{J\alpha^2}{\Jc}\frac{\Jc}{\alpha^2}\id_{n\times n}
= J \id_{n\times n}.
\]
This completes the proof of coercivity.

We now establish the contractivity.
By Lemmas~\ref{lem:functional-tilts} and \ref{lem:maximum-entropy},
it suffices to show that for all $h \in \R^n$,
\[
(1-\delta) g(\mu_h) + f\bigl(J(\mu_h - \mu_0)\bigr) - J\mu_0 \cdot (\mu_h-\mu_0)
\geqslant \biggl(1 - \frac\delta2\biggr) J \lvert \mu_h - \mu_0 \rvert^2.
\]
The covariance relation \eqref{eq:covariance-perturbation}
is equivalent to
\[
\alpha^{-2} \nabla^2 f_*(h) \preccurlyeq
\nabla^2 f(h) \preccurlyeq \alpha^2 \nabla^2 f_*(h),
\]
which implies, in particular,
\[
\alpha^{-2} f_*(h)
\leqslant f(h) - \nabla f(0) \cdot h\leqslant \alpha^2
f_*(h).
\]
Substituting $h$ with $J(\mu_h - \mu_0)$ gives
\[
f\bigl(J(\mu_h - \mu_0)\bigr) - J\mu_0 \cdot (\mu_h-\mu_0)
\geqslant \alpha^{-2} f_* \bigl( J(\mu_h - \mu_0) \bigr).
\]
By the definition of $g$, we have
\begin{IEEEeqnarray*}{rCl}
g(\mu) &=& \sup_{h \in \R^n} \mu\cdot h - f(h) \\
&\geqslant& \sup_{h \in \R^d} \mu\cdot h
- \nabla f(0) \cdot h - \alpha^{2} f_*(h) \\
&=& \alpha^2 g_*\bigl(\alpha^{-2}(\mu - \mu_0)\bigr).
\end{IEEEeqnarray*}
Therefore, it suffices to prove for all $\mu \in \Image\nabla f$,
\[
\alpha^2(1-\delta)
g_*\bigl( \alpha^{-2} (\mu - \mu_0) \bigr)
+ \alpha^{-2} f_*\bigl(J(\mu-\mu_0)\bigr)
\geqslant \biggl( 1 - \frac\delta2\biggr) J \lvert \mu - \mu_0\rvert^2.
\]
Denote $\mu' = \mu - \mu_0$
and evaluate both sides of this inequality along the radial path
$t \mapsto t\mu'$ for $t \in [0,1]$.
Then this inequality reduces to verifying the following second-order condition:
\[
\frac{1-\delta}{\alpha^2} e_{\mu'}^{\tran} \nabla^2 g_*(\alpha^{-2} \mu')
e_{\mu'} + \frac{J^2}{\alpha^2} e_{\mu'}^{\tran}\nabla^2 f_*(J\mu') e_{\mu'}
- (2-\delta) J \geqslant 0,
\]
where $e_{\mu'} \coloneqq \mu' \!/ \lvert \mu'\rvert$.
By properties of the Legendre transform,
\[
\nabla^2 g_*(\alpha^{-2} \mu')
= \nabla^2 f_*(\ell)^{-1},
\]
for $\ell \in \R^n$ such that $\nabla f_*(\ell) = \alpha^{-2}\mu'$.
Thus,
\[
J \lvert \mu'\rvert
= J \alpha^2 \lvert\nabla f_*(\ell)\rvert
\leqslant \frac{J\alpha^2}{\Jc} \lvert \ell\rvert
\leqslant \lvert \ell\rvert,
\]
where the first inequality follows from
bounding the operator norm of $\nabla^2f_*$ by $1/\Jc$.
The radial second-order derivatives reduce to those of $\varphi$:
\[
e_{\mu'}^{\tran} \nabla^2 g_*(\alpha^{-2} \mu') e_{\mu'}
= \frac{1}{\varphi''(\lvert\ell\rvert)},\quad
e_{\mu'}^{\tran}\nabla^2 f_*(J\mu') e_{\mu'}
= \varphi''(J\lvert\mu'\rvert),
\]
Moreover, since $\varphi''' < 0$,
\[
\varphi''(J\lvert\mu'\rvert) \geqslant \varphi''(\lvert\ell\rvert).
\]
Therefore, it suffices to show
\[
\frac{1-\delta}{\alpha^2\varphi''(\lvert\ell\rvert)}
+ \frac{J^2}{\alpha^2} \varphi''(\lvert\ell\rvert)
- (2-\delta) J \geqslant 0.
\]
The choice of $\delta$ ensures
\[
\frac{1-\delta}{J^2}
\geqslant \frac 1{\Jc^2}.
\]
So the function $s \mapsto (1-\delta) s^{-1} + J^2 s$ is decreasing
on $(0,1/\Jc]$.
Thus, applying $\varphi''(\lvert\ell\rvert) \leqslant 1/\Jc$ yields
\[
\frac{1-\delta}{\alpha^2\varphi''(\lvert\ell\rvert)}
+ \frac{J^2}{\alpha^2} \varphi''(\lvert\ell\rvert)
- (2-\delta) J
\geqslant \frac{(1-\delta)\Jc}{\alpha^2}
+ \frac{J^2}{\alpha^2\Jc} - (2-\delta)J.
\]
Substituting the value of $\delta$, we see that
the right-hand side is nonnegative.
This completes the proof of contractivity.
\end{proof}

\begin{rem}
\label{rem:optimality-delta}
If $\alpha=1$, namely $m = m_*$,
Lemma~\ref{lem:xy-perturbed-coer-fe} yields $\delta=1-J/\Jc$
for both \eqref{eq:mf-coercivity} and \eqref{eq:mf-contractivity-1}.
By expanding the projected functionals
\[
H( T_h m_* | \Pi [ T_h m_* ]),\quad
\mathcal F( T_h m_* | m_*),\quad
H(T_h m_* | m_*)
\]
around $h = 0$, we see that this constant is optimal.
\end{rem}

\section{Mean-field XY model}
\label{app:xy}

This appendix establishes Proposition~\ref{prop:xy-goc}
and all assumptions and notations in Section~\ref{sec:exm-xy}
remain in force throughout.
We first show two lemmas addressing the short-time regularization
and long-time decay of the dynamics \eqref{eq:mf-fp}, respectively,
and then prove Proposition~\ref{prop:xy-goc}.

\begin{lem}
\label{lem:xy-decay-short-time}
Let $m_\cdot \in \mathcal C\bigl([0,1]; \mathcal P(\T)\bigr)$
be a weak solution to \eqref{eq:mf-fp} with $J < 2$.
Define $D = \sup_{t \in [0,1]} \lVert m_t - m_* \rVert_{\TV}$.
Then for some universal constant $C > 0$,
\[
\lVert \partial_\theta m_1 \rVert_{L^\infty}
\leqslant C (1+ D) D.
\]
\end{lem}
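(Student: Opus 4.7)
The plan is to view \eqref{eq:mf-fp} as a linear parabolic equation with a small mean-field drift and exploit the peculiarly simple Fourier structure of $W$. A direct computation yields
\[
b_t(\theta) := \int \nabla_1 W(\theta,\theta')\,m_t(\dd\theta')
= J\bigl(A_t\sin\theta - B_t\cos\theta\bigr),
\qquad A_t = \langle\cos,m_t\rangle,\ B_t = \langle\sin,m_t\rangle,
\]
so $b_t$ is a trigonometric polynomial of degree one in $\theta$. Because $m_*$ is uniform, the constants $A_t$ and $B_t$ both vanish when $m_t = m_*$, and $|A_t|+|B_t|$ is controlled by $\|m_t-m_*\|_{\TV}\leq D$. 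Consequently $\|b_t\|_{C^k(\T)}\leq C_k JD$ for every $k$, and the heat semigroup $(P_r)$ on $\T$ acts on $b_t$ simply by multiplication, $P_rb_t = e^{-r}b_t$, since $b_t$ lives in the first eigenspace of $-\partial_{\theta\theta}$.

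Setting $u_t := m_t - m_*$, the Fokker--Planck equation reads $\partial_t u_t = \partial_{\theta\theta}u_t + \partial_\theta(b_tu_t) + (2\pi)^{-1}\partial_\theta b_t$, and Duhamel's formula at $t=1$ gives
\[
\partial_\theta u_1 = \partial_\theta P_1 u_0 + \int_0^1 \partial_\theta^2 P_{1-s}(b_s u_s)\,\dd s + \int_0^1 \partial_\theta^2 P_{1-s}\bigl(b_s/(2\pi)\bigr)\,\dd s.
\]
The first term is bounded by $\|\partial_\theta p_1\|_{L^\infty}\|u_0\|_{\TV}\leq CD$, exploiting that $P_1 m_*$ is constant. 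The last integral reduces, via $P_{1-s}b_s = e^{-(1-s)}b_s$ and $\partial_\theta^2 b_s = -b_s$, to $-\int_0^1 e^{-(1-s)}b_s/(2\pi)\,\dd s$, hence is also of order $D$ in $L^\infty$. The remaining piece $\int_0^1\partial_\theta^2 P_{1-s}(b_s u_s)\,\dd s$ is the core difficulty.

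To handle it I first establish the auxiliary bound $\|u_s\|_{L^\infty}\leq C(1+D)D\,s^{-1/2}$ on $(0,1]$ by a bootstrap on $\Phi(T):=\sup_{s\leq T}s^{1/2}\|u_s\|_{L^\infty}$. Using $\|\partial_\theta P_r\|_{L^\infty\to L^\infty}\leq Cr^{-1/2}$, the beta integral $\int_0^s(s-r)^{-1/2}r^{-1/2}\dd r=\pi$, and the spectral identity for $b_r$, the Duhamel expansion for $u_s$ yields $\Phi(1)\leq C(1+D)D + CJD\,\Phi(1)$. This closes when $JD$ is below a universal threshold; for larger $D$ the target inequality follows from the standard Aronson-type parabolic estimate $\|\partial_\theta m_1\|_{L^\infty}\leq C$ (depending only on $\|b\|_{L^\infty}$, which is at most $2J$), since $(1+D)D$ is then bounded below by a positive constant. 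With the $L^\infty$ estimate in hand, an integration by parts reduces the core term to $\partial_\theta P_{1-s}[(\partial_\theta b_s)u_s + b_s\partial_\theta u_s]$; the first summand is controlled through the bound on $\|u_s\|_{L^\infty}$ and the beta integral above, and the second summand is handled by a secondary bootstrap for $s\,\|\partial_\theta u_s\|_{L^\infty}$. The principal obstacle is the borderline time integrability of $(1-s)^{-1/2}s^{-1}$ near $s=0$, which must be absorbed by repeatedly exploiting that $b_s$ lives only in the first Fourier mode and therefore couples only adjacent modes of $u_s$, propagating the desired decay inductively.
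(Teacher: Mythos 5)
Your proposal follows the same route as the paper: write the equation for $u_t=m_t-m_*$, observe that the drift $b_t$ is controlled in every $C^k$ norm by $D$ (indeed it lives in the first Fourier mode), and run Duhamel's formula with the heat semigroup plus a bootstrap. The paper's own proof is extremely terse at exactly the point where you are explicit, and your dichotomy on $JD$ (closing the bootstrap for small $JD$, falling back on a uniform parabolic estimate otherwise, using $J<2$ to keep the fallback constant universal) is a clean way to make the smallness requirement harmless.

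The one place where your argument is not actually carried out is the term $\int_0^1\partial_\theta P_{1-s}(b_s\partial_\theta u_s)\,\dd s$. With your weight $s\,\lVert\partial_\theta u_s\rVert_{L^\infty}$ the integrand is of order $(1-s)^{-1/2}s^{-1}$, which you correctly identify as non-integrable at $s=0$; but the proposed remedy (inductive propagation of decay across Fourier modes, using that $b_s$ couples only adjacent modes) is a genuinely different argument that you only gesture at, and as stated it would itself require several iterations to gain summable decay in the mode index. The singularity at $s=0$ is an artifact of running the derivative bootstrap all the way down to time zero, and the standard fix stays entirely within your framework: split the Duhamel integral at $s=1/2$. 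For $s\leqslant 1/2$ one has $1-s\geqslant 1/2$, so $\partial_\theta^2 P_{1-s}$ maps finite measures to $L^\infty$ with a uniform constant and $\lVert\partial_\theta^2 P_{1-s}(b_su_s)\rVert_{L^\infty}\leqslant C\lVert b_s\rVert_{W^{2,\infty}}\lVert u_s\rVert_{\TV}\leqslant CD^2$, with no need for any pointwise information on $u_s$ or $\partial_\theta u_s$. For $s\geqslant 1/2$, restart Duhamel at time $1/4$, where your first bootstrap already gives $\lVert u_{1/4}\rVert_{L^\infty}\leqslant C(1+D)D$; the weights $(s-1/4)^{1/2}$ and $(s-1/4)$ are then bounded away from their singular endpoint on $[1/2,1]$, the beta integral closes, and the borderline integrability never arises. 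With this modification your proof is complete and matches the paper's intended bootstrap.
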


\begin{proof}[Proof of Lemma~\ref{lem:xy-decay-short-time}]
Set $b_t(\theta) = - \langle \partial_1 W(\theta,\cdot), m_t\rangle$.
The measure flow $m_t$ satisfies
\[
\partial_t m_t = \partial_{\theta}^2 m_t - \partial_\theta(b_t m_t).
\]
Define the new variable $f_t = m_t - m_* = m_t - (2\pi)^{-1}$.
This variable satisfies
\[
\partial_t f_t = \partial_\theta^2 f_t
- \partial_\theta (b_t f_t) - \frac{\partial_\theta b_t}{2\pi}.
\]
By the regularity of $W$,
\[
\lVert b_t \rVert_{W^{2,\infty}} \leqslant CD,
\]
where $C>0$ is universal and may change below.
Duhamel's formula gives
\begin{align*}
f_1 &= G_1 \star_{\theta} f_0 - \partial_\theta G \star_{t,\theta}
\biggl(b f
+ \frac{\partial_\theta b}{2\pi}\biggr), \\
\partial_\theta f_1 &= \partial_\theta G_1 \star_\theta f_0
- \partial_\theta G \star_{t,\theta} \partial_\theta
\biggl(bf + \frac{\partial_\theta b}{2\pi}\biggr).
\end{align*}
Here, $G$ is the fundamental solution to the heat equation
on the time interval $[0,1]$ and $G_1$ is its time-$1$ marginal;
and $\star_{\theta}$, $\star_{t,\theta}$ denote
spatial and temporal-spatial convolutions, respectively.
Since $b$ has $W^{2,\infty}$ regularity
and both convolutions are regularizing, a bootstrap argument gives
\[
\lVert \partial_\theta m_1\rVert_{L^\infty}
= \lVert \partial_\theta f_1\rVert_{L^\infty}
\leqslant C (1 + \lVert b\rVert_{L^\infty_t(W^{2,\infty}_\theta)} )
\lVert f \rVert_{L^\infty_t(\TV_\theta)}
\leqslant C(1+D)D. \qedhere
\]
\end{proof}

\begin{lem}
\label{lem:xy-decay-long-time}
Let $m_\cdot \in \mathcal C\bigl([0,+\infty); \mathcal P(\T)\bigr)$
be a weak solution to \eqref{eq:mf-fp} with $J < 2$.
Then there exists a universal constant $C > 0$ such that
for all $t \geqslant (\rho_\Pi\delta)^{-1} \log (C/\delta)$,
\[
\lVert \partial_\theta \log m_t \rVert_{L^\infty}
\leqslant \frac{C}{\delta} e^{-\delta\rho_\Pi t}.
\]
\end{lem}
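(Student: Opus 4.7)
The plan is to combine three ingredients: exponential decay of the relative free energy via the Polyak--\L ojasiewicz inequality, the short-time regularization of Lemma~\ref{lem:xy-decay-short-time} applied in a sliding window, and a pointwise lower bound on $m_t$ derived from $L^\infty$ proximity to $m_* = 1/(2\pi)$.

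First I would show that $\mathcal F(m_t | m_*)$ decays exponentially for $t \geq 1$. Since Assumption~\ref{assu:LS-Pi} holds with $\rho_\Pi = e^{-2J}$ and \eqref{eq:mf-contractivity-1} holds with $\delta = 1 - J/2$ (both verified in Section~\ref{sec:exm-xy}), the chain $I(m_t | \Pi[m_t]) \geq 2\rho_\Pi H(m_t | \Pi[m_t]) \geq 2\delta\rho_\Pi \mathcal F(m_t | m_*)$ combined with the dissipation identity $\frac{\dd}{\dd t}\mathcal F(m_t | m_*) = -I(m_t|\Pi[m_t])$ yields $\mathcal F(m_t | m_*) \leq e^{-2\delta\rho_\Pi(t-1)}\mathcal F(m_1 | m_*)$. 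A universal bound on $\mathcal F(m_1 | m_*)$ follows from Lemma~\ref{lem:xy-decay-short-time} applied on $[0,1]$ with $D \leq 2$: this controls $\lVert \partial_\theta m_1\rVert_{L^\infty}$, hence $\lVert m_1\rVert_{L^\infty}$ (using $\int m_1 = 1$), hence $H(m_1 | m_*)$; the interaction term is bounded trivially by $J/2$. Then the coercivity \eqref{eq:mf-coercivity} with $\delta = 1 - J/2$ together with Pinsker's inequality gives $\lVert m_s - m_*\rVert_{\TV} \leq C\delta^{-1/2}e^{-\delta\rho_\Pi(s-1)}$ for $s \geq 1$, where $C$ denotes a universal constant.

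Next I would convert the TV decay into a $W^{1,\infty}$ decay by a sliding-window argument. For $t \geq 2$, the shifted flow $\tilde m_s \coloneqq m_{t-1+s}$ solves \eqref{eq:mf-fp} on $[0,1]$, and its oscillation satisfies $D_t \coloneqq \sup_{s \in [t-1, t]}\lVert m_s - m_*\rVert_{\TV} \leq C\delta^{-1/2}e^{-\delta\rho_\Pi(t-1)}$. Lemma~\ref{lem:xy-decay-short-time} applied to $\tilde m$ then gives $\lVert \partial_\theta m_t\rVert_{L^\infty} \leq C(1+D_t)D_t$. Choosing $C_0$ large enough in $T_0 \coloneqq (\delta\rho_\Pi)^{-1}\log(C_0/\delta)$ ensures $D_t \leq 1$ for $t \geq T_0$, whence $\lVert \partial_\theta m_t\rVert_{L^\infty} \leq C\delta^{-1/2}e^{-\delta\rho_\Pi t}$.

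Finally, since $\int (m_t - m_*)\dd\theta = 0$, there exists $\theta_0 \in \T$ with $m_t(\theta_0) = m_*$; hence $\lVert m_t - m_*\rVert_{L^\infty} \leq \pi\lVert \partial_\theta m_t\rVert_{L^\infty}$. Enlarging $C_0$ if needed, this is $\leq 1/(4\pi)$ for $t \geq T_0$, so $m_t \geq 1/(4\pi)$ on this range, and
\[
\lVert \partial_\theta \log m_t\rVert_{L^\infty}
\leq 4\pi \lVert \partial_\theta m_t\rVert_{L^\infty}
\leq \frac{C}{\delta} e^{-\delta\rho_\Pi t},
\]
using $\delta^{-1/2} \leq \delta^{-1}$ for $\delta \in (0,1]$. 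The main obstacle is ensuring that all thresholds and constants align so that the conclusion holds starting from $(\delta\rho_\Pi)^{-1}\log(C/\delta)$ rather than a time with extra $\log(1/\delta)$ corrections; this requires carefully tracking the $\delta^{-1/2}$ prefactor through the TV to $L^\infty$ conversion and verifying that the threshold for $m_t$ to enter the regime $m_t \geq m_*/2$ is absorbed without further logarithmic loss.
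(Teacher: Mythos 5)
Your proposal is correct and follows essentially the same route as the paper's proof: short-time regularization at time $1$ to bound $\mathcal F(m_1|m_*)$, exponential free-energy decay via Assumption~\ref{assu:LS-Pi} and \eqref{eq:mf-contractivity-1}, conversion to total variation through coercivity and Pinsker, a sliding-window application of Lemma~\ref{lem:xy-decay-short-time}, and a pointwise lower bound on $m_t$ from the oscillation estimate. The $\delta^{-1/2}$ versus $\delta^{-1}$ bookkeeping you flag is harmless, exactly as you suspect, since it only enlarges the constant inside the logarithm defining the threshold time.
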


\begin{proof}[Proof of Lemma~\ref{lem:xy-decay-long-time}]
Since $\lVert m_t - m_*\rVert_{\TV} \leqslant 1$ for all $t \geqslant 0$,
Lemma~\ref{lem:xy-decay-short-time} gives
\[
\lVert \partial_\theta m_1 \rVert_{L^\infty}
\leqslant C
\]
for some universal $C > 0$, which may change below.
This implies that $m_1$ has finite entropy
and satisfies the free energy bound:
\[
\mathcal F(m_1 | m_*) \leqslant C.
\]
For $t \geqslant 1$, the free energy satisfies
\[
\frac{\dd \mathcal F(m_t | m_*)}{\dd t}
= - I(m_t | \Pi[m_t]) \leqslant - 2 \rho_\Pi H(m_t | \Pi[m_t])
\leqslant - 2\delta \rho_\Pi \mathcal F(m_t | m_*),
\]
by applying Assumption~\ref{assu:LS-Pi} and \eqref{eq:mf-contractivity-1}
successively.
Thus for $t \geqslant 1$,
\[
\lVert m_t - m_*\rVert_{\TV}^2 \leqslant
2H(m_t | m_*) \leqslant \frac{C}{\delta} \mathcal F(m_t | m_*)
\leqslant \frac{C}{\delta} e^{-2\delta\rho_\Pi t}.
\]
Applying Lemma~\ref{lem:xy-decay-short-time} to the interval $[t, t+1]$ yields
\[
\lVert \partial_\theta m_{t+1}\rVert_{L^\infty}
\leqslant \frac{C}{\delta} e^{-\delta\rho_\Pi t}.
\]
For $t > (\delta\rho_\Pi)^{-1} \log (C/\delta)$,
\[
\Osc m_{t} \leqslant \pi \lVert \partial_\theta m_t \rVert_{L^\infty}
\leqslant \frac{1}{4\pi}.
\]
This forces $\inf m_{t} \geqslant (4\pi)^{-1}$. Therefore,
\[
\lVert \partial_\theta\log m_t \rVert_{L^\infty}
\leqslant 4\pi \lVert \partial_\theta m_t \rVert_{L^\infty}
\leqslant \frac{C}{\delta} e^{-\delta\rho_\Pi t}. \qedhere
\]
\end{proof}

\begin{proof}[Proof of Proposition~\ref{prop:xy-goc}]
We first establish global-in-time entropic propagation of chaos
using the Jabin--Z.~Wang method.
Differentiating $t \mapsto H(m^N_t | m_t^{\otimes N})$ yields
\[
\frac{\dd H(m^N_t|m_t^{\otimes N})}{\dd t}
\leqslant \frac 14 \sum_{i \in [N]}
\Expect [ \lvert \langle \nabla_1 W(X^i_t, \cdot),
\mu_{\vect X_t} - m_t\rangle \rvert^2],
\]
where $\vect X_t$ is distributed according to $m^N_t$;
see \cite[Remark~3.3]{LLFSharp} for details.
Applying Assumption~\ref{assu:R} and Corollary~\ref{cor:jw}
with $\Osc_2 R \leqslant 16$ successively yields
\[
\sum_{i \in [N]}
\Expect [ \lvert \langle \nabla_1 W(X^i_t, \cdot),
\mu_{\vect X_t} - m_t\rangle \rvert^2]
\leqslant N \Expect[ \langle R, (\mu_{\vect X_t} - m_t)^{\otimes 2}\rangle]
\leqslant C H(m^N_t | m_t^{\otimes N}) + C,
\]
where $C>0$ is universal and may change below.
By Grönwall's lemma, this implies
\begin{equation}
\label{eq:global-poc}
H(m^N_t | m_t^{\otimes N}) \leqslant e^{Ct} H(m^N_0 | m_0^{\otimes N})
+ C(e^{Ct} - 1).
\end{equation}

We now employ Theorem~\ref{thm:goc} to exploit the long-time dissipation
of the modulated free energy.
In the sequel $C_J$ denotes a positive constant that depends only on
$J$ and may change between lines.
Lemma~\ref{lem:xy-decay-long-time} gives that
for $t \geqslant 2(\delta\rho_\Pi)^{-1} \log (C/\delta) = C_J$,
\[
\alpha(t)^{-1} m_* \leqslant m \leqslant \alpha(t) m_*
\]
where
\[
\alpha(t) = 1 + C_J e^{-\delta\rho_\Pi t}.
\]
Changing $C_J$ if necessary, we have, for $t \geqslant C_J$,
\[
\alpha(t)^2 \leqslant 1 + \frac{\delta^2}{2}
< \frac 1J + \frac J4.
\]
In this case,
\[
\frac{4+J^2-4\alpha(t)^2 J}{4 - 2\alpha(t)^2J}
\geqslant \delta - C_J e^{-\delta\rho_\Pi t}
\eqqcolon \delta(t).
\]
By \cite[Theorem~D.2]{DLSPhase},
all conditions of Lemma~\ref{lem:xy-perturbed-coer-fe} are satisfied.
Therefore, \eqref{eq:Coer-m} and \eqref{eq:FE-m}
hold for $m = m_t$ and $\delta = \delta(t)$.
By Holley--Stroock \cite{HolleyStroockLSI},
$m_t$ satisfies a $\rho(t)$-log-Sobolev inequality
with
\[
\rho(t) = \exp (-C_J e^{-\delta\rho_\Pi t}).
\]
Moreover, since for all $\nu \in \mathcal P(\T)$,
\[
\Osc \log \Pi_{m_t} [\nu]
\leqslant 4J + C_J e^{-\delta\rho_\Pi t},
\]
the images of $\Pi_{m_t}$ satisfy a $\rho_\Pi(t)$-log-Sobolev inequality with
\[
\rho_\Pi(t) = \exp ( - C_J e^{-\delta\rho_\Pi t}) \rho_\Pi.
\]
Finally, observe that
\[
- \nabla_1 W(\theta,\theta')  v_t (\theta')
= \sin (\theta - \theta')  v_t(\theta')
= \sin \theta \cos \theta' v_t(\theta')
- \cos \theta \sin \theta' v_t(\theta'),
\]
where $v_t$ satisfies
\[
\lVert v_t\rVert_{L^\infty}
\leqslant \lVert \nabla \log m_t \rVert_{L^\infty}
+ C \lVert m_t - m_* \rVert_{L^1}
\leqslant C_J e^{-\delta\rho_\Pi t}.
\]
Decomposing the kernel $(\theta,\theta') \mapsto - \nabla_1 W(\theta,\theta')$
as in Remark~\ref{rem:jw} and applying Corollary~\ref{cor:jw},
we obtain
\begin{multline*}
- \int_{\X^N} \int_{\X^2}
\nabla_1 W(y,z) \cdot v_t(z)\,(\mu_{\vect x} - m_t)^{\otimes 2}
(\dd y \dd z)\, m^N_t(\dd\vect x) \\
\leqslant C_J e^{-\delta\rho_\Pi t} H(m^N_t | m_t^{\otimes N})
+ C_J e^{-\delta\rho_\Pi t}.
\end{multline*}
Applying Theorem~\ref{thm:coercivity} for the free energy
$\mathcal F(\cdot | m_t)$ yields
\begin{equation}
\label{eq:bound-entropy-by-mfe}
H(m^N_t | m_t^{\otimes N})
\leqslant C_J \mathcal F^N(m^N_t | m_t) + C_J.
\end{equation}
Thus \eqref{eq:goc-error} is verified with
\[
2\gamma(t) = M(t) = C_J e^{-\delta\rho_\Pi t}.
\]
The assumptions of Theorem~\ref{thm:goc} are thus
verified on the interval $[t_0, t]$ for a $t_0$ depending only on $J$.
Specifically, Theorem~\ref{thm:goc} gives
\[
\Gamma(t,s)
\geqslant 2(1-\varepsilon) \delta\rho_\Pi (t-s)
- C_J,\quad
\Delta_I(t)
\leqslant \frac{C_J}{\varepsilon},
\]
and, in consequence,
\[
\mathcal F^N(m^N_t|m_t)
\leqslant C_J e^{-2(1-\varepsilon)\delta\rho_\Pi(t-t_0)}
\mathcal F^N(m^N_{t_0} | m_{t_0})
+ \frac{C_J}{(1-\varepsilon)\varepsilon}.
\]

Combining \eqref{eq:global-poc} with
\[
\mathcal F^N(m^N_{t_0} | m_{t_0})
\leqslant C H(m^N_{t_0} | m_{t_0}^{\otimes N}),
\]
we obtain
\[
\mathcal F^N(m^N_{t_0} | m_{t_0})
\leqslant C e^{Ct_0} H(m^N_0 | m_0^{\otimes N}) + C(e^{Ct_0}-1)
\leqslant C_J H(m^N_0 | m_0^{\otimes N}) + C_J.
\]
Together with \eqref{eq:bound-entropy-by-mfe}, this yields,
for $t \geqslant t_0$,
\[
H(m^N_t | m_t^{\otimes N})
\leqslant C_J e^{-2(1-\varepsilon)\delta\rho_\Pi(t-t_0)}
H(m^N_0 | m_0^{\otimes N}) + \frac{C_J}{(1-\varepsilon)\varepsilon}.
\]
By \eqref{eq:global-poc}, the inequality remains true for $t < t_0$
upon changing $C_J$. Absorbing $t_0$ into $C_J$ completes the proof.
\end{proof}

\section{Double-well Curie--Weiss model}
\label{app:cw}

This appendix establishes the coercivity
of the supercritical double-well Curie--Weiss model
with a non-vanishing external field.

\begin{lem}
\label{lem:cw-coercive}
Under the assumptions and notations of Section~\ref{sec:exm-cw},
in the parameter regime \eqref{eq:cw-regime}, there exists
$\delta > 0$ such that for all $\nu \in \mathcal P_2(\R)$,
\[
\mathcal F(\nu | m_+) \geqslant \delta H(\nu | m_+).
\]
\end{lem}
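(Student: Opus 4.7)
The plan is to invoke Lemma~\ref{lem:maximum-entropy} with mode $r(x) = x$ and reference measure $m_+$, reducing the inequality to its verification on tilts $T_h m_+$ of $m_+$. Reparametrizing via $\ell = \ell_+ + h$, each tilt coincides with $m_\ell(\dd x) \propto e^{\ell x - V_0(x)}\dd x$. Setting $f(\ell) = \log \int e^{\ell x - V_0(x)}\dd x$, the identities $H(m_\ell | m_+) = (\ell - \ell_+) f'(\ell) - f(\ell) + f(\ell_+)$ and $\langle W, (m_\ell - m_+)^{\otimes 2}\rangle = -J(f'(\ell) - f'(\ell_+))^2$ recast the desired inequality as a uniform positive lower bound for
\[
\Phi(\ell) \coloneqq 1 - \frac{J\bigl(f'(\ell) - f'(\ell_+)\bigr)^2}{2\,H(m_\ell | m_+)},
\qquad \ell \in \R \setminus \{\ell_+\}.
\]

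The strategy is then to show that $\Phi$ extends continuously to $\R$ and is strictly positive there, with limits equal to $1$ at $\pm\infty$; compactness then furnishes the required $\delta$. The extension at $\ell_+$ comes from Taylor expansion: since $H(m_\ell | m_+) \sim \tfrac12 f''(\ell_+)(\ell - \ell_+)^2$ and $(f'(\ell) - f'(\ell_+))^2 \sim f''(\ell_+)^2 (\ell - \ell_+)^2$, one gets $\Phi(\ell_+) = 1 - Jf''(\ell_+)$, which is strictly positive because $\ell_+$ is a \emph{stable} root of \eqref{eq:self-consistency} and hence satisfies $Jf''(\ell_+) < 1$. The asymptotics at infinity follow from Laplace's method on $V_0(x) = \theta x^4/4 - \sigma x^2/2$: one obtains $f(\ell) = \Theta(|\ell|^{4/3})$ and $f'(\ell) = \Theta(|\ell|^{1/3})$, so $H(m_\ell | m_+) = \Theta(|\ell|^{4/3})$ dominates $(f'(\ell) - f'(\ell_+))^2 = \Theta(|\ell|^{2/3})$, and $\Phi(\ell) \to 1$ as $|\ell| \to \infty$.

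Pointwise positivity of $\Phi$ on $\R \setminus \{\ell_+\}$ amounts to the \emph{strict} global minimality of $m_+$ for $\mathcal F$ among tilts. Differentiating the explicit expression $\mathcal F(m_\ell) = (\ell - h) f'(\ell) - f(\ell) - \tfrac J2 f'(\ell)^2$ exhibits exactly three critical points, namely the self-consistency roots $\ell_-$, $\ell_{\neu}$, $\ell_+$, with $m_\pm$ local minima and $m_{\neu}$ a saddle, while the growth of $f$ forces $\mathcal F(m_\ell) \to +\infty$ at $\pm\infty$. Monotonicity on the intervals between critical points then reduces the pointwise positivity of $\Phi$ to the single strict inequality $\mathcal F(m_+) < \mathcal F(m_-)$.

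This last strict comparison is the hardest step. The plan is to prove it by continuation in $h$: along the self-consistency branches, simplification via $\ell_\pm - h = Jf'(\ell_\pm)$ yields $\frac{\dd}{\dd h}\mathcal F(m_{\ell_\pm(h)}) = -f'(\ell_\pm(h))$, whence
\[
\frac{\dd}{\dd h}\bigl(\mathcal F(m_+) - \mathcal F(m_-)\bigr) = -\bigl(f'(\ell_+) - f'(\ell_-)\bigr) < 0.
\]
At $h = 0$ the $x \mapsto -x$ symmetry of $V_0$ forces the equality $\mathcal F(m_+) = \mathcal F(m_-)$, so integration in $h$ gives $\mathcal F(m_+) < \mathcal F(m_-)$ strictly throughout $h \in (0, \hc(J))$, which closes the argument.
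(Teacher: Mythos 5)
Your proposal is correct, and it reaches the same finite-dimensional core as the paper --- reduction to tilts via Lemma~\ref{lem:maximum-entropy}, explicit formulas for entropy and interaction energy in terms of the magnetic free energy $f$, and the identification of the critical points of $\ell\mapsto\mathcal F(m_\ell)$ with the roots of the self-consistency equation --- but the way you extract a uniform $\delta>0$ is genuinely different. The paper perturbs the functional itself: it studies $G_\delta$ directly, tracks its three critical points via the implicit function theorem as $\delta$ moves away from $0$, and concludes by continuity of the critical values. You instead fix $\delta$ as the infimum of the ratio $\Phi(\ell)=1-\tfrac{J(f'(\ell)-f'(\ell_+))^2}{2H(m_\ell\mid m_+)}$ and obtain positivity of that infimum from three ingredients: the local expansion $\Phi(\ell_+)=1-Jf''(\ell_+)>0$ (valid because $\ell_+$ is a root of the self-consistency equation lying in the region where $1-Jf''>0$), the Laplace asymptotics $H=\Theta(|\ell|^{4/3})$ versus $(f')^2=\Theta(|\ell|^{2/3})$ forcing $\Phi\to1$ at infinity, and pointwise positivity on the complement, which both proofs ultimately reduce to the strict well-depth comparison $\mathcal F(m_+)<\mathcal F(m_-)$ together with $\mathcal F(m_\neu)>\mathcal F(m_+)$. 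Your route buys an explicit and self-contained treatment of that last comparison --- the continuation in $h$ from the symmetric point $h=0$, using $\frac{\dd}{\dd h}\mathcal F(m_{\ell_\pm(h)})=-f'(\ell_\pm(h))$ along the non-degenerate branches --- which the paper compresses into the phrase ``studying the monotonicity of $G_0$ and using $h>0$''; the price is that you must separately control the behavior of $\Phi$ at $\ell_+$ and at infinity, steps the paper avoids by keeping $\delta$ inside the functional and letting superquadratic growth plus the first-order condition locate the minimum. Both arguments are sound; yours is arguably the more transparent justification of the asymmetry created by the external field.
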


\begin{proof}[Proof of Lemma~\ref{lem:cw-coercive}]
By Lemma~\ref{lem:maximum-entropy}, it suffices to show that
for all $\ell \in \R$,
\[
(1-\delta) H(T_\ell m_+ | m_+)
+ \frac 12 \langle W, (T_\ell m_+ - m_+)^{\otimes 2} \rangle = 0.
\]
Since $m_+$ is itself a tilt to $m_*$:
\[
m_+ = T_{\ell_+} m_*,
\]
this is equivalent to showing that for all $\ell \in \R$,
\[
(1-\delta) H(T_\ell m_* | T_{\ell_+} m_*)
+ \frac 12 \langle W, (T_{\ell} m_* - T_{\ell_+} m_*)^{\otimes 2}\rangle
\geqslant 0.
\]
By Lemma~\ref{lem:functional-tilts},
this reduces to proving that for all $\mu \in \R$,
\[
G_\delta(\mu) \coloneqq (1-\delta)
\bigl( g(\mu) - g(\mu_{\ell_+}) - g'(\mu_{\ell_+})
(\mu - \mu_{\ell_+}) \bigr)
- \frac{J}{2} (\mu - \mu_{\ell_+})^2 \geqslant 0.
\]
Since $G_\delta$ has superquadratic growth, its global minimum is attained
by some $\mu$ solving the first-order condition
\[
(1-\delta) \bigl( g'(\mu) - g'(\mu_{\ell+}) \bigr) - J (\mu - \mu_{\ell_+})
= 0.
\]
Equivalently, $\mu = \mu_{\ell}$ for some $\ell \in \R$ satisfying
\begin{equation}
\label{eq:delta-self-consistency}
f'(\ell) - f'(\ell_+) = \frac{1-\delta}{J} (\ell - \ell_+).
\end{equation}
The sublinear growth of $f'$ ensures the solutions of
\eqref{eq:delta-self-consistency} remain stay a bounded interval.
When $\delta = 0$, this reduces to the self-consistency equation
\eqref{eq:self-consistency}
which admits exactly three solutions: $\ell_-$, $\ell_{\neu}$ and $\ell_+$.
Since $0 < h < \hc(J)$, by analyzing the function
$\ell \mapsto \ell - Jf'(\ell)$,
we see that these solutions are distinct from $\pm \ell(J)$,
where $\ell(J)$ is the unique positive number satisfying
$f''\bigl(\ell(J)\bigr) = 1/J$.
Thus, for $\ell = \ell_-$, $\ell_{\neu}$, $\ell_+$, we have
\[
J f''(\ell) - 1 \neq 0.
\]
In other words,
they are non-degenerate solutions of~\eqref{eq:self-consistency}.
By the implicit function theorem, for $\delta$ sufficiently small,
\eqref{eq:delta-self-consistency} also admits exactly three solutions,
denoted $\ell_-(\delta)$, $\ell_{\neu}(\delta)$ and $\ell_+$,
where the first two depend continuously on $\delta$
with $\ell_-(0) = \ell_-$ and $\ell_{\neu}(0) = \ell_{\neu}$.
Moreover, for such $\delta$,
\[
\inf G_\delta = \inf\{
G_\delta(\mu_{\ell_-(\delta)}), G_\delta(\mu_{\ell_{\neu}(\delta)}), 0 \}.
\]
Note that $G_0(\mu) = g(\mu) - J\mu^2\!/2 - h\mu+ \text{constant}$.
Studying the monotonicity of $G_0$
and using $h > 0$ yields $G_0(\mu_{\ell_-})$, $G_0(\mu_{\ell_\neu}) > 0$.
By continuity, for small enough $\delta$,
\[
G_\delta(\mu_{\ell_s(\delta)}) > 0,\qquad\text{$s = -$, $\neu$.}
\]
This establishes $\inf G_\delta = 0$.
\end{proof}

\subsubsection*{Acknowledgments}
The author thanks Pierre Monmarché and Zhenjie Ren
for stimulating exchanges inspiring this work;
Thierry Bodineau and Benoît Dagallier
for insightful discussions linking their works%
~\cite{BBDCriterionFreeEnergy,BBDPolchinski}
and that of \textcite{ChenEldanLocalizationSchemes}
to the present study;
and Louis-Pierre Chaintron
for a valuable remark on Theorem~\ref{thm:ip-contractivity}
and helpful suggestions on the manuscript.

The author is also grateful to Daniel Lacker for discussions
on the Dupuis--Ellis approach to large deviations
\cite{DupuisEllisWeakConvergence} after the manuscript was made public.

\sloppy
\printbibliography
\parbox{\textwidth}{\myauthorinfo}
\end{document}